\def\@fnsymbol#1{\ensuremath{\ifcase#1\or 1 \or \ddagger\or
   \mathsection\or \mathparagraph\or \|\or **\or \dagger\dagger
   \or \ddagger\ddagger \else\@ctrerr\fi}}
\newcommand{\cp}{{\sf C}_{\sf p}}
\newcommand{\la}{{\langle}}
\newcommand{\ra}{{\rangle}}
\newcommand{\om}{{\omega}}
\newcommand{\fin}{[\omega]^{<\omega}}
\newcommand{\nnorm}{\left\lVert x\right\rVert}
\newcommand{\nnnorm}{\left\lVert \;\right\rVert}
\newcommand{\maxx}{\text{max}}
\newcommand{\minn}{\text{min}}
\newcommand{\infinitary}
{\mathcal{L}_{\omega_1,\omega}}
\newcommand{\sentl}{\text{Sent}(L)}
\newcommand{\val}{\text{Val}}
\newcommand{\modelm}{\mathfrak{M}}
\newcommand{\modeln}{\mathfrak{N}}
\newcommand{\logic}{\mathfrak{L}}
\newcommand{\integer}{\mathbb{Z}}
\newcommand{\rat}{\mathbb{Q}}
\newcommand{\real}{\mathbb{R}}
\newcommand{\s}{\mathcal{S}}
\newcommand{\oo}{\mathcal{O}}
\newcommand{\e}{\emptyset}
\newcommand{\strl}{\text{Str}(L)}
\newcommand{\strlprime}{\text{Str}(L')}
\newcommand{\modt}{\text{Mod}(T)}
\newcommand{\proof}{\text{\sf Proof.\;}}
\newcommand{\qed}{\hfill$\square$}
\newcommand{\U}{\mathcal{U}}
\newcommand{\V}{\mathcal{V}}
\newcommand{\C}{\mathcal{C}}
\newcommand{\B}{\mathcal{B}}
\newcommand{\ltpfim}{ltp_{\varphi,\mathfrak{M}}}
\newcommand{\rtpfin}{rtp_{\varphi,\mathfrak{N}}}
\newcommand{\slfi}{\mathcal{S}_{\varphi}^{l}}
\newcommand{\srfi}{\mathcal{S}_{\varphi}^{r}}
\newcommand{\lkom}{\mathcal{L}_{\kappa,\omega}}
\newcommand{\lomom}{\mathcal{L}_{\omega,\omega}}
\newcommand{\zfc}{{\sf ZFC}}
\newtheorem{definition}{\sf Definition}[section]
\newtheorem{remark}[definition]{\sf Remark}
\newtheorem{theorem}[definition]{\sf Theorem}
\newtheorem{lemma}[definition]{\sf Lemma}
\newtheorem{proposition}[definition]{\sf Proposition}
\newtheorem{corollary}[definition]{\sf Corollary}
\title{\sf Model Theory For $\cp$-Theorists }
\author{\sf Clovis Hamel\footnotemark[1] \ and Franklin D. Tall\thanks{
Research supported by NSERC grant A-7354.
\newline
2020 Mathematics Subject Classification. 03C45, 03C75, 03C95, 03C98, 54C35, 46A50, 46B99.
\newline
Key words and phrases: Tsirelson’s space, Gowers’ problem, explicitly definable Banach spaces, $\cp$-theory,
model-theoretic stability, definability, double limit conditions, Grothendieck spaces.
}}
\date{\today}
\begin{document}

\maketitle

{\it This paper is dedicated to Prof. A. V. Arhangel’ski\u\i, whose research has inspired the second author for more than fifty years, and who is the founder of $\cp$-theory as a coherent subfield of general topology.}

\begin{abstract}
We survey discrete and continuous model-theoretic notions which have 
important connections to general 
topology. We present a self-contained exposition of several interactions between continuous logic and $\cp$-theory   which have applications to a classification problem involving Banach spaces not including $c_0$ or $l_p$,  following recent results obtained by P. Casazza and J. Iovino for compact continuous logics.
Using $\cp$-theoretic results involving Grothendieck spaces and double limit conditions, we extend their results to a broader family of logics, namely those with a first countable weakly Grothendieck space of types.
We pose $\cp$-theoretic problems which have model-theoretic implications.
\end{abstract}

 \section{\sf Introduction}
 It is perhaps impossible to find an area of modern mathematical research in which topology does not play a relevant role. This, of course, does not imply that a particular instance of this fact has to be of interest for topologists in general. This survey paper is intended to serve as an introduction to model theory, especially to continuous logics, for those who have had little or no previous contact with this branch of mathematical logic, showing how several model-theoretic results can be equivalently translated into topological statements and how answering $\cp$-theoretic questions can lead to solving problems in the context of continuous logics. In particular, we present new results concerning Gowers’ problem \cite{gowers} about the definability of a pathological Banach space.
 
Model theory is mainly concerned with theories and models. Theories are sets of axioms and their logical consequences, and a model of a theory is a mathematical structure satisfying the axioms. A model theorist studies the connections between syntax and semantics: the syntax refers to certain allowable strings of symbols whereas the semantics are about the interpretations of these symbols in a structure. Classical model theory is centred around first-order logic, the logic mathematicians usually work with, in which the Compactness Theorem holds: if every finite set of formulas of a theory has a model, then the theory has a model. Given any logic and a language $L$, two topological spaces are of interest: the space of $L$-structures and the space of types. The Compactness Theorem holds if and only if those spaces are compact.

Let $c_{00}$ denote the space of eventually zero sequences of real numbers, $c_0$ the space of sequences for which the limit is zero, and $l_p$  the space of sequences ($\la{x_n \colon n<\om}\ra$) of real numbers such that $\sum_{n<\om}|x_n|^p<\infty$.
The main topic presented here is a classification problem in continuous logic, which involves a fair amount of $\cp$-theory. In 1974, B. Tsirelson \cite{tsirelson} constructed a Banach space which does not include a copy of $l_p$ or $c_0$. The construction involves a process of approximation that had its inspiration in Cohen’s forcing method. 
What is now called \textit{Tsirelson’s space} is due to T. Figiel and W. Johnson \cite{figieljohnson} and is the dual of the original space constructed by Tsirelson: let $\{x_n \colon n<\om\}$ be the canonical basis for $c_{00}$. 
If $x=\sum_{n<\om}a_nx_n\in{c_{00}}$ and $E, F\in{\fin}$, denote $\sum_{n\in{E}}a_nx_n$ by $Ex$ and write $E\leq{F}$ if and only if $\text{max} E\leq{\minn F}$. The norm $\left\lVert \ \right\rVert_T$ of Tsirelson's space $T$ is constructed by an approximation process and is the unique norm satisfying: $\nnorm_T=
\maxx\{\nnorm_{c_{00}},\frac{1}{2}
\maxx\{\sum_{i<k}\left\lVert {E_ix} \right\rVert_T \colon \{k\}\leq{E_1}<\dots<E_k\}\}$ (see \cite{figieljohnson} for the details). This equation captures the essence of what analysts call \emph{implicit definability}: the norm of Tsirelson’s space $\left\lVert\;\right\rVert_T$ appears on both sides of the previous defining equality. Intuitively, to obtain an implicitly definable Banach space, one has to write down a “definition”, which may very well mention the object to be defined on both sides of the defining equality, and then prove existence and uniqueness of the “defined” object, as opposed to explicitly definable objects which are simply defined by a formula. For a detailed exposition of Tsirelson’s space, the reader is referred to P. Casazza and T. Shura \cite{casazzashura}. 
After Tsirelson, many other pathological Banach spaces were constructed using similar techniques. The motivational problem for our work was posed by E. Odell and popularized by W. T. Gowers \cite{gowers}: is it true that if a Banach space is explicitly definable, then it includes a copy of $l_p$ or $c_0$?
Of course, this is a vague question that requires a rigorous formulation. The first problem one faces is to choose the logic to work with; even for first-order logic the answer was unknown until \cite{casazzaiovino}. \emph{Continuous logics} are the natural candidates for logics as they provide the necessary tools for the analyst’s $\varepsilon$-play and approximations. P. Casazza and J. Iovino \cite{casazzaiovino} presented both a formulation and a solution of this problem in the context of compact continuous logics, i.e. continuous logics which are finitary in nature.

The purpose of addressing parts of the work in \cite{casazzaiovino} here is twofold: to isolate the most relevant interactions between topology and model theory in their paper, and to provide the reader with definitions and approaches that may be more suitable for extending the contents of \cite{casazzaiovino} to non-compact continuous logics such as continuous $\infinitary$ (defined below). A $\cp$-theoretic problem
involved in this classification problem concerns establishing conditions for $X$
such that $\bar{A}$ being compact, for a subspace $A$ of $C_p(X)$,
is  equivalent to a double limit condition holding. The literature on this problem includes A. Grothendieck \cite{grothendieck}, V. Pt\'ak \cite{ptak}, N. Young \cite{young}, A. V. Arhangel’ski\u\i \  \cite{arhangelskii} and V. V. Tkachuk \cite{tkachuk}.

Sections 1 and 2 provide a self-contained introduction to classical results in \emph{discrete} model theory (discrete refers to classical logic where truth values are either 0 or 1) that will then be generalized in the context of continuous logics for metric structures following \cite{benyaacoviovino}, \cite{benyaacovusvyatsov} and \cite{eagle}. Sections 3 and 4 provide a background beyond two-valued first-order logic for the subsequent results. Section 5 includes an exposition of the interactions between $\cp$-theory and continuous logic in \cite{casazzaiovino} together with a couple of $\cp$-theoretic questions and problems that would have implications in continuous model theory.
$\cp$-theorists who find Sections 3 and 4 heavy-going may want to temporarily skip ahead to the discussion of Grothendieck spaces 
in Section 5. Section 6 includes generalizations and some remarks regarding the same classification problem that is addressed in \cite{casazzaiovino}, in the context of infinitary continuous logic.
 
 \section{\sf Preliminaries - What is Model Theory?}
 We first set out some elementary notions that will be used throughout. For a more detailed exposition, see \cite{tentziegler}. A \emph{language} is a set of constants, function symbols and relation symbols, e.g. $\{\in\}$ is the language of set theory, where $\in$ is a binary relation, and $\{\bar{e}, *, ^{-1}\}$ is the language of group theory, where $\bar{e}$ is a constant, $*$ is a binary relation and $^{-1}$ is an unary function.
 \emph{First-order logic} is the usual logic used in mathematics: given a language $L$ 
 formulas are built recursively as certain finite strings of symbols built using the members of $L$, parentheses, variables, and the logical connectives
 $\wedge, \vee, \to, \leftrightarrow, \neg$ and quantifiers $\exists$, $\forall$.
 For the rest of this section let $L$ be a fixed language.  Sometimes it will be useful to denote a formula $\varphi$ by $\varphi^1$ and $\neg{\varphi}$ by $\varphi^0$.

An $L$-structure $\modelm$ is a set $M$, called the \emph{universe of} $\modelm$ together with an \emph{interpretation} of elements of $L$, i.e. a function which assigns an element of $M$ to each constant symbol, a function from $M^n$ to $M$ to each $n$-ary function symbol, and a subset of $M^n$ to each $n$-ary relation symbol. Given an $L$-formula $\varphi$, we denote by $\varphi^{\modelm}$ the interpretation of $\varphi$ in $\modelm$.
A \emph{substructure} $\modeln$ of $\modelm$ is just an $L$-structure, the universe of which is $N$, a subset of $M$, containing all the same interpretations of symbols from $\modelm$. For instance, in the language of group theory, the formula $(\forall x)(\exists y)(x*y=\bar{e})$ is interpreted as $(\forall x\in{\integer})(\exists y\in{\integer})(x+y=0)$ in the additive group $\integer$.
 
A variable is said to be \emph{free} in a formula if it is not under the scope of any quantifier; a \emph{sentence} is a formula without free variables. A \emph{theory} is a set of sentences and their logical consequences; a theory is \emph{consistent} if it has a model and a maximal consistent theory is said to be \emph{complete} (Note consistency is often defined proof-theoretically: not every sentence can be proved. The Completeness Theorem for first-order logic asserts that consistency is equivalent to satisfiability). For example, the axioms and theorems of group theory constitute a theory. If $T$ is a theory, the \emph{satisfaction} relation $\modelm\models{T}$ is defined recursively to mean $\modelm$ is a model of $T$, i.e. $T$ is true in $\modelm$. As examples, $G\models\text{ axioms of group theory}$, for any group $G$, and 
$\real\models(\forall x)(\forall y)(x*y=y*x)$.
A submodel $\modeln$ of $\modelm$ is \emph{elementary} if for any $L$-formula 
$\varphi(x_1,\dots,x_n)$ and $a_1,\dots,a_n\in{\modeln}$, we have 
$\modeln\models\varphi(a_1,\dots,a_n)$ if and only if 
$\modelm\models\varphi(a_1,\dots,a_n)$; in this case we write $\modeln\preceq{\modelm}$.
Two models $\modelm$ and $\modeln$ are \emph{elementarily equivalent} if for any $L$-sentence $\varphi$ we have $\modelm\models{\varphi}$ if and only if 
$\modeln\models{\varphi}$; this is denoted
by $\modelm\equiv{\modeln}$.

If $\modelm$ is an $L$ structure and $A\subseteq{M}$, we can extend the language $L$ to $L(A)$ by adding a constant symbol 
$\dot{a}$ for each $a\in{A}$. In this case, an $L(A)$-formula is simply an $L$-formula with parameters from $A$.

Given a language $L$, two interesting topological spaces arise: the space of types and the space of $L$-structures. We shall now introduce them and see how the Compactness Theorem for first-order logic is equivalent to each of those spaces being compact.
\begin{definition}
Let $\modelm$ be an $L$-structure and $A\subseteq{M}$. A complete $n$-type over $A$ in $\modelm$ is a maximal satisfiable set of $L(A)$-formulas in the variables $x_1, \dots, x_n$;
i.e. a set $p(x_1, \dots, x_n)$ of formulas
$\varphi(x_1,\dots,x_n)$ such that there are
$a_1,\dots,a_n\in{M}$ such that for every $\varphi\in{p}$ we have $\modelm\models{\varphi(a_1,\dots,a_n)}$. The set of all $n$-types is denoted by  $\s^n(A)$.
\end{definition}
Give a formula $\varphi=\varphi(x_1,\dots,x_n)$, we define
$$[\varphi]=\{p\in{\s^n(A)} \colon \varphi\in{p}\}$$ where the superscript is omitted if it is $1$.

Note that all sets of the form $[\varphi]$ 
form a basis for a topology on $\s^n$. Moreover, since the negation: $\neg$ is available in first-order logic, each $[\varphi]$ is clopen. We will soon be dealing with other logics in which negation is not at our disposal and the $[\varphi]$'s
will only constitute a basis for the closed sets. Most of the time, it is enough to study $1$-types, and then the same proofs can be carried out for $n$-types.
\begin{definition}
Let $\strl$ be the set of all equivalence classes under $\equiv$ of $L$-structures.
For each theory $T$ let 
$[T]=\{\bar{\modelm}\in{\strl} \colon \modelm\models{T}\}$ where $\bar{\modelm}$ is the equivalence class of $\modelm$. All the sets of the form $[T]$ constitute a basis
for the closed sets of the topology on $\strl$ known as the space of $L$
-structures. We write $[\varphi]$ instead of $[\{\varphi\}]$.
\end{definition}
\begin{remark}\label{firstremark}
{\rm Considering equivalence classes under $\equiv$ instead of models is a way to go around foundational issues: the upward 
L\"owenheim-Skolem theorem states that if a theory has a model of cardinality $\kappa$ then it has a model of cardinality $\lambda$ for every $\lambda\geq{\kappa}$.
Consequently, dropping the modulo $\equiv$
would make $\strl$ and $\modt$ proper classes.
Considering equivalence classes is equivalent to simply considering the set of all consistent theories, which is a set of cardinality $\leq{2^{|L|}}$.
A topological consequence of mod-ing out by $\equiv$ is that we guarantee that the topology on $\strl$ is Hausdorff. This is not necessarily the case otherwise, since there could be two different elementarily equivalent structures $\modelm$ and $\modeln$ which are not topologically distinguishable.}
\end{remark}
Now we state a result which is a cornerstone of first-order logic:
\begin{theorem}[The Compactness Theorem]
Let $T$ be an $L$-theory in first-order logic. If $T$ is finitely satisfiable, i.e. for any $\Delta\in{[T]^{<\omega}}$ there is an $L$-structure $\modelm_{\Delta}$ satisfying $\modelm_{\Delta}\models{T}$, then $T$ is consistent.
\end{theorem}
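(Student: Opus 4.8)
The plan is to build a single model of $T$ directly from the given finite models by means of an ultraproduct, the key external input being \L o\'s's theorem. Let $I=[T]^{<\om}$ be the index set of finite subsets of $T$, and for each $\Delta\in I$ fix a model $\modelm_\Delta\models\Delta$ as supplied by the finite satisfiability hypothesis. For each sentence $\varphi\in T$ set $\hat\varphi=\{\Delta\in I\colon\varphi\in\Delta\}$.

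First I would check that the family $\{\hat\varphi\colon\varphi\in T\}$ has the finite intersection property: given $\varphi_1,\dots,\varphi_n\in T$, the finite set $\{\varphi_1,\dots,\varphi_n\}$ is itself an index in $I$ and lies in $\hat\varphi_1\cap\dots\cap\hat\varphi_n$. By the ultrafilter lemma this family therefore extends to an ultrafilter $\U$ on $I$. Next I would form the ultraproduct $\modelm=\prod_{\Delta\in I}\modelm_\Delta/\U$ and invoke \L o\'s's theorem, which asserts that for every sentence $\psi$ one has $\modelm\models\psi$ if and only if $\{\Delta\in I\colon\modelm_\Delta\models\psi\}\in\U$. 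For $\varphi\in T$ observe that $\hat\varphi\subseteq\{\Delta\colon\modelm_\Delta\models\varphi\}$, since $\varphi\in\Delta$ forces $\modelm_\Delta\models\varphi$; as $\hat\varphi\in\U$ and ultrafilters are upward closed, the larger set also belongs to $\U$, whence $\modelm\models\varphi$. Thus $\modelm\models T$ and $T$ is consistent.

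The step I expect to be the main obstacle is establishing \L o\'s's theorem itself, which is proved by induction on the complexity of $\psi$. The atomic and Boolean connective cases are routine, but the negation and existential clauses are precisely where the ultrafilter property is needed rather than mere closure under finite intersection and supersets, and the quantifier step requires the Axiom of Choice to select witnesses coordinatewise across the index set. A purely syntactic alternative would sidestep ultraproducts entirely: since every formal derivation is finite, an inconsistency in $T$ would already be derivable from some $\Delta\in[T]^{<\om}$, contradicting the satisfiability of that $\Delta$ by way of the Completeness Theorem quoted above. This second route trades the inductive combinatorics of \L o\'s's theorem for the machinery of a Henkin-style completeness proof.
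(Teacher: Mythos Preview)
Your ultraproduct argument is correct, but the paper takes a different route. Rather than constructing a model directly, the paper works topologically with the Stone space $\mathbb{S}$ of the Lindenbaum--Tarski algebra of $L$-sentences: points of $\mathbb{S}$ are complete $L$-theories, the basic clopen sets are $[\varphi]=\{x\in\mathbb{S}:\varphi\in x\}$, and $\mathbb{S}$ is compact simply because Stone spaces of Boolean algebras always are. The proof is then by contraposition: if $T=\{\varphi_\alpha\}$ has no model, then $\bigcap_\alpha[\varphi_\alpha]=\emptyset$, so the open cover $\{[\neg\varphi_\alpha]\}$ has a finite subcover, yielding a finite subset of $T$ with no model.

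The trade-off is this. Your approach packages the hard work into \L o\'s's theorem (whose quantifier clause, as you note, is where choice enters), and it actually exhibits the model. The paper's approach packages the hard work into the identification of Stone-space points with satisfiable complete theories, which is essentially the Completeness Theorem; once that is granted, the argument is a two-line compactness-of-Stone-spaces observation. This Stone-space formulation is chosen deliberately: it foreshadows the paper's immediate next results, that a logic satisfies Compactness iff its space of structures (and space of types) is compact, so the topological phrasing is the point. Your second, syntactic alternative via finiteness of derivations is in fact closer in spirit to what the paper's proof leans on implicitly.
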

\begin{proof}
Notice that if equivalent $L$-formulas are identified, we get a Boolean algebra and then its corresponding Stone space $\mathbb{S}$
is compact. An element of the Stone space is simply a complete $L$-theory.
We prove the contrapositive. Suppose $T$ 
is inconsistent and take an enumeration
$T=\{\varphi_{\alpha} \colon \alpha<\kappa\}$.
Since $T$ is finitely satisfiable, 
$\oo=\{[\neg{\phi_{\alpha}}] \colon \alpha<\kappa\}$, where 
$[\neg{\varphi_{\alpha}}]=\{x\in{\mathbb{S}} \colon \neg{\varphi_{\alpha}}\in{x}\}$ covers $\mathbb{S}$ since $\bigcap\{[\varphi_{\alpha}] \colon \alpha<\kappa\}=\e$.
If $\{[\neg{\varphi_{\alpha_i}}] \colon i<n\}$ is a finite subcover of $\oo$, then 
$\bigcap\{[\neg{\varphi_{\alpha_i}}] \colon i<n\}=\e$ and thus $\{\varphi_{\alpha_i} \colon i<n\}$ is a finite inconsistent subset
of $T$.
\end{proof}
\qed

The relationship between the Compactness Theorem and the space of structures and the space of  $n$-types is fundamental. Basically, a logic satisfies the Compactness Theorem if and only if its space of structures is compact, and if and only if its space of $n$-types is compact.
To formulate this precisely, we would need to formulate precisely what a logic is, as is done for example in \cite{eagletall}. We won't do this here; the following two theorems will easily be seen to apply to whatever logics are considered in this paper.
\begin{theorem}
Given a logic and a language $L$, every finitely satisfiable set of $L$-formulas
is satisfiable if and only if the space of
$L$-structures $\strl$ is compact.
\end{theorem}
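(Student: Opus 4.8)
The plan is to read the statement as a dictionary translation between the logical notion of finite satisfiability and the topological \emph{finite intersection property} (FIP). The only external fact I would invoke is the standard characterization: a topological space is compact if and only if every family of closed sets with the FIP---every finite subfamily having nonempty intersection---has nonempty total intersection. Since the sets $[T]$ form a basis for the closed sets of $\strl$ and $[T]=\bigcap_{\varphi\in T}[\varphi]$, every closed set is an intersection of the basic closed sets $[\varphi]$, so it will suffice to test the FIP criterion on families of these basic closed sets.

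First I would fix the correspondence between a theory $T$ and the family $\{[\varphi]\colon\varphi\in T\}$ of basic closed subsets of $\strl$. Under it, $T$ is satisfiable exactly when $\bigcap_{\varphi\in T}[\varphi]=[T]\neq\e$ (some $\equiv$-class, hence some structure, models all of $T$), and $T$ is finitely satisfiable exactly when every finite $\Delta\subseteq T$ has $\bigcap_{\varphi\in\Delta}[\varphi]=[\Delta]\neq\e$, that is, exactly when the family $\{[\varphi]\colon\varphi\in T\}$ has the FIP. With this dictionary both directions are immediate. For the backward direction, if $\strl$ is compact and $T$ is finitely satisfiable, then $\{[\varphi]\colon\varphi\in T\}$ has the FIP, so by compactness $[T]=\bigcap_{\varphi\in T}[\varphi]\neq\e$ and $T$ is satisfiable. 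For the forward direction, assuming every finitely satisfiable theory is satisfiable, I take any family of basic closed sets with the FIP, write it as $\{[\varphi]\colon\varphi\in T\}$; the FIP says precisely that $T$ is finitely satisfiable, hence satisfiable, hence $[T]\neq\e$, which is the required nonempty intersection.

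The one step that is more than pure translation---and the place I expect to have to be careful---is the reduction of the compactness criterion from arbitrary closed sets to the basic closed sets $[\varphi]$ (a closed-set version of the Alexander subbase lemma). The clean way to handle it is elementary: given an arbitrary family of closed sets with the FIP, replace each member by the basic closed sets appearing in its decomposition; since every such basic set contains the closed set it came from, the enlarged family still has the FIP, and its total intersection is unchanged, so it is enough to know the criterion for basic closed sets. A secondary point worth one sentence is the legitimacy of passing to $\equiv$-classes: satisfiability of a set of sentences is invariant under elementary equivalence, so both $[T]$ and the predicate ``$T$ is satisfiable'' descend to $\strl$ without ambiguity, which is what makes the dictionary well defined.
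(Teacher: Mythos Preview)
Your proposal is correct and follows essentially the same approach as the paper: both set up the dictionary between finite satisfiability of $T$ and the centred/FIP property of $\{[\varphi]:\varphi\in T\}$, and then read off each direction. The only difference is that you are more explicit than the paper about why it suffices to verify the FIP criterion on the basic closed sets $[\varphi]$ rather than on all closed sets; the paper silently uses this reduction, while you spell out the elementary argument (replace each closed set by the basic sets in its decomposition).
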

\proof 
$\Rightarrow)$
Let $\{[\varphi_{\alpha}] \colon \alpha<\kappa\}$ be a centred family (i.e. every finite subfamily has non-empty intersection) of basic closed sets of $\strl$.
Since $[\varphi_{\alpha}]\cap{[\varphi_{\beta}]}=[\varphi_{\alpha}\wedge\varphi_{\beta}]$, $T=\{\varphi_{\alpha}\colon \alpha<\kappa\}$ is finitely satisfiable. The Compactness Theorem yields a model $\modelm$
of $T$, i.e. $\modelm\in{\bigcap\{[\varphi_{\alpha}] \colon \alpha<\kappa\}}$.
$ \Leftarrow)$
If $T=\{\varphi_{\alpha}\colon \alpha<\kappa\}$ is finitely satisfiable, then 
$\{[\varphi_{\alpha}] \colon \alpha<\kappa\}$ is a centred family and any 
$\modelm\in{\bigcap\{[\varphi_{\alpha}] \colon \alpha<\kappa\}}$ is a model of $T$.
\qed
\begin{theorem}
Given a logic, a language $L$ and a $n<\om$,
every finitely satisfiable set of $L$-formulas
is satisfiable if and only if the space of types $\s^n$ is compact.
\end{theorem}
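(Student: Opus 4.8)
The plan is to run the same argument as in the preceding theorem, transporting it from the basic closed sets $[T]$ of $\strl$ to the basic closed sets $[\varphi]$ of $\s^n$. The one conceptual adjustment to make first is the dictionary between the two sides of the equivalence: a point of $\s^n$ is a maximal satisfiable set of formulas in the fixed variables $x_1,\dots,x_n$, so $p\in[\varphi]$ says precisely that some tuple $a_1,\dots,a_n$ realizing $p$ satisfies $\varphi$, and $p\in\bigcap_{\alpha}[\varphi_\alpha]$ says that a single tuple simultaneously satisfies every $\varphi_\alpha$. Hence ``the family $\{[\varphi_\alpha]\}$ has non-empty intersection'' and ``the set $\{\varphi_\alpha\colon\alpha<\kappa\}$ is satisfiable'' are literally the same assertion, which is what makes the equivalence go through.

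For the direction ($\Rightarrow$) I would assume that finite satisfiability implies satisfiability and take a centred family $\{[\varphi_\alpha]\colon\alpha<\kappa\}$ of basic closed sets of $\s^n$. Since a point of $\s^n$ is a maximal satisfiable set, $\varphi,\psi\in p$ iff $\varphi\wedge\psi\in p$, whence $[\varphi_\alpha]\cap[\varphi_\beta]=[\varphi_\alpha\wedge\varphi_\beta]$, so centredness of the family is exactly finite satisfiability of $p=\{\varphi_\alpha\colon\alpha<\kappa\}$. The hypothesis then produces a tuple realizing $p$ in some structure, and the complete type of that tuple is a point of $\bigcap_\alpha[\varphi_\alpha]$; thus every centred family of basic closed sets has non-empty intersection and $\s^n$ is compact.

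For the direction ($\Leftarrow$) I would assume $\s^n$ compact and begin with a finitely satisfiable set $p=\{\varphi_\alpha\colon\alpha<\kappa\}$. Each finite subfamily being satisfiable makes the corresponding finite intersection $[\bigwedge_{i<k}\varphi_{\alpha_i}]$ non-empty, so $\{[\varphi_\alpha]\colon\alpha<\kappa\}$ is a centred family of closed sets; compactness hands back a type $p^{\ast}\in\bigcap_\alpha[\varphi_\alpha]$, and any realization of $p^{\ast}$ realizes every $\varphi_\alpha$, so $p$ is satisfiable.

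The step I expect to need the most care is not the topology but the bookkeeping hidden in the dictionary above. I must restrict attention to formulas in the distinguished free variables $x_1,\dots,x_n$ on which $\s^n$ is built (the ambient ``finitely satisfiable set of $L$-formulas'' in the statement should be read in these variables), and I must use that $\s^n$ is assembled as the space of \emph{all} satisfiable $n$-types, so that realizing tuples found in different witnessing structures are nonetheless collected as points of one and the same space. In the finitary setting these points are routine, but I would word the correspondence between points of $\s^n$ and realizing tuples carefully enough to survive the transition to the logics of Sections 3--6, where the $[\varphi]$ are only basic \emph{closed} sets and negation is unavailable.
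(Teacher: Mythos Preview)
Your proposal is correct and follows essentially the same route as the paper's own proof: translate centred families of basic closed sets $[\varphi_\alpha]$ in $\s^n$ into finitely satisfiable sets of formulas and back, using a realizing tuple to produce a point in the intersection. The only cosmetic difference is that where you pass directly to ``the complete type of that tuple'', the paper phrases the same step as an application of Zorn's lemma to extend the satisfiable set to an $n$-type; these are the same move.
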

\proof
$\Rightarrow)$ 
Let $\{[\varphi_{\alpha}] \colon \alpha<\kappa\}$ be a centred family of basic closed sets of $\s^n$.
Then $\{\varphi_{\alpha}\colon \alpha<\kappa\}$ is finitely satisfiable, and so it is satisfiable, i.e. there is an $L$-structure $\modelm$ with $a_1,\dots,a_n\in{M}$ such that for every
$\alpha<\kappa$ we have 
$\modelm\models\varphi_{\alpha}(a_1,\dots,a_n)$. By an application of Zorn's lemma, $T$ can be
extended to an $n$-type $p$.
Clearly, $p\in{\bigcap\{[\varphi_{\alpha}] \colon \alpha<\kappa\}}$.
$\Leftarrow)$
If $T=\{\varphi_{\alpha}\colon \alpha<\kappa\}$ is finitely satisfiable
then $\{[\varphi_{\alpha}] \colon \alpha<\kappa\}$ is a centred family of closed sets in $\s^n$.
Take $p\in{\bigcap\{[\varphi_{\alpha}] \colon \alpha<\kappa\}}$; then $p$ is a type and so it is satisfied by a model $\modelm$.
In particular $\modelm\models\varphi_{\alpha}$
for each $\alpha<\kappa$.
\qed

\section{\sf Stability and Definability in First-Order Logic}
It is a fruitful area of research to consider logics other than first-order logic. However, one cannot expect the Compactness Theorem to hold in every case. The following sections are devoted to presenting an exposition of logics with stronger expressive powers and showing how to use different topological tools when the Compactness Theorem does not hold.

We first examine the concept of \emph{stability} which has been a keystone of model theory since Shelah \cite{shelah} introduced it. The purpose of this section is to show that studying stability is a natural way to approach definability.
\begin{definition}
If $\boldsymbol{x}$ is a $n$-tuple of variables, we write $l(\boldsymbol{x})=n$. Let $\modelm$ be an $L$-structure, $\varphi(\boldsymbol{x},\boldsymbol{y})$
an $L$-formula where $l(\boldsymbol{x})=n$ and 
$l(\boldsymbol{y})=m$, $A\subseteq{M}$ and $\boldsymbol{b}\in{M^n}$. An $n$-$\varphi$-type over $A$ in $\modelm$ is a set of formulas
$tp_{\varphi}(\boldsymbol{b},A,\modelm)=
\{\varphi^{t}(\boldsymbol{x},\boldsymbol{a})\colon \boldsymbol{a}\in{A^m} \wedge t<2 \wedge \modelm\models \varphi^t(\boldsymbol{b},\boldsymbol{a})\}$.
Thus, the space of $n$-$\varphi$-types is
$\s_{\varphi}^n=\{tp_{\varphi}(\boldsymbol{b},A,\modelm) \colon \boldsymbol{b}\in{M^n}\}$.
Again, we omit the $n$ from the notation when it is $1$.
\end{definition}
\begin{definition}
Let $T$ be an $L$-theory
\begin{itemize}
    \item [\sf (i)] A model $\modelm$ of $T$ is stable on an infinite cardinal $\lambda$ if for every $A\subseteq{M}$ such that $|A|\leq{\lambda}$ we have $|\s(A)|\leq{\lambda}$.
    \item [\sf (ii)] $T$ is stable if all its models are stable.
    \item [\sf (iii)] An $L$-formula $\varphi\in{T}$ is stable in $\modelm$ on an infinite cardinal $\lambda$ if for every $A\subseteq{M}$ such that $|A|\leq{\lambda}$ we have that $|\s_{\varphi}(A)|\leq{\lambda}$.
\end{itemize}
\end{definition}
Since the previous definition of stability is usually cumbersome to work with, we present several equivalent definitions. For a more detailed exposition, see \cite{shelah2}. Despite a superficial resemblance to the concept of stability in $\cp$-theory, we have not been able to find a nice connection between the two.
\begin{definition}
An $L$-formula $\varphi(\boldsymbol{x},\boldsymbol{y})$ has the order property in $\modelm$ if there are sequences $\la{\boldsymbol{a}_i \colon i<\om}\ra$ 
and $\la{\boldsymbol{b}_j \colon j<\om}\ra$ of tuples from $M$ such that, for every $i,j<\om$, 
$\modelm\models\varphi(\boldsymbol{a}_i,\boldsymbol{b}_j)$ if and only if $i<j$.
\end{definition}

\begin{remark} \label{remarkramsey}
{\rm It is a consequence of the Compactness Theorem and Ramsey’s Theorem that having the order property is symmetric in $\boldsymbol{x}$ and $\boldsymbol{y}$, i.e.~if $\varphi(\boldsymbol{x},\boldsymbol{y})$ has the order property, so does $\varphi(\boldsymbol{y},\boldsymbol{x})$.
For a detailed proof of this fact, see \cite{shelah2} or \cite{tentziegler}.}
\end{remark}
\begin{theorem}
Let $T$ be an $L$-theory. Then the following are equivalent: 
\begin{itemize}
    \item [\sf (i)] $T$ is stable on some $\lambda\geq{\aleph_0}$.
    \item [\sf (ii)] $T$ is stable on every $\lambda\geq{\aleph_0}$.
    \item [\sf (iii)] Every formula $\varphi(\boldsymbol{x},\boldsymbol{y})$ in $T$ is stable on some $\lambda\geq{\aleph_0}$ in every model of $T$.
    \item [\sf (iv)] Every formula $\varphi(\boldsymbol{x},\boldsymbol{y})$ in $T$ is stable on every $\lambda\geq{\aleph_0}$ in every model of $T$.
    \item [\sf (v)] For every formula $\varphi(\boldsymbol{x},\boldsymbol{y})$ in $T$ and every model $\modelm$ of $T$, $\varphi$ does not have the order property in $\modelm$.
\end{itemize}
\end{theorem}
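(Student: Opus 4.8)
The plan is to prove the five conditions equivalent through the cycle $(i)\Rightarrow(iii)\Rightarrow(v)\Rightarrow(iv)\Rightarrow(ii)\Rightarrow(i)$, reducing the two theory-level statements $(i),(ii)$ to the three formula-level statements $(iii),(iv),(v)$ by a pair of type-counting inequalities, and concentrating the real work in the interplay between the order property and the boundedness of the spaces $\s_\varphi(A)$.

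First the scaffolding. The implication $(ii)\Rightarrow(i)$ is immediate. For $(i)\Rightarrow(iii)$ I would use that forgetting all but the $\varphi$-instances of a complete type gives a surjection $\s(A)\twoheadrightarrow\s_\varphi(A)$, so $|\s_\varphi(A)|\le|\s(A)|$; hence $\lambda$-stability of $T$ forces $\lambda$-stability of every $\varphi$ in every model, and the witnessing $\lambda$ is preserved. For $(iv)\Rightarrow(ii)$ I would run the reverse inequality: a complete type $p\in\s(A)$ is determined by the list of its $\varphi$-reducts, one for each formula $\varphi$, so that $|\s(A)|\le\prod_{\varphi}|\s_\varphi(A)|\le\lambda^{|L|+\aleph_0}$. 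This is the one place where the cardinal arithmetic is delicate: the bound collapses to $\lambda$ exactly when $\lambda^{|L|+\aleph_0}=\lambda$, so the honest reading of ``every $\lambda\ge\aleph_0$'' in $(ii)$ is ``every $\lambda$ with $\lambda=\lambda^{|L|+\aleph_0}$''; I would flag this explicitly rather than hide it.

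The two genuinely model-theoretic steps are $(iii)\Rightarrow(v)$ and $(v)\Rightarrow(iv)$. For $(iii)\Rightarrow(v)$ I argue the contrapositive: if some $\varphi(\boldsymbol x,\boldsymbol y)$ has the order property in some $\modelm\models T$, then for \emph{every} infinite $\lambda$ the formula $\varphi$ is unstable on $\lambda$, which destroys $(iii)$. The engine is the Compactness Theorem: the order property makes arbitrarily long finite ladders $\varphi(\boldsymbol a_i,\boldsymbol b_j)\Leftrightarrow i<j$ expressible, so compactness together with Ramsey's Theorem, exactly as in Remark~\ref{remarkramsey}, produces, in a suitable model of $\mathrm{Th}(\modelm)$, an order-indiscernible family indexed by any prescribed linear order $I$. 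Choosing $I$ of size $\lambda$ with more than $\lambda$ Dedekind cuts (such an $I$ exists for every infinite $\lambda$), each cut yields a distinct $\varphi$-type over the size-$\lambda$ parameter set $\{\boldsymbol b_i:i\in I\}$, so $|\s_\varphi(A)|>\lambda$; hence instability holds at every $\lambda$.

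The crux, and the step I expect to be the main obstacle, is $(v)\Rightarrow(iv)$: absence of the order property must be turned into the \emph{upper} bound $|\s_\varphi(A)|\le|A|+|L|+\aleph_0$. Here the usable consequence of $(v)$ is finitary: by compactness, no order property yields a fixed $n$ bounding the length of every $\varphi$-ladder. I would convert this finite ladder bound into a counting bound by either of the two standard routes: (a) a \emph{definability of types} argument, showing every $p\in\s_\varphi(M)$ over a model $M$ is defined by a single formula $d_p\varphi(\boldsymbol y)$ with parameters from $M$, so that $|\s_\varphi(M)|\le|M|+|L|+\aleph_0$, and then passing from an arbitrary $A$ to a L\"owenheim--Skolem model $M\supseteq A$ of size $|A|+|L|+\aleph_0$; or (b) a local-rank argument, showing that the finite ladder bound forces Shelah's $\varphi$-rank to be finite with finite multiplicity, whence the same bound. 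Either way the delicate content is extracting definability or finite rank from the bare combinatorial failure of order, which is where Ramsey-style averaging over indiscernibles does the heavy lifting. Granting this, $\varphi$ is stable on every $\lambda\ge|L|+\aleph_0$, giving $(iv)$ and closing the cycle.
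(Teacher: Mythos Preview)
The paper does not prove this theorem; immediately after the statement it simply writes ``For a proof see Shelah \cite{shelah2}.'' So there is no in-paper argument against which to compare your sketch. That said, the route you outline---reducing the global type count to a product of local $\varphi$-type counts, and handling the formula level via the equivalence of the order property with unboundedness of $|\s_\varphi(A)|$, using compactness plus Ramsey to stretch ladders in one direction and definability of $\varphi$-types (equivalently, finiteness of the local $\varphi$-rank) in the other---is exactly the standard one found in Shelah's book and in Tent--Ziegler \cite{tentziegler}, the paper's other cited source for this material.

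One point deserves sharpening rather than mere flagging. Your instinct that $(iv)\Rightarrow(ii)$ is the fragile link is correct, but the difficulty is not just delicate bookkeeping: the equivalence $(i)\Leftrightarrow(ii)$ as literally stated is \emph{false}. Any stable but non-superstable theory---for instance the theory of infinitely many refining equivalence relations $E_0\supseteq E_1\supseteq\cdots$ in which each $E_n$-class splits into infinitely many $E_{n+1}$-classes---is, in a countable language, $\lambda$-stable precisely when $\lambda^{\aleph_0}=\lambda$; so it is stable on $2^{\aleph_0}$ but not on $\aleph_0$. Thus your ``honest reading'' of $(ii)$ is not a technical caveat but the only reading under which the theorem holds, and you should say so directly rather than present it as a side remark. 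The formula-level equivalences $(iii)\Leftrightarrow(iv)\Leftrightarrow(v)$, and their equivalence with $(i)$, are correct as written, and your sketch of those implications is sound.
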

For a proof see Shelah \cite{shelah2}.

We now state the classical theorem from first-order model theory which relates the concepts of stability and definability.
\begin{definition}
Let $\modelm$ be an $L$-structure and $A,B\subseteq{M}$
\begin{itemize}
    \item [\sf (i)] We say that $A$ is $B$-definable if there is an $L(B)$-formula $\varphi(x)$ such that $A=\{x\in{M} \colon \modelm\models\varphi(x)\}$.
    \item [\sf (ii)] An $n$-$\varphi$-type over $A$ in $\modelm$ is $B$-definable if there is an $L$-formula $\psi(\boldsymbol{x},\boldsymbol{y})$ and a tuple $\boldsymbol{b}$ from $B$ so that for every $\boldsymbol{a}\in{A^n}$, 
    $\varphi(\boldsymbol{x},\boldsymbol{a})\in{p}
    $ if and only if $\modelm\models\psi(\boldsymbol{a},\boldsymbol{b})$.
\end{itemize}
\end{definition}
The following is a classical theorem, the proof of which can be found in the literature. See for instance, \cite{shelah2} or \cite{tentziegler}.
\begin{theorem}[The Definability Theorem]
An $L$-formula $\varphi$ is stable in $\modelm$ if and only if for every $A\subseteq{M}$ and for every $p\in{\s_{\varphi}(A)}$, $p$ is $A$-definable. 
\end{theorem}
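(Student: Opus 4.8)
The plan is to prove the two implications separately, disposing of the easy counting direction first and then treating the harder rank-theoretic direction.

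For the implication from definability to stability, I would argue by counting types. Suppose every $p \in \s_\varphi(A)$ is $A$-definable. Then each such $p$ is completely determined by a pair consisting of a defining $L$-formula $\psi(\boldsymbol{y},\boldsymbol{z})$ together with a parameter tuple $\boldsymbol{b}$ from $A$: by definition $\varphi(\boldsymbol{x},\boldsymbol{a}) \in p$ iff $\modelm \models \psi(\boldsymbol{a},\boldsymbol{b})$, and a complete $\varphi$-type is fixed by the set of instances it contains. Since there are at most $|L| + \aleph_0$ formulas $\psi$ and at most $|A|^{<\om} = |A|$ parameter tuples (for infinite $A$), this gives $|\s_\varphi(A)| \leq (|L|+\aleph_0)\cdot|A|$. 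Hence for every $\lambda \geq |L|+\aleph_0$ with $|A| \leq \lambda$ we have $|\s_\varphi(A)| \leq \lambda$, so $\varphi$ is stable on every sufficiently large $\lambda$, in particular stable in $\modelm$.

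For the converse, the first step is to replace counting-stability by its combinatorial shadow. Using compactness together with Ramsey's theorem (as in Remark \ref{remarkramsey}), I would pass to the local equivalence between stability and the order property: $\varphi$ stable in $\modelm$ forces $\varphi$ to lack the order property, and this in turn makes the local binary-splitting rank $R_\varphi$ finite. Here $R_\varphi(\theta)$ is defined on formulas $\theta(\boldsymbol{x})$ with parameters by declaring $R_\varphi(\theta) \geq 0$ when $\theta$ is consistent, and $R_\varphi(\theta) \geq n+1$ when there is a parameter $\boldsymbol{b}$ with both $R_\varphi(\theta \wedge \varphi(\boldsymbol{x},\boldsymbol{b})) \geq n$ and $R_\varphi(\theta \wedge \neg\varphi(\boldsymbol{x},\boldsymbol{b})) \geq n$. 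The absence of the order property is exactly the failure of an infinite splitting tree, which bounds $R_\varphi(\boldsymbol{x}=\boldsymbol{x})$ below $\om$. With finite rank in hand, I would extract the defining formula: fixing $A$ and $p \in \s_\varphi(A)$, among all finite conjunctions $\chi(\boldsymbol{x}) = \bigwedge_{i<k}\varphi^{t_i}(\boldsymbol{x},\boldsymbol{a}_i)$ of instances lying in $p$ (so each $\boldsymbol{a}_i \in A$) I would choose one whose rank $R_\varphi(\chi)$ is least and, among those, whose local multiplicity (Morley degree) at that rank is least; both minima exist since the rank is a natural number. A standard computation then shows that for $\boldsymbol{a} \in A$ exactly one of $\chi \wedge \varphi(\boldsymbol{x},\boldsymbol{a})$ and $\chi \wedge \neg\varphi(\boldsymbol{x},\boldsymbol{a})$ retains the minimal rank-and-degree, and that it is $\varphi(\boldsymbol{x},\boldsymbol{a})$ precisely when $\varphi(\boldsymbol{x},\boldsymbol{a}) \in p$. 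The defining formula $\psi(\boldsymbol{y})$ is therefore the assertion that $\chi(\boldsymbol{x}) \wedge \varphi(\boldsymbol{x},\boldsymbol{y})$ still has rank $R_\varphi(\chi)$ and full degree, whose parameters $\boldsymbol{a}_0,\dots,\boldsymbol{a}_{k-1}$ all lie in $A$; hence $p$ is $A$-definable.

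The main obstacle is the definability-of-rank lemma: that for each fixed $n < \om$ the condition ``$R_\varphi(\theta(\boldsymbol{x},\boldsymbol{y})) \geq n$'' on the parameter $\boldsymbol{y}$ is expressible by a single $L$-formula (and likewise for the degree bookkeeping). This is exactly what converts the rank description of $\psi$ into an honest first-order formula, and it is the step where finiteness of the rank is indispensable. I would prove it by induction on $n$: the base case is mere consistency, and the inductive step codes the existence of a splitting witness $\boldsymbol{b}$, using compactness to collapse the a priori infinitary ``for some $\boldsymbol{b}$'' into a bounded, first-order statement once the rank is known to be finite. Verifying this compactness collapse cleanly, and checking that the chosen $\chi$ really yields the advertised dichotomy, is the technical heart of the argument.
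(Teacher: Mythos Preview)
The paper does not supply its own proof of this theorem: immediately before the statement it notes that the result is classical and refers the reader to Shelah \cite{shelah2} and Tent--Ziegler \cite{tentziegler}. Your outline is essentially the standard local-rank proof one finds in those sources (the $R_\varphi$, or Shelah $2$-rank, argument), so you are aligned with what the paper points to rather than diverging from it.

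One correction is worth making. You flag the definability-of-rank lemma as the main obstacle and propose to handle it via compactness, ``collaps[ing] the a priori infinitary `for some $\boldsymbol{b}$' into a bounded, first-order statement''. But there is nothing infinitary here: the inductive clause for $R_\varphi(\theta)\geq n+1$ is simply
\[
(\exists \boldsymbol{b})\bigl(R_\varphi(\theta\wedge\varphi(\boldsymbol{x},\boldsymbol{b}))\geq n \ \wedge\ R_\varphi(\theta\wedge\neg\varphi(\boldsymbol{x},\boldsymbol{b}))\geq n\bigr),
\]
a single first-order existential quantifier applied to a formula already available by the inductive hypothesis. So definability of each rank level (and likewise of the degree) follows by a direct induction on $n$, with no compactness needed at that stage. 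Compactness and Ramsey are genuinely doing work earlier, in the passage from ``$\varphi$ is stable'' to ``$\varphi$ lacks the order property'' and in bounding $R_\varphi$; you should relocate the appeal to compactness there. The dichotomy verification for your minimal-rank-and-degree $\chi$ is, as you say, the remaining substantive content.
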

As done in \cite{casazzaiovino} for compact continuous logics, a reasonable scheme for approaching the problem of the definability of Tsirelson-like spaces consists of the following:
\begin{itemize}
    \item [\sf (i)] Extend the notions of stability and definability from first-order logic to the logic in question.
    \item [\sf (ii)]Prove an analogue of the Definability Theorem.
    \item [\sf (iii)] Establish a relationship between stability and double limit conditions. \emph{This is where $\cp$-theory plays a fundamental role.}
    \item [\sf (iv)] Decide on the definability in that logic of Tsirelson’s space (or any other Banach space) by using the double limit condition.
\end{itemize}
Given a model $\modelm$, it is convenient to also view an $L$-formula $\varphi(x,y)$ as a function $\varphi \colon M\times{M}\to 2$ such that $\varphi(a,b)=1$ if and only if $\modelm\models\varphi(a,b)$. Then a $\varphi$-type $q=tp_{\varphi}(a,A,\modelm)$ can be identified with the function 
$a\rightarrow\varphi(a,b)$ (the $\varphi$-type is recovered by taking the pre-image of $\{1\}$).

In the following chapters, stability will be defined in terms of double-(ultra)limit conditions. We remind the reader that if $\U$ is an ultrafilter over $\kappa$ and $\langle x_\alpha : \alpha<\kappa \rangle$ is a $\kappa$-sequence, we write $\lim_{\alpha\to\U}x_\alpha =x$ if for every neighbourhood $U$ of $x$, $\{\alpha : x_\alpha\in U\}\in \U$. As a motivation, we see that this is the case in first-order logic, following Iovino \cite{iovino}:
\begin{lemma}\label{3.8}
Let $\varphi\colon A\times{B}\to [0,1]$ be a function, $\la{a_n \colon n<\om}\ra$ and 
$\la{b_m \colon m<\om}\ra$ be sequences in $A$ and $B$ respectively and $\U$ and $\V$ ultrafilters on $\om$.
If
$$\lim_{n\to\U}\lim_{m\to\V}\varphi(a_n,b_m)=\alpha \text{ and }
\lim_{m\to\V}\lim_{n\to\U}\varphi(a_n,b_m)=\beta
$$
then there are subsequences 
$\la{a_{n_i} \colon i<\om}\ra$ and
$\la{b_{m_j} \colon j<\om}\ra$ such that
$$\lim_{\stackrel{i<j}{i\to\infty}}\varphi(a_{n_i},b_{m_j})=\alpha \text{ and }
\lim_{\stackrel{j<i}{j\to\infty}}\varphi(a_{n_i},b_{m_j})=\beta.
$$
\end{lemma}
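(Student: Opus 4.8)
The plan is to extract both subsequences by a single interleaved recursion, controlling every off-diagonal pair $(n_i,m_j)$ exactly once, at the stage where the larger of its two indices is chosen. First I would record that, since $[0,1]$ is compact Hausdorff, every ultrafilter limit appearing below exists and is unique; writing $g(n)=\lim_{m\to\V}\varphi(a_n,b_m)$ and $h(m)=\lim_{n\to\U}\varphi(a_n,b_m)$, the hypotheses read $\lim_{n\to\U}g(n)=\alpha$ and $\lim_{m\to\V}h(m)=\beta$. I may assume both ultrafilters are nonprincipal (if either is principal the corresponding iterated limit is just a value at a fixed index and the conclusion degenerates), so that every member of $\U$ and of $\V$ is infinite; this is exactly what lets me always choose the next index larger than all previous ones.

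Fix $\varepsilon_k=2^{-k}$. I would build $n_0<n_1<\cdots$ and $m_0<m_1<\cdots$ by recursion, at stage $k$ choosing first $n_k$ and then $m_k$. When choosing $n_k$ I intersect the set $\{n:|g(n)-\alpha|<\varepsilon_k/2\}\in\U$ with the finitely many sets $\{n:|\varphi(a_n,b_{m_j})-h(m_j)|<\varepsilon_j/2\}\in\U$ for $j<k$; the intersection lies in $\U$, hence is infinite, so I pick $n_k$ in it beyond all earlier indices. Symmetrically, when choosing $m_k$ I intersect $\{m:|h(m)-\beta|<\varepsilon_k/2\}\in\V$ with the sets $\{m:|\varphi(a_{n_i},b_m)-g(n_i)|<\varepsilon_i/2\}\in\V$ for $i<k$, and pick $m_k$ in this $\V$-set beyond all earlier indices. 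The key structural point is the asymmetry: pairs with $i<j$ are anchored to $g(n_i)\approx\alpha$ and pinned down by a $\V$-condition imposed when $m_j$ is chosen, whereas pairs with $j<i$ are anchored to $h(m_j)\approx\beta$ and pinned down by a $\U$-condition imposed when $n_i$ is chosen.

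Finally I would verify the two limits by the triangle inequality. For $i<j$, the choice of $m_j$ gives $|\varphi(a_{n_i},b_{m_j})-g(n_i)|<\varepsilon_i/2$ and the choice of $n_i$ gives $|g(n_i)-\alpha|<\varepsilon_i/2$, so $|\varphi(a_{n_i},b_{m_j})-\alpha|<\varepsilon_i$, whence $\lim_{i<j,\,i\to\infty}\varphi(a_{n_i},b_{m_j})=\alpha$. For $j<i$, the choice of $n_i$ gives $|\varphi(a_{n_i},b_{m_j})-h(m_j)|<\varepsilon_j/2$ and the choice of $m_j$ gives $|h(m_j)-\beta|<\varepsilon_j/2$, so $|\varphi(a_{n_i},b_{m_j})-\beta|<\varepsilon_j$, giving the second limit $\beta$.

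I expect the only real obstacle to be the bookkeeping: one must check that every off-diagonal pair is constrained exactly once, at the correct stage and by the correct ultrafilter, and that the diagonal pairs $(n_k,m_k)$ need no constraint at all. The analytic content reduces to the two three-term triangle inequalities above; the combinatorial content is the interleaving together with the repeated use of the fact that a finite intersection of ultrafilter members is again a (necessarily infinite) member, which simultaneously guarantees non-emptiness and the ability to drive the indices to infinity.
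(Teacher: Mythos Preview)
Your proof is correct and follows essentially the same approach as the paper's: an interleaved recursion that at each stage chooses the next $n_i$ and $m_j$ from finite intersections of ultrafilter members, controlling the outer limit with sets of the form $\{n:|g(n)-\alpha|<\varepsilon\}$, $\{m:|h(m)-\beta|<\varepsilon\}$ and the inner limit with sets of the form $\{n:|\varphi(a_n,b_{m_j})-h(m_j)|<\varepsilon\}$, $\{m:|\varphi(a_{n_i},b_m)-g(n_i)|<\varepsilon\}$. The only cosmetic differences are your choice of $\varepsilon_k=2^{-k}$ versus the paper's $1/(k+1)$, and that the paper also (harmlessly) constrains the diagonal pair at each stage while you correctly observe this is unnecessary; your remark on the principal case is a touch imprecise, but the paper glosses over it as well.
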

\begin{proof}
Given $\varepsilon>0$ we have
\begin{itemize}
    \item [\sf (i)] $(\exists U_{\varepsilon}\in{U})(\forall n\in{U_{\varepsilon}})|
    \lim_{m\to\V}\varphi(a_n,b_m)-\alpha|<
    \varepsilon$
    \item [\sf (ii)] $(\exists V_{\varepsilon}\in{V})(\forall m\in{V_{\varepsilon}})|
    \lim_{n\to\U}\varphi(a_n,b_m)-\beta|<
    \varepsilon$
    \item [\sf (iii)] $(\forall m<\om)(\exists \U^m_\varepsilon\in\U)(\forall n\in\U^m_\varepsilon)|
    \varphi(a_n,b_{m})-\lim_{n\to \U}\varphi(a_n,b_{m})|<\varepsilon$.
    \item [ \sf(iv)] $(\forall n<\om)(\exists \V^n_\varepsilon\in\V)(\forall m\in\V^n_\varepsilon)|
    \varphi(a_n,b_m)-\lim_{m\to \V}\varphi(a_n,b_m)|<\varepsilon$.
\end{itemize}
Notice that {\sf (i)} and {\sf (ii)} follow from the convergence hypothesis, and 
{\sf (iii)} and {\sf (iv)} follow from the compactness of [0,1].
Construct recursively two increasing sequences
$\la{n_i \colon i<\om}\ra$ and $\la{m_j \colon j<\om}\ra$ such that $n_0\in{U_1}$, $m_0\in{V_1\cap{V_1^{n_0}}}$ and 
$n_i\in{U_{1/i+1}}
\cap{\bigcap_{k<i}U_{1/i+1}^{m_k}}$ and 
$m_j\in{U_{1/j+1}}
\cap{\bigcap_{k\leq{j}}V_{1/j+1}^{n_k}}$.
Now consider the subsequences 
$\la{a_{n_i} \colon i<\om}\ra$ and 
$\la{b_{m_j} \colon j<\om}\ra$.
The construction shows that
$\lim_{\stackrel{i<j}{i\to\infty}}
\varphi(a_{n_i},b_{m_j})=\alpha$ and similarly $\lim_{\stackrel{j<i}{j\to\infty}}\varphi(a_{n_i},b_{m_j})=\beta$.
\end{proof}
\qed

\noindent
The previous result is used in \cite{iovino} to prove:

\begin{theorem}
Given a formula $\varphi(x,y)$, the following are equivalent:
\begin{itemize}
    \item [\sf (i)] $\varphi(x,y)$ is stable.
    \item [\sf (ii)] For any sequences $\la{a_n \colon n<\om}\ra$ and $\la{b_m \colon m<\om}\ra$
    in $M$, and any ultrafilters $\U$ and $\V$ on $\om$, $\lim_{n\to \U}\lim_{m\to \V}\varphi(a_n,b_m)=\lim_{m\to \V}\lim_{n\to \U}\varphi(a_n,b_m)$.
\end{itemize}
\end{theorem}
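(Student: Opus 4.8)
The plan is to route this equivalence through the \emph{order property} rather than attacking the iterated limits head-on. By the Stability Theorem above, read formula-by-formula (its clause (v) is exactly the negation of the order property), $\varphi$ is stable in $\modelm$ if and only if $\varphi$ does not have the order property in $\modelm$. So it suffices to prove that condition (ii) holds if and only if $\varphi$ lacks the order property, and I would establish the two contrapositive implications separately. Throughout I would use that in first-order logic $\varphi$ is two-valued, so each iterated ultralimit lands in $\{0,1\}$ and ``(ii) fails'' simply means that the two iterated limits are $0$ and $1$ in some order.

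The easy direction is order property $\Rightarrow \neg$(ii), which needs no lemma. Assuming $\varphi$ has the order property, witnessed by $\la a_i : i<\om\ra$ and $\la b_j : j<\om\ra$ with $\modelm\models\varphi(a_i,b_j)$ iff $i<j$, I would fix any non-principal ultrafilters $\U,\V$ on $\om$. For fixed $n$ the set $\{m : n<m\}$ is cofinite, hence in $\V$, so $\lim_{m\to\V}\varphi(a_n,b_m)=1$ and therefore $\lim_{n\to\U}\lim_{m\to\V}\varphi(a_n,b_m)=1$. Symmetrically, for fixed $m$ the set $\{n : n\ge m\}$ is cofinite, hence in $\U$, so $\lim_{n\to\U}\varphi(a_n,b_m)=0$ and $\lim_{m\to\V}\lim_{n\to\U}\varphi(a_n,b_m)=0$. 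The two iterated limits disagree, so (ii) fails; this already yields (ii) $\Rightarrow$ (i).

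For the converse I would argue $\neg$(ii) $\Rightarrow$ order property, and this is where Lemma \ref{3.8} does the work. If (ii) fails there are sequences $\la a_n\ra$, $\la b_m\ra$ and ultrafilters $\U,\V$ with $\lim_{n\to\U}\lim_{m\to\V}\varphi(a_n,b_m)=\alpha$ and $\lim_{m\to\V}\lim_{n\to\U}\varphi(a_n,b_m)=\beta$ where $\{\alpha,\beta\}=\{0,1\}$; I would take $\alpha=1,\beta=0$, the other case being symmetric by the symmetry of the order property noted in Remark \ref{remarkramsey}. Lemma \ref{3.8} then produces subsequences $\la a_{n_i}\ra$, $\la b_{m_j}\ra$ with $\lim_{\stackrel{i<j}{i\to\infty}}\varphi(a_{n_i},b_{m_j})=1$ and $\lim_{\stackrel{j<i}{j\to\infty}}\varphi(a_{n_i},b_{m_j})=0$. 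Since the values lie in $\{0,1\}$, there is a threshold past which $\varphi(a_{n_i},b_{m_j})=1$ whenever $i<j$ and $=0$ whenever $j<i$, so relabelling from that threshold gives exactly the order-property pattern off the diagonal.

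The main obstacle is the last step: converting this ``$1$ above the diagonal, $0$ below'' configuration into the literal order property, i.e.\ controlling the diagonal terms $\varphi(a_{n_i},b_{m_i})$ which the lemma says nothing about. I expect to resolve this by a pigeonhole step, passing to an infinite index set $S$ on which the diagonal values are constant: if they are $0$ the restricted subsequences already satisfy $\modelm\models\varphi(a_{n_i},b_{m_j})$ iff $i<j$, and if they are $1$ a one-step index shift between the two subsequences restores strict inequality. Either way this produces witnesses to the order property, contradicting its failure, and hence gives (i) $\Rightarrow$ (ii), completing the equivalence.
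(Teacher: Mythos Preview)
Your argument is correct and follows essentially the same route as the paper: both directions go through the order property, with Lemma \ref{3.8} doing the real work in the $\neg$(ii) $\Rightarrow$ order-property direction. In fact you are more careful than the paper on one point: the paper simply asserts that the tails past $N$ witness the order property without discussing the diagonal values $\varphi(a_{n_i},b_{m_i})$, whereas your pigeonhole-plus-shift step actually pins down the strict-inequality pattern required by the paper's definition.
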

\begin{proof}
${\sf (ii)}\rightarrow{\sf (i)}$ Assume that $\varphi$ has the order property witnessed by $\la{a_n \colon n<\om}\ra$ and $\la{b_m \colon m<\om}\ra$. Then
$\lim_{m\to\infty}\varphi(a_n,b_m)=\lim_{m\to\V}\varphi(a_n,b_m)=1$ for any fixed $n<\om$.
Similarly, $\lim_{n\to\infty}\varphi(a_n,b_m)=\lim_{n\to\U}\varphi(a_n,b_m)=0$ for any fixed $m<\om$.
So $\lim_{n\to \U}\lim_{m\to \V}\varphi(a_n,b_m)=1$ and 
$\lim_{m\to \V}\lim_{n\to \U}\varphi(a_n,b_m)=0$.
${\sf (i)}\rightarrow{\sf (ii)}$
By the lemma, there are subsequences
$\la{a_{n_i} \colon i<\om\ra}$ and
$\la{b_{m_j} \colon j<\om}\ra$ such that:
$$\lim_{\stackrel{i<j}{i\to\infty}}
\varphi(a_{n_i},b_{m_j})=1\text{ and }
\lim_{\stackrel{j<i}{j\to\infty}}
\varphi(a_{n_i},b_{m_j})=0
$$
Equivalently, there are $N_0, N_1<\om$ such that if $j>i>N_0$, then $\varphi(a_{n_i},b_{m_j})=1$ and 
if $i>j>N_1$, then 
$\varphi(a_{n_i},b_{m_j})=0$.
If $N$ is bigger than both $N_0$ and $N_1$, 
then $\la{a_{n_i} \colon i<N}\ra$ and 
$\la{b_{m_j} \colon j<N}\ra$ witness that
$\varphi$ has the order property.
If the limits have the other possible values, use Remark \ref{remarkramsey}.
\end{proof}
\qed

\section{\sf Continuous Logics for Metric Structures} 

The logics that we will consider have their motivation in overcoming some notorious limitations of first-order
logic, namely that the only possible truth values lie in $\{0,1\}$, leaving little room for the usual $\varepsilon$-play and approximations which are fundamental in analysis, and that only finite conjunctions are allowed,
restricting certain recursive definitions. We will first look at continuous logics, particularly logics for metric
structures, where the first limitation is overcome by allowing each real in $[0,1]$ as a possible truth value. We will deal with the second difficulty later when we introduce the continuous version of $\infinitary$, in which
countable conjunctions are allowed. The origins of continuous logic lie in W. Henson’s approximate
satisfaction \cite{henson}. A $[0,1]$-valued logic called \emph{continuous first-order logic} has been an active area of
research for the last decade since it was introduced by I. Ben Yaacov and A. Usvyatsov \cite{benyaacovusvyatsov} following ideas
of C. C. Chang and H. J. Keisler \cite{changkeisler}, and Henson and Iovino \cite{hensoniovino}. We will deal with variations and
generalizations introduced by Ben Yaacov and Iovino \cite{benyaacoviovino} and C. J. Eagle \cite{eagle}, following mostly the last one.

For the topologist reader who is not so interested in analysis, we emphasize that we could just consider
the usual first-order logic (and later, $\infinitary$). We then do not have to worry about continuous connectives,
non-trivial metric structures, unexpected definitions of definability, etc. We will still use non-trivial theorems of $\cp$-theory concerning ${\sf C}_{\sf p}(X,2)$, and prove non-trivial new results about definability in the
$2$-valued infinitary logic $\infinitary$.

In the discrete case, the interpretation in a model $\modelm$ of an $n$-ary relation, also called an $n$-ary predicate, was defined as a subset of $M^n$ and so it can also be regarded as a function $M^n\to 2$ in the natural way.
We see that this last option is the most natural one in the continuous case.

\begin{definition}
Assume for the sake of notational simplicity that all metric
structures have diameter 1.
\begin{itemize}
    \item [\sf (i)] If $(M,d)$ and $(N,\rho)$ are metric spaces and $f\colon M^n\to N$ is uniformly continuous, a modulus of uniform continuity of $f$ is a function 
    $\delta \colon (0,1)\cap{\rat} \to (0,1)\cap{\rat}$ such that whenever $\boldsymbol{x}=(a_1,...,a_n),\boldsymbol{y}=(b_1,...,b_n)\in M^n$ and $\epsilon \in (0,1)\cap{\rat}$, $\sup\{d(a_i,b_i) : 1\leq i \leq n\}<\delta(\varepsilon)$ implies
    $\rho(f(\boldsymbol{x}),f(\boldsymbol{y}))<\varepsilon$.
    \item [\sf (ii)] A language for metric structures is a set $L$ which consists of constants, functions with an associated
    arity and a modulus of uniform continuity, predicates with an associated arity and a modulus of uniform continuity, and a symbol
    $d$ for a metric.
    \item [\sf (iii)] An $L$-metric structure
    $\modelm$ is a metric space $(M,d^M)$ together with interpretations for each symbol in $L$:
    $c^{\modelm}\in{M}$ for each constant $c\in{L}$; 
    $f^{\modelm} \colon M^n\to M$ is an uniformly continuous function for each $n$-ary function symbol $f\in{L}$; $P^{\modelm} \colon M^n\to [0,1]$ is an uniformly continuous function for each $n$-ary predicate symbol $P\in L$.
\end{itemize}
\end{definition}
From now on, we will be mostly interested in languages for metric structures so we will write \emph{language}
instead of \emph{language for metric structures} whenever there is no room for confusion.

Now we proceed to define the continuous analogue of first-order logic in the context of metric structures.
The definition will be recursive and relies on the notion of syntactical objects called \emph{terms}. We will present
the continuous case here following \cite{eagle}; for an exposition of this in the discrete case, see \cite{tentziegler}.
\begin{definition}
Suppose $\{x_n \colon n<\om\}$ is a set of variables and fix a language $L$.
Continuous first-order logic $L$-\emph{terms}
are defined as follows: All variables and constants are $L$-\emph{terms}.
If $t_1,\dots,t_n$ are $L$-\emph{terms} and $f$ is an $n$-ary function symbol, then $f(t_1,\dots,t_n)$
is an $L$-term.
\end{definition}
The interpretation of a term in a metric structure is defined recursively in the natural way. As we discussed in Section 2, a formula $\varphi$
can also be seen as a function such that for a structure $\modelm$ and $a\in{\modelm}$, 
$\varphi(a)=1$ if and only if $\modelm\models{\varphi(a)}$.
For an $L$-sentence $\varphi$ we write 
$\modelm\models{\varphi}$ if and only if
$\varphi(a)=1$ for every $a\in{M}$.
\begin{definition}
Let $L$ be a language for metric structures.
The continuous first-order logic $L$-formulas are defined recursively as follows:
\begin{itemize}
    \item [\sf (i)] Whenever $t_1$ and $t_2$ are $L$-terms, 
    $d(t_1,t_2)$ is an $L$-formula.
    \item [\sf (ii)] If $t_1,\dots,t_n$ are $L$-terms and 
    $P$ an $n$-ary predicate symbol, then $P(t_1, \dots, t_n)$ is an $L$-formula.
    \item [\sf (iii)] If $\varphi_1, \dots, \varphi_n$ are
    $L$-formulas and $g \colon [0,1]^n\to [0,1]$
    is continuous, then $g(\varphi_1, \dots, \varphi_n)$ is an $L$-formula.
    \item [\sf (iv)] If $\varphi$ is an $L$-formula and $x$ is a variable, then $\text{sup}_x\varphi$ and 
    $\text{inf}_x\varphi$ are $L$-formulas.
\end{itemize} 
\end{definition}
\begin{remark}
{\rm
\leavevmode
\begin{itemize}
    \item [\sf (i)] $\text{sup}_x\varphi$ and 
    $\text{inf}_x\varphi$ can be thought 
    as the quantifiers analogous to $\forall$
    and $\exists$. However, they are
    not quite the same: $\text{inf}_x\varphi=1$
    means that for every $n<\om$ there is an $x$ such that $\varphi(x)>1-\frac{1}{n}$, which does not imply that there is an $x$ such that $\varphi(x)=1$.
    \item [\sf (ii)] If $\modelm$ is an $L$-structure we have $\modelm\models\text{min}\{\varphi,\psi\}$
    if and only if $\modelm\models{\varphi}$
    and $\modelm\models\psi$. Analogously, 
    $\modelm\models\text{max}\{\varphi,\psi\}$ if and only if $\modelm\models\varphi$ or $\modelm\models\psi$. We will write $\varphi\wedge\psi$ and $\varphi\vee\psi$ freely. In addition, continuous functions are also introduced as connectives. Notice that only finitary formulas are allowed.
    \item [\sf (iii)] We can express that the equality $a_1=a_2$ holds in a given $\modelm$
    by saying $\modelm\models ``1-d(a_1,a_2)"$, as this means $d^{\modelm}(a_1^{\modelm},a_2^{\modelm})=0$ where $d^{\modelm}$ is a metric so $a_1^{\modelm}=a_2^{\modelm}$.
    \item [\sf (iv)] One of the main differences that occurs when considering continuous logics is that negation is usually not available, i.e. given a formula $\varphi$,  there is not necessarily a formula $\psi$ such that $\modelm\models\varphi$ if and only if $\modelm\not\models\psi$. We will see in Section 6 that negation seems to require an infinitary disjunction to be expressed.
    \item [\sf (v)] All formulas from discrete first-order logic can easily be translated into formulas of continuous first-order logic. See [9] for a detailed discussion. In Section $6$, we will see an example of a formula that cannot be written in discrete first-order logic, but allows us to approximate estimates among norms involved in the construction of Tsirelson’s space.
    \item [\sf (vi)] In general, the relation $\in$ from the language of set theory does not belong to languages for metric structures and so it does not appear in formulas in these languages. As a consequence, an object that can be defined in ZFC, using the axiom of choice for instance, might not be definable in a given language for metric structures. The point of Gowers' problem, however, is to ask whether a space like Tsirelson's can be defined using ordinary analytic notions, which we take to mean whether it is definable in a language suitable for talking about such analytic notions, as contrasted with a language that arguably is suitable for discussing all of mathematics, but not necessarily in a pleasing, transparent way.
\end{itemize}}
\end{remark}
We restate the Compactness Theorem for continuous first-order logic and omit the proof as it is analogous to the discrete case.
\begin{theorem}
Let $L$ be a language for metric structures and $T$ a theory in continuous first-order logic. If 
$T$ is finitely satisfiable then $T$ is consistent. 
\end{theorem}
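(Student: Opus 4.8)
The plan is to keep the topological skeleton of the discrete argument but to replace its engine. In the discrete case, consistency was extracted from the compactness of a Stone space, and that compactness rested on negation: one passed from $\bigcap\{[\varphi_\alpha]\}=\e$ to a cover of $\mathbb{S}$ by the clopen sets $[\neg\varphi_\alpha]$. Since negation is in general unavailable in continuous logic, as noted above, I would instead produce a model of $T$ directly, by an ultraproduct. First I would set $I=[T]^{<\om}$ and, using finite satisfiability, fix for each $\Delta\in{I}$ a structure $\modelm_\Delta\models\Delta$. For $\Delta_0\in{I}$ put $\hat\Delta_0=\{\Delta\in{I}\colon\Delta_0\subseteq\Delta\}$; since $\hat\Delta_1\cap\hat\Delta_2\supseteq\widehat{\Delta_1\cup\Delta_2}$, the family $\{\hat\Delta_0\colon\Delta_0\in{I}\}$ is centred and extends to an ultrafilter $\U$ on $I$. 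The point of $\U$ is that each fixed sentence of $T$ holds in $\U$-almost all of the chosen structures.

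Next I would assemble the ultraproduct $\modelm=\prod_{\Delta\to\U}\modelm_\Delta$. On $\prod_\Delta M_\Delta$ define $d([x],[y])=\lim_{\Delta\to\U}d^{\modelm_\Delta}(x_\Delta,y_\Delta)$, the ultralimit existing because $[0,1]$ is compact; quotient by the relation $d=0$ and then complete the resulting metric space. I would interpret each function and predicate symbol coordinatewise through the same ultralimit. Here the uniform-continuity moduli that the language attaches to its symbols do essential work: they are preserved under the ultralimit, they guarantee that the coordinatewise definitions descend to the quotient and extend continuously to the completion, and they make the interpretations genuine uniformly continuous maps, so that $\modelm$ is a bona fide $L$-metric structure. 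The heart of the argument is then the continuous analogue of \L{}o\'s's theorem: for every $L$-formula $\varphi(\boldsymbol{x})$ and every tuple $[a]$ from $\modelm$,
$$\varphi^{\modelm}([a])=\lim_{\Delta\to\U}\varphi^{\modelm_\Delta}(a_\Delta).$$
I would prove this by induction on the complexity of $\varphi$. The atomic cases $d(t_1,t_2)$ and $P(t_1,\dots,t_n)$ hold by the very definition of the interpretations; the connective case uses that each $g\colon[0,1]^n\to[0,1]$ is continuous and hence commutes with the ultralimit; and the quantifier cases $\sup_x\varphi$ and $\inf_x\varphi$ are the delicate ones. Granting this, the conclusion is immediate: for $\varphi\in{T}$ the set $\widehat{\{\varphi\}}\in\U$, and $\modelm_\Delta\models\varphi$ whenever $\varphi\in\Delta$, so $\varphi^{\modelm_\Delta}\equiv1$ on a set in $\U$; the ultralimit is therefore $1$ and $\varphi^{\modelm}\equiv1$, i.e. $\modelm\models\varphi$. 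Hence $\modelm\models{T}$ and $T$ is consistent.

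The main obstacle I anticipate is precisely the quantifier step of \L{}o\'s's theorem, together with the completion issue it interacts with. One inequality, that $\sup_x\varphi^{\modelm}$ is bounded by the ultralimit of the $\sup_x\varphi^{\modelm_\Delta}$, is routine. The reverse requires manufacturing a witness: given $\varepsilon>0$ one chooses, for $\U$-almost every $\Delta$, an element of $M_\Delta$ whose $\varphi$-value is within $\varepsilon$ of $\sup_x\varphi^{\modelm_\Delta}$, and then shows the resulting class realizes the supremum up to $\varepsilon$ in $\modelm$. This is exactly the $\varepsilon$-approximation forced by the earlier remark that $\inf_x\varphi=1$ need not be attained by an actual witness, and it must be carried out so as to survive both the quotient and the metric completion; controlling the approximation across the completion is where the uniform-continuity moduli are indispensable.
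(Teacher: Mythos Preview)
The paper does not actually prove this theorem: it states only that the proof is ``analogous to the discrete case'' and omits it. Your ultraproduct argument is correct and is in fact the standard route in the continuous-logic literature (see for instance the references \cite{benyaacovusvyatsov} and \cite{hensoniovino} cited in the paper). All the essential ingredients are in place: the filter on $[T]^{<\om}$, the metric ultraproduct with quotient and completion, the role of the uniform-continuity moduli in making the coordinatewise interpretations descend and extend, and the inductive proof of the continuous \L{}o\'s theorem. Your identification of the quantifier step as the delicate point, and of the $\varepsilon$-witness construction as the way through it, is exactly right.

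As for comparison: the paper's discrete proof passes through the Stone space of the Boolean algebra of sentences, whose compactness is automatic, and uses negation to convert an empty intersection of basic closed sets into an open cover by their complements. You are right that this does not transplant verbatim to the continuous setting, since (as Remark~4.4(iv) notes) negation is unavailable and the basic sets $[\varphi]$ are merely closed, not clopen. One can still run a Stone-style argument by working with the space of complete $[0,1]$-valued theories under the logic topology and showing it is compact, but establishing that compactness without circularity requires essentially the same model-construction work you do. Your approach is more direct: it builds the model outright rather than locating it as a point of an abstractly compact space, and it makes transparent exactly where the metric-structure hypotheses (bounded diameter, uniform continuity with prescribed moduli, continuity of connectives) are consumed.
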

The general definition of a logic is one that varies from author to author and requires a discussion of foundational issues that might arise. We omit such a general definition in favour of an intuitive description which fits the purposes of working with compact continuous 
logics and continuous $\infinitary$ (defined below). A \emph{logic for metric structures} $\logic$ assigns to each language for metric structures $L$ the set $\sentl$ of all the
sentences together with the space of structures
$\strl$ as defined in Section 1, and a function
$\val \colon \strl\times\sentl\to [0,1]$ 
given by $\val(\modelm,\varphi)=\varphi(a)$ where
$a$ is any element of $M$; $\val$ assigns to each pair $(\modelm,\varphi)$ the truth value of $\varphi$ in $\modelm$ according to $\logic$.

In what follows, we will only be interested in languages for metric structures, regardless of the logic. As a consequence, we can regard each case as a natural generalization of discrete first-order logic: if $L$ is a language, an $L$-structure $\modelm$ is discrete if it is based on a discrete metric space $(M,d)$ and for every $L(M)$-sentence $\varphi$ we have either $\modelm\models\varphi$ or $\modelm\models 1-\varphi$. Thus, first-order logic is a special case of what we are considering here. The work of Casazza and Iovino \cite{casazzaiovino} on the undefinability of Tsirelson’s space is centred around the notion of (finitary) compact continuous logics, i.e. logics for metric structures for which $\strl$ is compact.

\section{\sf $\cp$-Theory meets Continuous Model Theory}

Given a topological space $X$, we will denote by $C_p(X)$ the space of continuous real-valued functions from $X$ with the pointwise convergence topology, i.e. the topology induced on $C_p(X)$ as a subspace of the product topology on $[0,1]^{X}$.
For our purposes, we will only be interested in sets of functions from $X$ to $[0,1]$ and their closures. The properties we will deal with are inherited by closed subspaces of 
$C_p(X)$ and so no difficulty will arise if one considers the closed subspace 
$C_p(X,[0,1])$ of continuous functions from
$X$ to $[0,1]$ instead of $C_p(X)$.
Unless otherwise specified, we suppose that we are working with an
arbitrary logic for metric structures in this section.

Some definitions and statements differ from the original ones in \cite{casazzaiovino} as those are aimed at the compact case on which their work is centred. Surprisingly, the discrete version of what is proved in \cite{casazzaiovino} or here for
continuous logic is not much easier to prove; the reason for this is that
$C_p(X,2)$ is not much simpler than 
$C_p(X, [0,1])$.

We first introduce the formulas in two variables which will occupy our attention.
\begin{definition}
\leavevmode
\begin{itemize}
    \item  [\sf (i)] Let $L$ be a language. $L'\supseteq{L}$ is a language for pairs of structures from $L$ if $L'$
    includes two disjoint copies $L_0$ and $L_1$ of $L$ and there is a map 
    $\strl\times\strl\to \strlprime$ which assigns to every pair of $L$-structures
    $(\modelm,\modeln)$ an $L'$-structure $\la{\modelm,\modeln}\ra$, where $\modelm$ is an $L_0$-structure and $\modeln$ is an $L_1$-structure. We say that $L'$ is a language for pairs of structures if it is so for some $L$.
    \item [\sf (ii)] If $L'$ is a language for pairs of structures from $L$ and $X, Y$ are function symbols from $L$, we say that a formula $\varphi(X,Y)$ is a formula for pairs of structures from $L$ if $(\modelm,\modeln)\mapsto \varphi(X^{\modelm},Y^{\modeln})=
    \val(\varphi(X,Y),\la{\modelm,\modeln}\ra)$ is
    separately continuous on $\strl\times\strl$.
    For notational simplicity, we will write
    $\varphi(\modelm,\modeln)$ instead of 
    $\val(\varphi(X,Y),\la{\modelm,\modeln}\ra)$.
\end{itemize}
\end{definition}
The definition of formulas for pairs of structures is useful for comparing norms on the same metric space. Consider for example a set $\C$ of structures which are normed spaces $(c_{00},\nnnorm)$ based on $c_{00}$, i.e. the structures $(c_{00},\nnnorm_{l_1}, \nnnorm, e_0, e_1, ...)$ where $\nnnorm$ is a function symbol in $L$ for an arbitrary norm and $\{ e_n : n<\omega\}$ is the standard vector basis of $c_{00}$.
Let $L'$ be a language including two disjoint copies of $L$; then two different $L$-structures 
$(c_{00}, \nnnorm_1)$ and $(c_{00}, \nnnorm_2)$ can be coded as the single $L'$-structure 
$(c_{00}, \nnnorm_1, \nnnorm_2)$.
In this section, ``language" will mean ``language for pairs of structures". For the purposes of \cite{casazzaiovino},
$X$ and $Y$ will always represent function symbols for norms.
\begin{definition}
Let $L$ be a language and $\varphi$ a formula
for pairs of structures
\begin{itemize}
    \item [\sf (i)]The left $\varphi$-type of $\modelm$ is the function $\ltpfim\colon \strl\to [0, 1]$
    given by $\ltpfim(\modeln)=
    \varphi(\modelm,\modeln)$. The space of left 
    $\varphi$-types $\slfi$ is the closure of 
    $\{\ltpfim \colon \modelm\in{\strl}\}$ in $C_p(\strl)$. If $\C$ is a subset of $\strl$ then $\slfi[\C]$ is the closure of
    $\{\ltpfim \colon \modelm\in \C\}$ in $C_p(\C)$, called ``the space of left $\varphi$-types over $\C$".
    \item [\sf (ii)]The right $\varphi$-type of $\modeln$ is the function $\rtpfin\colon \strl\to [0, 1]$ given by $\rtpfin(\modeln)=
    \varphi(\modelm,\modeln)$. The spaces $\srfi$ 
    and $\srfi[\C]$ are defined analogously.
\end{itemize}
\end{definition}
\begin{remark}
{\rm
As in the discrete case, one can recover the left
$\varphi$-type in the classical sense by considering $(\ltpfim)^{-1}\{1\}$.}
\end{remark}
\begin{proposition}
If $\logic$ is a compact logic then $\slfi$ and $\srfi$ are compact.
\end{proposition}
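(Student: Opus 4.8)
The plan is to realise $\slfi$ as a continuous image of the compact space $\strl$, so that it is automatically compact (and even closed), making the closure operation in its definition vacuous.

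First I would verify that the family $D=\{\ltpfim \colon \modelm\in\strl\}$ genuinely lives in $C_p(\strl)$, i.e.\ that each $\ltpfim$ is a continuous function on $\strl$. This is immediate from one half of the separate continuity of $\varphi$: fixing $\modelm$, the assignment $\modeln\mapsto\varphi(\modelm,\modeln)=\ltpfim(\modeln)$ is continuous, so $\ltpfim\in C_p(\strl)$ and $D\subseteq C_p(\strl)$ as required.

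The heart of the argument is to consider the evaluation map $e\colon\strl\to C_p(\strl)$ given by $e(\modelm)=\ltpfim$ and to show it is continuous. Since $C_p(\strl)$ carries the topology of pointwise convergence, it suffices to test continuity against the subbasic open sets $\{f \colon f(\modeln)\in U\}$ for $\modeln\in\strl$ and $U\subseteq[0,1]$ open. Here $e^{-1}(\{f \colon f(\modeln)\in U\})=\{\modelm \colon \varphi(\modelm,\modeln)\in U\}$, which is open precisely because $\varphi$ is continuous in its \emph{first} argument — the other half of separate continuity. Hence $e$ is continuous. The rest is then soft: because $\logic$ is compact, $\strl$ is compact, so $D=e(\strl)$ is a continuous image of a compact space, hence compact; and since $C_p(\strl)$ is Hausdorff (being a subspace of the product $[0,1]^{\strl}$), the compact set $D$ is already closed. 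Therefore $\slfi=\overline{D}=D$ is compact. The case of $\srfi$ is verbatim the same after exchanging the two coordinates, defining $\modeln\mapsto\rtpfin$ and using continuity of $\varphi$ in its second argument.

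I do not expect a genuine obstacle here, as this is essentially a soft general-topology argument; the one conceptual point worth stating carefully is that separate continuity of $\varphi$ is exactly what makes $e$ continuous into the pointwise topology, with each of the two halves of separate continuity playing a distinct role (one to land $D$ inside $C_p(\strl)$, the other to make $e$ continuous). Once this is observed, no $\varepsilon$-estimate or limit computation is needed: in particular there are no discontinuous pointwise limit points to contend with, since $D$ is closed before any closure is taken.
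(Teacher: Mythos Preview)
Your argument is correct and is, if anything, the cleaner route: you observe that $e\colon\strl\to C_p(\strl)$, $\modelm\mapsto\ltpfim$, is continuous by separate continuity in the first variable, so $D=e(\strl)$ is a continuous image of the compact $\strl$, hence compact and (by Hausdorffness of $C_p(\strl)$) already closed, giving $\slfi=D$.

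The paper takes a genuinely different path. Rather than exhibiting $D$ as a continuous image of a compact space, it shows directly that $D$ is closed in $[0,1]^{\strl}$ by chasing limit points with ultrafilters: given a limit point $f$ of $D$, it produces a $\kappa$-sequence $\la\modelm_\alpha\ra$ and an ultrafilter $\U$ with $\lim_{\alpha\to\U}\ltp_{\varphi,\modelm_\alpha}=f$, then uses compactness of $\strl$ to find $\modelm=\lim_{\alpha\to\U}\modelm_\alpha$, and finally uses separate continuity in the first variable to conclude $f=\ltpfim\in D$. Your approach buys brevity and avoids ultrafilters entirely; the paper's approach buys consistency with the ultralimit machinery used throughout the later sections, and it makes visible the model-theoretic content that is exploited immediately afterward --- namely that every left $\varphi$-type is \emph{realized} by some structure $\modelm$. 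Your proof yields the same conclusion (since you show $\slfi=D$), but the paper's argument displays the realizing structure explicitly as an ultralimit.
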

\begin{proof}
If $f$ is a limit point of $\slfi$, then there is a cardinal $\kappa$, an ultrafilter $\U$ over $\kappa$ and a sequence $\la{\modelm_{\alpha} \colon\alpha<\kappa}\ra$ in $\strl$ such that $\lim_{\alpha\to\U}ltp_{\varphi, \modelm_{\alpha}}=f$. To see this, let $\kappa$ be the cardinality of the neighbourhood filter $\mathcal{F}$ at $f$, take an ultrafilter $\U$ extending $\mathcal{F}$ and pick a point in each member of $\mathcal{F}$. The trace of $\U$ on this $\kappa$-sequence $\U$-converges to $f$. Since $\strl$ is compact and $\varphi$ is separately continuous, there is an $\modelm\in{\strl}$ such that $\lim_{\alpha\to\U}\modelm_{\alpha}=\modelm$ and for any $\modeln\in{\strl}$,
$\lim_{\alpha\to{\U}}
\varphi(\modelm_{\alpha},\modeln)=
\varphi(\modelm,\modeln)=\ltpfim\in\slfi$.
Then $\slfi$ is a closed subset of 
$[0, 1]^{\strl}$.
\end{proof}
\qed
\begin{definition}
Let $\C$ be a subset of $\strl$ and suppose $\{\modelm_{\alpha} \colon \alpha<\kappa\}\subseteq{\C}$ is such that
$t=\ltpfim$, then we say that $\modelm$ is a realization of $t$ in $\C$ and $t$ is a left $\varphi$-type of $\modelm$ over $\C$.
\end{definition}
We have just showed that the classical result stating that, when the logic is compact, for every type there
is a model in which the type is realized, holds in the continuous case. Notice that
$\strl$ can be replaced
by any closed subset and the result still holds with the same proof.

Now we are ready to introduce the essential concept of stability in continuous logics:
\begin{definition}
Let $\C\subseteq{\strl}$. A formula  for pairs of structures $\varphi$ is stable on $\C$ if and only if whenever $\la{\modelm_{i} \colon i<\omega}\ra$ and 
$\la{\modeln_{\j} \colon j<\omega}\ra$
are sequences in $\C$ and $\U$ and $\V$ ultrafilters on $\omega$, respectively, we have
$$\lim_{i\to\U}
\lim_{j\to\V}
\varphi(\modelm_{i},\modeln_{j})=
\lim_{j\to\V}
\lim_{i\to\U}
\varphi(\modelm_{i},\modeln_{j}).$$
\end{definition}
Now we can state a $\cp$-theoretic result of V. Pt\'ak \cite{ptak} (see also S. Todorcevic \cite{todorcevic}) and the respective model-theoretic version which gives an important characterization of stability:
\begin{theorem}[Pt\'ak]
If $X$ is compact then a pointwise bounded
$A\subseteq{C_p(X)}$ has a compact closure
if and only if it satisfies the following double limit condition: whenever
$\la{f_n \colon n<\om}\ra$ and $\la{x_n \colon n<\om}\ra$ are sequences in $A$ and $X$ respectively, the double limits 
$\lim_{n\to\infty}\lim_{m\to\infty} f_n(x_m)$ and 
$\lim_{m\to\infty}\lim_{n\to\infty} f_n(x_m)$ are equal whenever they both exist.
\end{theorem}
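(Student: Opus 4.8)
\noindent The plan is to establish the two implications separately, after first converting the statement about compactness of $\bar A$ into a statement about continuity of pointwise cluster functions. Since $A$ is pointwise bounded, Tychonoff's theorem guarantees that the closure $K$ of $A$ taken in the product space $\real^X$ is compact, and the closure $\bar A$ of $A$ in $C_p(X)$ is exactly $K\cap C(X)$. From this one sees that $\bar A$ is compact if and only if $K\subseteq C(X)$, i.e.\ if and only if every pointwise limit function of $A$ is continuous. I would record this equivalence at the outset, since it lets me replace ``$\bar A$ compact'' by ``every cluster function of $A$ is continuous'' in both directions.

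For the implication \emph{compactness $\Rightarrow$ double limit}, I would use cluster points. Assume $\bar A$ is compact and let $\la{f_n\colon n<\om}\ra$ in $A$ and $\la{x_m\colon m<\om}\ra$ in $X$ be such that both iterated limits exist, say $\lim_n\lim_m f_n(x_m)=a$ and $\lim_m\lim_n f_n(x_m)=b$. Compactness of $\bar A$ yields a cluster point $f\in\bar A$ of $(f_n)$, which is continuous because it lies in $C_p(X)$; compactness of $X$ yields a cluster point $x$ of $(x_m)$. The engine of the argument is that in a Hausdorff space a convergent sequence has its limit as its unique cluster value, while evaluation at a point and application of a continuous map both send cluster values to cluster values. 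For fixed $n$, continuity of $f_n$ makes $f_n(x)$ a cluster value of the convergent sequence $(f_n(x_m))_m$, so $f_n(x)=\lim_m f_n(x_m)$; letting $n\to\infty$ and using that $f(x)$ is a cluster value of $(f_n(x))_n$ gives $f(x)=a$. Symmetrically $f(x_m)=\lim_n f_n(x_m)$, and continuity of $f$ makes $f(x)$ a cluster value of $(f(x_m))_m$, giving $f(x)=b$. Hence $a=f(x)=b$.

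For the converse I would argue the contrapositive: if $\bar A$ is not compact then the double limit condition fails. By the reduction there is $g\in K$ discontinuous at some $x_0\in X$; fix $\varepsilon>0$ so that every neighbourhood of $x_0$ meets $\{y\colon |g(y)-g(x_0)|\ge\varepsilon\}$. I would then construct $\la{f_n}\ra\subseteq A$ and $\la{x_m}\ra\subseteq X$ by an interlacing induction. At stage $n$, having chosen $x_1,\dots,x_{n-1}$, I use that $A$ is dense in $K$ to pick $f_n\in A$ with $|f_n(x_i)-g(x_i)|<1/n$ for all $i$ with $0\le i<n$; then, since the finitely many continuous functions $f_1,\dots,f_n$ vary by less than $1/n$ on a suitable neighbourhood $U$ of $x_0$, I use the discontinuity of $g$ to choose $x_n\in U$ with $|g(x_n)-g(x_0)|\ge\varepsilon$ and $|f_i(x_n)-f_i(x_0)|<1/n$ for all $i$ with $1\le i\le n$.

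Finally I would read off the two iterated limits. For fixed $n$ the bounds $|f_n(x_m)-f_n(x_0)|<1/m$ (valid for $m\ge n$) give $\lim_m f_n(x_m)=f_n(x_0)$, and then $\lim_n f_n(x_0)=g(x_0)$, so the first iterated limit equals $g(x_0)$. For fixed $m$ the bounds $|f_n(x_m)-g(x_m)|<1/n$ (valid for $n>m$) give $\lim_n f_n(x_m)=g(x_m)$; since all values lie in $[0,1]$ I pass to a subsequence of $(x_m)$ along which $g(x_m)$ converges to some $L$, and $|L-g(x_0)|\ge\varepsilon$ then forces the second iterated limit to be $L\ne g(x_0)$. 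Both iterated limits therefore exist and differ, contradicting the double limit condition. \emph{The main obstacle is precisely this interlacing construction in the general compact setting}: because $X$ need not be metrizable, $x_0$ may have no countable neighbourhood base and one cannot force $x_m\to x_0$; the remedy is to control the \emph{values} $f_i(x_n)$ directly through continuity of the finitely many functions chosen so far, and to use the $[0,1]$-bound to extract a convergent subsequence of $(g(x_m))_m$.
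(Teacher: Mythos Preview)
The paper does not actually prove Pt\'ak's theorem; it merely states it with citations to \cite{ptak} and \cite{todorcevic} and then uses it as a black box. So there is no ``paper's own proof'' to compare against here. Your argument is essentially the classical one and is correct in the setting the paper cares about (functions into $[0,1]$). A few comments:

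\smallskip
\noindent\textbf{Forward direction.} Your cluster-point argument is clean and correct: continuous maps send cluster values to cluster values, and a convergent sequence in a Hausdorff space has a unique cluster value. This pins both iterated limits to $f(x)$.

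\smallskip
\noindent\textbf{Backward direction.} The interlacing construction is the standard device and you have correctly identified the real difficulty: $X$ is compact but need not be first countable, so you cannot force $x_m\to x_0$; instead you control finitely many values at each stage. This is exactly right. The paper's own related argument (Theorem~6.1, for first countable weakly Grothendieck $X$) takes the easier route of using a countable local base to make $x_m\to x_0$ and ultrafilters to force convergence---a route unavailable here.

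\smallskip
\noindent\textbf{One gap to flag.} You write ``since all values lie in $[0,1]$'' to extract a convergent subsequence of $(g(x_m))_m$. In the paper's context this is fine (it explicitly restricts to $[0,1]$-valued functions), but the theorem as stated assumes only \emph{pointwise} boundedness of $A$, which does not force $g$ to be bounded, so $(g(x_m))$ could in principle be unbounded and admit no convergent subsequence. The usual formulations of Pt\'ak's criterion assume uniform boundedness (or work in $C_p(X,[0,1])$), under which your argument goes through without change; alternatively one can refine the choice of $x_n$ to keep $g(x_n)$ in a fixed bounded band. Either way this is a minor patch, not a structural problem.
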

\begin{theorem}
Let $L$ be a language for pairs of structures, $\varphi$ an $L$-formula
for pairs of structures and $\C\subseteq{\strl}$ be compact.
Then, the following are equivalent:
\begin{itemize}
    \item [\sf (i)] $\varphi$ is stable on $\C$.
    \item [\sf (ii)] There is a separately continuous function $F \colon \slfi[\C]\times\srfi[\C]\to [0, 1]$
    such that $F(\ltpfim,\rtpfin)=\varphi(\modelm,\modeln)$ for every $\modelm, \modeln\in{\strl}$.
\end{itemize}
\end{theorem}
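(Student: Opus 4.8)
The plan is to run both implications through Pt\'ak's double limit condition for the pointwise bounded family $A=\{\ltpfim : \modelm\in\C\}\subseteq C_p(\C)$, using as a preliminary observation that, since $\varphi$ is separately continuous and $\C$ is compact, each $\ltpfim$ is continuous on $\C$ and the evaluation map $\modelm\mapsto\ltpfim$ is continuous from $\C$ into $C_p(\C)$. First I would upgrade the realization argument already used in the compact-logic Proposition: the continuous image of the compact set $\C$ is compact, hence closed in the Hausdorff space $C_p(\C)$, so $\slfi[\C]=\{\ltpfim : \modelm\in\C\}$, and symmetrically $\srfi[\C]=\{\rtpfin : \modeln\in\C\}$; in particular every type over $\C$ is realized in $\C$ and both type spaces are compact. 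I would also record that the ultrafilter formulation of stability coincides with Pt\'ak's ordinary-limit double limit condition: Lemma~\ref{3.8} extracts from any pair of iterated ultrafilter limits subsequences whose ordinary triangular limits realize those same values, while conversely ordinary iterated limits, when they exist, are computed by any free ultrafilter.

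For ${\sf (i)}\Rightarrow{\sf (ii)}$, I would define $F$ by $F(\ltpfim,\rtpfin)=\varphi(\modelm,\modeln)$; by realization this assigns a value to every point of $\slfi[\C]\times\srfi[\C]$. Well-definedness is direct: if $\ltpfim=ltp_{\varphi,\modelm'}$ and $\rtpfin=rtp_{\varphi,\modeln'}$ as functions on $\C$, then evaluating the first equality at $\modeln$ and the second at $\modelm'$ gives $\varphi(\modelm,\modeln)=\varphi(\modelm',\modeln)=\varphi(\modelm',\modeln')$. Separate continuity is then immediate from the evaluation description: for fixed $t=\rtpfin$ the map $s\mapsto F(s,t)$ equals $s\mapsto s(\modeln)$, which is continuous on $\slfi[\C]\subseteq C_p(\C)$ because $\modeln\in\C$, and symmetrically in the other variable. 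The substantive point, and the one that generalizes beyond the compact case, is that stability is precisely what makes the alternative, iterated-limit description of $F$ unambiguous: writing $s=\lim_{i\to\U}ltp_{\varphi,\modelm_i}$ and $t=\lim_{j\to\V}rtp_{\varphi,\modeln_j}$, the double limit condition forces $\lim_{i\to\U}\lim_{j\to\V}\varphi(\modelm_i,\modeln_j)=\lim_{j\to\V}\lim_{i\to\U}\varphi(\modelm_i,\modeln_j)$, so that this common value is unambiguously $F(s,t)=s(\modeln)=t(\modelm)$.

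For ${\sf (ii)}\Rightarrow{\sf (i)}$ I would simply push the iterated limits through the separately continuous $F$. Given sequences $\la\modelm_i:i<\om\ra$ and $\la\modeln_j:j<\om\ra$ in $\C$ and ultrafilters $\U,\V$, compactness of the type spaces yields $s=\lim_{i\to\U}ltp_{\varphi,\modelm_i}\in\slfi[\C]$ and $t=\lim_{j\to\V}rtp_{\varphi,\modeln_j}\in\srfi[\C]$. Writing $\varphi(\modelm_i,\modeln_j)=F(ltp_{\varphi,\modelm_i},rtp_{\varphi,\modeln_j})$ and invoking continuity of $F$ in each variable, the inner limit in $j$ along $\V$ collapses to $F(ltp_{\varphi,\modelm_i},t)$ and then the limit in $i$ along $\U$ to $F(s,t)$; the reverse order gives the same $F(s,t)$, so the two iterated limits coincide and $\varphi$ is stable on $\C$.

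The hard part will be the well-definedness and separate continuity of $F$ in ${\sf (i)}\Rightarrow{\sf (ii)}$, or more precisely verifying that the evaluation description and the iterated-limit description agree: this is exactly where stability, equivalently the absence of the order property, is indispensable, since it is what excludes the configuration $\varphi(\modelm_i,\modeln_j)\to 1$ along $i<j$ and $\to 0$ along $j<i$ that would make the two iterated limits disagree and hence destroy separate continuity. The second technical point to handle with care is the reconciliation of the two limit formulations via Lemma~\ref{3.8}, so that Pt\'ak's ordinary double limit condition may be used interchangeably with the ultrafilter definition of stability.
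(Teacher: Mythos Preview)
Your proof is correct and in fact more direct than the paper's. The paper derives ${\sf (i)}\Rightarrow{\sf (ii)}$ via Pt\'ak's theorem---stability gives the ordinary double limit condition for $A=\{\ltpfim:\modelm\in\C\}\subseteq C_p(\C)$, whence $\slfi[\C]=\bar A$ is compact---and declares the converse immediate. You instead observe that, since $\C$ is compact and $\modelm\mapsto\ltpfim$ is continuous into $C_p(\C)$ by separate continuity of $\varphi$, the image $A$ is already compact, hence closed, so $\slfi[\C]=A$ and every type over $\C$ is realized in $\C$ with no appeal to stability or Pt\'ak at all; separate continuity of $F$ then reduces to pointwise evaluation at a realizing structure. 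Your route thus makes explicit what the paper's proof leaves implicit: when $\C$ is compact, both {\sf (i)} and {\sf (ii)} hold automatically (any separately continuous $\varphi$ on a product of compacta satisfies the iterated-ultralimit interchange, exactly by your ${\sf (ii)}\Rightarrow{\sf (i)}$ computation), so the equivalence is essentially vacuous in this setting. What you gain is a clean diagnosis of where stability is genuinely needed---only once compactness of $\C$ is dropped, which is precisely the business of Section~6; you rightly flag this as the ``substantive point''. What the paper's detour through Pt\'ak buys is a rehearsal of the double-limit machinery that will do real work in that non-compact generalization.
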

\begin{proof}
${\sf (ii)}\rightarrow {\sf (i)}$ is immediate.
${\sf (i)}\rightarrow {\sf (ii)}$: 
We have $A=\{\ltpfim \colon \modelm\in{\C}\}\subseteq{C_p(\strl)}$.
Also, $\slfi[\C]=\bar{A}\cap{C_p(\strl)}$.
Clearly, stability implies the
double limit condition of Pták’s theorem and so $\slfi[\C]$ is compact. By symmetry, 
$F(\ltpfim,\rtpfin)=\varphi(\modelm, \modeln)$ is separately continuous. 
\end{proof}
\qed

Now we can introduce the notion of definability:
\begin{definition}
Let $L$ be a language for pairs of structures, $\varphi$ be a formula
for pairs of structures and $\C\subseteq{\strl}$. A function
$\tau \colon \srfi[\C]\to [0, 1]$ is a left
global $\varphi$-type over $\C$ if there is
a sequence $\la{\modelm_{\alpha} \colon\alpha<\kappa}\ra$ in $\C$ such that for every type $t\in\srfi$, say
$t=\lim_{\beta\to \V}rtp_{\varphi,\modeln_{\beta}}$, we have
$\tau(t)=\lim_{\alpha\to \U}\lim_{\beta\to \V}\phi(\modelm_{\alpha},
\modeln_{\beta})$.
We say that $\tau$ is explicitly definable
if it is continuous.
If a left $\varphi$-type $p\in{\slfi[\C]}$
is given by $p=\lim_{\alpha\to\U}ltp_{\varphi,\modelm_{\alpha}}$, we say that $p$ is explicitly definable if $\tau \colon \srfi\to [0, 1]$ given by 
$\tau(t)=
\lim_{\alpha\to \U}\lim_{\beta\to \V}
\varphi(\modelm_{\alpha},\modeln_{\beta})$ yields an explicitly definable left
global $\varphi$-type.
\end{definition}
The previous definition is sound because the continuity of $\tau$ makes it into an allowable formula which
determines, as in the discrete case, which formulas belong to the type $t$, by taking the pre-image of 1 as usual.

Now Pt\'ak’s theorem (or its model-theoretic version) can be restated as the equivalence between stability and definability in compact continuous logics, following 
\cite{casazzaiovino}:

\begin{theorem}
Let $\logic$ be a compact logic, $L$ a language for pairs of structures, $\C\subseteq{\strl}$  compact and $\varphi$
a formula for pairs of structures. Then, the following are equivalent:
\begin{itemize}
    \item [\sf (i)] $\varphi$ is stable on $\C$.
    \item [\sf (ii)] If $\tau$ is a global left (or right) $\varphi$-type over $\C$, then $\tau$ is explicitly definable.
    \item [\sf (iii)] For every $L$-structure $\modelm$, the left and right $\varphi$-types of $\modelm$ are explicitly definable. 
\end{itemize}
\end{theorem}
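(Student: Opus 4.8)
The plan is to route everything through the preceding theorem, which via Pt\'ak's theorem (with $X=\C$) identifies stability of $\varphi$ on the compact set $\C$ with the existence of a separately continuous $F\colon\slfi[\C]\times\srfi[\C]\to[0,1]$ satisfying $F(ltp_{\varphi,\modelm},rtp_{\varphi,\modeln})=\varphi(\modelm,\modeln)$, and in particular shows that stability forces $\slfi[\C]$ and $\srfi[\C]$ to be compact. The observation that organizes the proof is that the global left $\varphi$-type $\tau$ attached to a sequence $\la\modelm_\alpha\colon\alpha<\kappa\ra$ in $\C$ and an ultrafilter $\U$ is given by $\tau(s)=\lim_{\alpha\to\U}s(\modelm_\alpha)$ for every $s\in\srfi[\C]$; that is, $\tau$ is the pointwise $\U$-limit of the evaluation maps $s\mapsto s(\modelm_\alpha)$. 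Each such evaluation is continuous, so \emph{explicitly definable} (continuity of $\tau$) asks precisely that this $\U$-limit preserve continuity, a Grothendieck-style commutation-of-double-limits condition.

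For (i)$\Rightarrow$(ii) I would assume stability and fix such a $\tau$; put $p=\lim_{\alpha\to\U}ltp_{\varphi,\modelm_\alpha}$, which is a genuine point of the now-compact $\slfi[\C]$. For $t=\lim_{\beta\to\V}rtp_{\varphi,\modeln_\beta}$ I compute $\tau(t)=\lim_{\alpha\to\U}\lim_{\beta\to\V}F(ltp_{\varphi,\modelm_\alpha},rtp_{\varphi,\modeln_\beta})$ and collapse the two limits one at a time using the separate continuity of $F$, arriving at $\tau(t)=F(p,t)$; since $F(p,\cdot)$ is continuous, $\tau$ is explicitly definable, and the right-type case is identical with the coordinates exchanged. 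For (ii)$\Rightarrow$(iii) I note that the left (resp.\ right) $\varphi$-type of a structure $\modelm$ realizing a type over $\C$ is, by the very definition given before the theorem, the global type determined by any sequence in $\C$ whose $\U$-limit of left types equals $ltp_{\varphi,\modelm}$; such a sequence exists precisely because $\modelm$ realizes a type over $\C$, so (iii) is the restriction of (ii) to those global types that are realized, and the implication is immediate.

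For (iii)$\Rightarrow$(i) I argue the contrapositive. If $\varphi$ is unstable on $\C$, choose witnessing sequences $\la\modelm_i\colon i<\om\ra$, $\la\modeln_j\colon j<\om\ra$ in $\C$ and ultrafilters $\U,\V$ with $\lim_{i\to\U}\lim_{j\to\V}\varphi(\modelm_i,\modeln_j)=a\neq b=\lim_{j\to\V}\lim_{i\to\U}\varphi(\modelm_i,\modeln_j)$. Compactness of the logic lets me realize $p=\lim_{i\to\U}ltp_{\varphi,\modelm_i}$ as $ltp_{\varphi,\modelm}$ for an actual structure $\modelm$, so under (iii) the associated map $\tau(s)=\lim_{i\to\U}s(\modelm_i)$ is continuous. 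But $\tau(rtp_{\varphi,\modeln_j})=\lim_{i\to\U}\varphi(\modelm_i,\modeln_j)$, while $rtp_{\varphi,\modeln_j}\to t:=\lim_{\beta\to\V}rtp_{\varphi,\modeln_\beta}$ along $\V$, so continuity would give $a=\tau(t)=\lim_{j\to\V}\tau(rtp_{\varphi,\modeln_j})=b$, contradicting $a\neq b$. Hence the left type of $\modelm$ is not explicitly definable and (iii) fails.

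The step I expect to be most delicate is (iii)$\Rightarrow$(i), and within it the passage from a limit type to a realizing structure: keeping straight that $\srfi[\C]$ lives in $C_p(\C)$, verifying that $\tau$ is well defined through the clean formula $\tau(s)=\lim_{i\to\U}s(\modelm_i)$, and recognizing that its continuity coincides with the commuted double limit is the technical heart of the argument. It is exactly here that compactness of the logic is indispensable, since it is what upgrades statement (ii) about global types into statement (iii) about honest structures; once this is secured, Pt\'ak's theorem supplies all the remaining topology.
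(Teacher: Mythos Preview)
Your argument is correct and is precisely the kind of fleshing-out the paper invites: the paper does not give its own proof of this theorem but presents it as a restatement of Pt\'ak's theorem (equivalently, of the preceding result about the separately continuous $F$), following \cite{casazzaiovino}. Your route through that $F$ for (i)$\Rightarrow$(ii), your reading of (iii) as the realized-type instance of (ii), and your contrapositive for (iii)$\Rightarrow$(i) using Proposition~5.4 to realize $p$ and the clean formula $\tau(s)=\lim_{i\to\U}s(\modelm_i)$ are exactly what is needed, and you correctly flag that compactness of $\logic$ (hence of $\slfi[\C]$ and $\srfi[\C]$, so that $t\in\srfi[\C]$) is where the hypothesis enters.
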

This model-theoretic machinery that is built upon $\cp$-theoretic results constitutes the grounds on which Casazza and Iovino proved the undefinability of Tsirelson’s space from sets of Banach spaces including $l_p$ or $c_0$ in compact continuous logics. Most of the remainder of the proof of this fact involves analysis rather
than model theory or topology so the interested reader is referred to 
\cite{casazzaiovino}.
Pt\'ak’s theorem relies on a well-known theorem of Grothendieck which states that, when $X$ is countably
compact, the closure of any $A\subseteq C_p(X)$  which is countably compact in $C_p(X)$ (every infinite subset of $A$ has a limit point in $C_p(X)$) is compact. The spaces for which this property holds, i.e. the closure of any subspace which is countably compact in $C_p(X)$ is compact,
will be called \textit{weakly Grothendieck} spaces; when $C_p(X)$ has this property hereditarily, we say that $X$ is a \textit{Grothendieck} space. Grothendieck spaces have turned out to be relevant in work in progress concerning definability, allowing one to prove stronger results, but they are not involved in what we present here and so we work only with weakly Grothendieck spaces. See \cite{arhangelskii2} for a more detailed discussion on variations of Grothendieck spaces.
Notice that $C_p(X, [0, 1])$ is a closed subspace of $C_p(X)$ and so if $X$ is a (weakly) Grothendieck space, any closed
countably compact subset of 
$C_p(X, [0, 1])$ is compact. Similarly,
$C_p(X, 2)$ is a closed subspace of 
$C_p(X, [0,1])$ and so one may deduce similar conclusions.

The main problem to be overcome in order to find an analogue of the previous definability theorem is to
find conditions on a space $X$ so that
$\bar{A}\cap{C_p(X)}$ being compact is equivalent to a double (ultra)limit
condition as in Pt\'ak’s theorem. A natural place to start is to consider weakly Grothendieck spaces in order to test
statements such as the ones in \cite{casazzaiovino} in the context of logics for which the space of types is weakly Grothendieck,
thus generalizing the compact case.
\bigbreak

{\bf \sf Problem 1:} What conditions are to be satisfied by $X$ so that it is a weakly Grothendieck space?
\bigbreak

There is a variety of disparate conditions which imply $X$ is a weakly Grothendieck space: A. V. Arhangel’ski\u\i \;  showed in \cite{arhangelskii2} that all Lindel\"of $\Sigma$-spaces (i.e. continuous images of spaces that can be
perfectly mapped onto separable metrizable spaces) are weakly Grothendieck, and V. Tkachuk \cite{tkachuk} proved that
every $\sigma$-compact space is weakly Grothendieck. We collect some of these results in the following theorem:
\begin{theorem}\label{grot}
A space $X$ is weakly Grothendieck if it satisfies any of the following conditions:
\begin{itemize}
    \item [\sf (i)] $X$ is countably compact.
    \item [\sf (ii)] $X$ is $\sigma$-compact.
    \item [\sf (iii)] $X$ is Lindel\"of $\Sigma$.
\end{itemize}
\end{theorem}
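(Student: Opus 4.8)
The plan is to unify the three cases through the notion of \emph{angelicity}: a regular space $Z$ is angelic if every relatively countably compact subset of $Z$ is relatively compact (and, moreover, the closure of a relatively compact set is already its sequential closure). By the very wording of the definition above, ``$X$ is weakly Grothendieck'' says precisely that every relatively countably compact $A\subseteq C_p(X)$ --- that is, every $A$ whose infinite subsets all have a cluster point in $C_p(X)$ --- has compact closure, and this is exactly the principal clause of angelicity for $C_p(X)$. Thus in every case it is enough to prove that $C_p(X)$ is angelic. In case (i), where $X$ is countably compact, this principal clause is nothing but Grothendieck's theorem recalled immediately before the statement, so (i) requires no further argument.

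For (iii) the key input is the theorem --- due to Orihuela in the wider setting of web-compact spaces, and recorded for the Lindel\"of $\Sigma$ case in \cite{arhangelskii2} --- that $C_p(X)$ is angelic whenever $X$ is Lindel\"of $\Sigma$. Granting this, the reduction of the first paragraph immediately yields that every Lindel\"of $\Sigma$-space is weakly Grothendieck.

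Case (ii) then follows at no extra cost, since every $\sigma$-compact space is Lindel\"of $\Sigma$: each compact space is Lindel\"of $\Sigma$ (it maps perfectly onto a point, which is separable metrizable), and the Lindel\"of $\Sigma$ property is preserved under countable unions, so a countable union of compact sets is Lindel\"of $\Sigma$. Hence any $\sigma$-compact $X$ is subsumed under case (iii); alternatively one may invoke Tkachuk \cite{tkachuk} directly. Note that (i) genuinely requires its own treatment, as a countably compact space (such as $\omega_1$) need not be Lindel\"of $\Sigma$.

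The single genuine obstacle is the angelicity of $C_p(X)$ for Lindel\"of $\Sigma$-spaces used in (iii). In contrast to the soft reductions in (i) and (ii), this is a substantial theorem: its proof represents $X$ by a countable network modulo a cover of $X$ by compact sets and then runs a diagonal sequential-extraction argument against that network. In a survey I would cite this result rather than reproduce its proof.
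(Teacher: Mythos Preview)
Your proposal is correct and follows essentially the same decomposition as the paper: (i) is Grothendieck's theorem, (ii) reduces to (iii) via the implication $\sigma$-compact $\Rightarrow$ Lindel\"of $\Sigma$, and (iii) is cited from the literature rather than reproved. The only minor divergence is the black box invoked for (iii): you appeal to angelicity of $C_p(X)$ for Lindel\"of $\Sigma$ spaces (Orihuela, recorded in \cite{arhangelskii2}), whereas the paper cites Arhangel'ski\u\i's variation of Baturov's theorem (\cite{arhangelskii3}, \cite{arhangelskii}); these are closely related entry points into the same circle of results, and either citation suffices for a survey.
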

Theorem \ref{grot} {\sf (i)} is Grothendieck’s theorem. 
The proof of Theorem \ref{grot} {\sf (ii)}
can be reduced to {\sf (iii)} as every $\sigma$-compact space is Lindel\"of $\Sigma$ - see \cite{tkachuk}. The proof for the case in which
$X$ is Lindel\"of $\Sigma$ follows from a variation by Arhangel'ski\u \i \ of a famous theorem of Baturov (see \cite{arhangelskii3} and \cite{arhangelskii}).
\begin{remark}
{\rm 
It is a non-trivial fact that if $Y$ is 
a dense subspace of $X$ and $Y$ is Grothendieck, then $X$ is Grothendieck; the proof requires the full strength of being a Grothendieck space;
see, for instance, \cite{arhangelskii2}. Then, for Grothendieck spaces, we can restrict our attention to dense subspaces in Theorem
5.11. More specifically, we have: if having a property $P$ implies that $X$ is Grothendieck, then having a dense
subspace with the property $P$ also implies that $X$ is Grothendieck. So, if $X$ is Hausdorff \emph{$k$-separable}, i.e.~has
a dense $\sigma$-compact subspace, then it is Grothendieck because Lindel\"of $\Sigma$ spaces, in particular $\sigma$-compact spaces, are Grothendieck (see \cite{arhangelskii2}). We thank V. V. Tkachuk for pointing this out to us. In
the next section, we will be able to extend results in \cite{casazzaiovino} for logics for which the spaces of types are
weakly Grothendieck and first countable. As an example, we will work with countable fragments of a continuous
infinitary logic, $\infinitary$,
for which the spaces of types are metrizable and separable, in which case they have a $\sigma$-compact dense subspace and thus are (weakly) Grothendieck.
}
\end{remark}
The separable case follows from our previous discussion: countable spaces are Grothendieck since they are $\sigma$-compact. By having a Grothendieck dense subspace, separable spaces are Grothendieck.

It can be seen in Arhangel’ski\u\i \; \cite{arhangelskii} that the conditions in theorem 5.11 can yield stronger results, e.g. if $X$ is countably compact, then being pointwise bounded is enough for a subset of $C_p(X)$ 
to have compact closure. Moreover, if $A$ 
is countably compact in $C_p(X)$ for countably compact $X$, then, if $\bar{A}$
is the closure of $A$ in $\real^{X}$, then the closure $\bar{A}\cap{C_p(X)}$ is an Eberlein compactum (i.e. metrizable and homeomorphic to a compact subset of $C_p(Y)$
for some compact metrizable $Y$). This motivates the following problem:
\bigbreak
{\bf \sf Problem 2:} Find weaker conditions under which the closure of subspaces that are countably compact in $C_p(X)$ are just compact.
\bigbreak
Answers to the previous questions would light the road towards extending the results of Casazza and
Iovino to further logics. In \cite{casazzaiovino}, most  results assume that the logic is
countably compact as this allows them to work only with sequences and ultrafilters over $\om$. This is useful
because it allows one to use Theorem 3.8 to test stability (and definability). In the next section, we
generalize some of these results for logics for which the space of types is a first countable weakly Grothendieck space.

\section{\sf Infinitary Continuous Logics and Final Remarks}

In this section we present a double (ultra) limits condition which is equivalent to 
$\bar{A}\cap{C_p(X)}$ being compact when $X$ is first countable and weakly Grothendieck.
As an application, we extend some results from \cite{casazzaiovino} to non-compact
continuous logics with more expressive power for doing analysis.

Recall that a subspace $Y$ of an space $X$
is relatively compact if $\bar{Y}$ is compact in $X$.
\begin{theorem}\label{6.1}
Suppose $X$ is a first countable weakly Grothendieck space. If $A$ is a subset of $C_p(X, [0, 1])$, then $A$ is relatively compact in $C_p(X)$ if and only if it satisfies the following double limit condition: for every pair of sequences
$\la{f_n \colon n<\om}\ra$ and $\la{x_m \colon m<\om}\ra$ and ultrafilters $\U$ and $V$ in $\beta\om$
$$\lim_{n\to \U}\lim_{m\to \V} f_n(x_m)=
\lim_{m\to \V}\lim_{n\to \U} f_n(x_m)$$
whenever the limit $\lim_{m\to \V}x_m$ exists.
\end{theorem}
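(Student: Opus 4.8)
The plan is to prove both implications, the forward one being essentially soft and the reverse one carrying the real content through a continuity lemma for ultralimits combined with the weakly Grothendieck hypothesis. Throughout, note that members of $A \subseteq C_p(X,[0,1])$ are uniformly bounded, so pointwise boundedness is automatic.

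For the forward direction, suppose $A$ is relatively compact in $C_p(X)$. Then its closure $\bar{A}$ is a compact, hence closed, subset of $[0,1]^X$, so $\bar A$ contains the closure of $A$ taken in $[0,1]^X$; consequently every pointwise ultralimit of members of $A$ is continuous. Given sequences $\la f_n \colon n<\om\ra$ in $A$ and $\la x_m \colon m<\om\ra$ in $X$, ultrafilters $\U,\V$, and $x=\lim_{m\to\V}x_m$, put $g=\lim_{n\to\U}f_n$ (the pointwise ultralimit), so $g\in C_p(X)$. For each fixed $n$, continuity of $f_n$ gives $\lim_{m\to\V}f_n(x_m)=f_n(x)$, whence $\lim_{n\to\U}\lim_{m\to\V}f_n(x_m)=\lim_{n\to\U}f_n(x)=g(x)$; and for each fixed $m$ we have $\lim_{n\to\U}f_n(x_m)=g(x_m)$, so continuity of $g$ gives $\lim_{m\to\V}\lim_{n\to\U}f_n(x_m)=\lim_{m\to\V}g(x_m)=g(x)$. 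Both iterated limits equal $g(x)$, so the double limit condition holds.

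For the reverse direction, assume the double limit condition. The key step I would isolate is the following continuity lemma: for every $\la f_n \colon n<\om\ra$ in $A$ and every ultrafilter $\U$ on $\om$, the pointwise ultralimit $g=\lim_{n\to\U}f_n\in[0,1]^X$ is continuous. Granting this, any countably infinite subset $\{f_n : n<\om\}$ of $A$ has, for a free $\U$, the cluster point $g\in C_p(X)$, so every infinite subset of $A$ has a limit point in $C_p(X)$; that is, $A$ is countably compact in $C_p(X)$ in the sense used in the excerpt. Since $X$ is weakly Grothendieck, the closure of such an $A$ is compact, i.e.\ $A$ is relatively compact in $C_p(X)$. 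This is exactly where the weakly Grothendieck hypothesis is consumed: the lemma only yields relative countable compactness, and it is the Grothendieck-type property that upgrades this to relative compactness.

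To prove the lemma I would argue by contradiction using first countability. If $g$ is discontinuous at some $x\in X$, first countability yields a sequence $x_m\to x$ in $X$ with $g(x_m)\not\to g(x)$; fix $\varepsilon>0$ and the infinite set $M=\{m : |g(x_m)-g(x)|\ge\varepsilon\}$. Because the whole sequence converges to $x$, every free ultrafilter $\V$ satisfies $\lim_{m\to\V}x_m=x$, so I may choose a free $\V$ with $M\in\V$, and then the proviso of the double limit condition is met. Computing as in the forward direction, $\lim_{n\to\U}\lim_{m\to\V}f_n(x_m)=g(x)$ while $\lim_{m\to\V}\lim_{n\to\U}f_n(x_m)=\lim_{m\to\V}g(x_m)$, and the latter lies in the closed set $\{t : |t-g(x)|\ge\varepsilon\}$ because $M\in\V$. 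The double limit condition forces these equal, contradicting $|\lim_{m\to\V}g(x_m)-g(x)|\ge\varepsilon$; hence $g$ is continuous. I expect the main obstacle to be precisely this bookkeeping: one must select $\V$ so that it simultaneously witnesses the discontinuity (by containing $M$) and respects the convergence $\lim_{m\to\V}x_m=x$ needed to apply the hypothesis, and one must correctly align the two iterated ultralimits with the interchange asserted by the double limit condition. The remaining ingredients — the elementary compactness facts in the product $[0,1]^X$ and the invocation of the weakly Grothendieck property — are routine once the lemma is in hand.
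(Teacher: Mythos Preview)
Your proof is correct and follows essentially the same approach as the paper. Both directions match: for the converse you isolate the same key point---that the double limit condition forces every pointwise ultralimit of a sequence from $A$ to be continuous, whence $A$ is countably compact in $C_p(X)$ and weakly Grothendieck finishes---which is exactly the contrapositive of what the paper argues. The only cosmetic difference is in manufacturing the witness to discontinuity: the paper enumerates a countable local base $\{B_m\}$ at the bad point, picks $x_m\in B_m$ with $|g(x_m)-g(y)|>\varepsilon$, and builds $\V$ from the centred family $\{A_k\}$ with $A_k=\{m : x_m\in B_k\}$, whereas you invoke the sequential characterisation of continuity in first countable spaces to get a convergent sequence directly and then choose any free $\V$ containing the bad index set $M$; these are equivalent and your version is arguably tidier.
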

\begin{proof}
Suppose $\bar{A}\cap{C_p(X)}$ is not compact in $C_p(X)$. Then $\bar{A}\cap{C_p(X)}$ is not countably compact, so let $\la{f_n \colon n<\om}\ra$ be a sequence in $A$ with no limit points in $\bar{A}\cap{C_p(X)}$.
Notice that $\bar{A}$ is compact in $[0, 1]^X$ since it is a closed subset, so each ultralimit of the sequence exists, is a
limit point and is discontinuous.
Take a non-principal ultrafilter $\U$ over $\omega$ and let $\lim_{n\to \U}f_n=g$.
Notice that $g$ is discontinuous by assumption. Then there is an $\varepsilon>0$ and a $y\in{X}$ such that for every neighborhood $U$ of $y$, there is a $y'\in{U}$ such that $|g(y)-g(y')|>\varepsilon$. 
Let $\B$ be a local base at $y$ and consider $\B'=\{\cap{\s} \colon \s\in{[\B]^{<\om}}\}$.
Clearly $\B'$ is also a local base at $y$.
Take an enumeration $\B'=\{B_m \colon m<\om\}$ and, for each $m<\om$, choose $x_m\in{B_m}$ such that $|g(x_m)-g(y)|>\varepsilon$.
Now, for every $k<\om$ let
$A_k=\{m<\om \colon x_m\in{B_k}\}$; 
notice that $\{A_k \colon k<\om\}$ is centred so it can be extended to an ultrafilter $\V\in\beta\om$.
Let $U$ be a neighbourhood of $y$ and let $k<\om$ be such that $B_k\subseteq{U}$.
Then $A_k\subseteq\{m<\om \colon x_m\in{U}\}$. Thus, $\lim_{m\to \V}x_m=y$
and 
$$\lim_{n\to\U}\lim_{m\to \V}f_n(x_m)=g(y)$$
since each $f_n$ is continuous.
On the other hand, 
$\lim_{m\to \V}\lim_{n\to \U} f_n(x_{m})=
\lim_{m\to\V}g(x_{m})$ 
exists by the compactness of [0, 1].
However by our construction, 
$|g(x_{m})-g(y)|>\varepsilon$ for every 
$m<\omega$.
Then the ultralimits $\lim_{n\to\U}\lim_{m\to\V} f_n(x_{\alpha})$ and 
$\lim_{m\to\V}\lim_{n\to\U} f_n(x_{m})$ are different, a contradiction.
Conversely, suppose $\lim_{m\to\V}x_m$ exists and 
$\bar{A}\ \cap\ {C_p(X)}$ is compact. Then for any sequence $\la{f_n \colon n<\om}\ra$ in $A$ and ultrafilter $\U$ over $\om$, there is a continuous $g=\lim_{n\to\U}f_n$. 
Thus we have that 
$\lim_{n\to\U}\lim_{m\to\V}f_n(x_m)=\lim_{n\to\U}f_n(y)=g(y)$ and also that $\lim_{m\to\V}\lim_{n\to\U}f_n(x_m)=\lim_{m\to\V}f_m(y)=g(y)$.
\end{proof}
\qed

``First countability" is not best, but is convenient to use. In work in progress, we aim for the optimal hypotheses for Theorem \ref{6.1}. Now we can restate Theorem 
\ref{6.1} as a model-theoretic result:
\begin{theorem}
Let $\logic$ be a logic, $L$ a language, and suppose 
$\C\subseteq{\strl}$ is such that the space of $\varphi$-types on $\C$ is first countable and weakly Grothendieck. Then, the following are equivalent:
\begin{itemize}
    \item [\sf (i)] Whenever a left type over $\C$ is given by 
    $t=\lim_{i\to\U}ltp_{\varphi,\modelm_i}$
    and $\la{\modeln_j \colon j<\om}\ra$ is a sequence in $\C$ and $\V\in{\beta\om}$
    we have 
    $$\lim_{i\to\U}\lim_{j\to\V}\varphi(\modelm_i,\modeln_j)=\lim_{j\to\V}\lim_{i\to\U}\varphi(\modelm_i,\modeln_j).$$
    \item [\sf (ii)] Whenever a left type is given by 
    $t=\lim_{i\to\U}ltp_{\varphi,\modelm_i}$
    and $\la{\modeln_j \colon j<\om}\ra$ is a sequence in $\C$
    we have 
    $$\text{sup}_{i<j}\varphi(\modelm_i,\modeln_j)=
    \text{inf}_{j<i}\varphi(\modelm_i,\modeln_j).$$
    \item [\sf (iii)] If $\tau$ is a left $\varphi$-type over $\C$ then $\tau$ is explicitly definable.
\end{itemize}
\end{theorem}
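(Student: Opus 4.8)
The backbone of the argument is Theorem \ref{6.1}: the plan is to exhibit the three conditions as the three standard faces---double limit, relative compactness, and the order property---of a single stability phenomenon, reading the topological content off Theorem \ref{6.1} applied to a suitable function space. First I would recast definability as relative compactness. For each $\modeln\in\C$ let $e_{\modeln}\colon\srfi[\C]\to[0,1]$ be the evaluation $e_{\modeln}(s)=s(\modeln)$; since $\srfi[\C]\subseteq C_p(\C)$ and evaluation at a point of $\C$ is continuous in the pointwise topology, each $e_{\modeln}$ is continuous, so $A=\{e_{\modeln}\colon\modeln\in\C\}\subseteq C_p(\srfi[\C],[0,1])$. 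A short computation shows that the global left $\varphi$-type $\tau$ attached to $t=\lim_{\alpha\to\U}ltp_{\varphi,\modelm_\alpha}$ satisfies $\tau(s)=\lim_{\alpha\to\U}s(\modelm_\alpha)=\lim_{\alpha\to\U}e_{\modelm_\alpha}(s)$, so that $\tau$ is exactly the pointwise ultralimit $\lim_{\alpha\to\U}e_{\modelm_\alpha}$ in $[0,1]^{\srfi[\C]}$. Hence $\tau$ is explicitly definable (continuous) if and only if it lies in $C_p(\srfi[\C])$, and therefore {\sf (iii)} holds if and only if $A$ is relatively compact in $C_p(\srfi[\C])$.

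With this dictionary in place, I would invoke Theorem \ref{6.1} with $X=\srfi[\C]$, which is first countable and weakly Grothendieck by hypothesis. Relative compactness of $A$ is then equivalent to the double limit condition for sequences $\la e_{\modelm_i}\colon i<\om\ra$ in $A$ and points $\la s_j\colon j<\om\ra$ in $\srfi[\C]$, subject to $\lim_{j\to\V}s_j$ existing. Taking $s_j=rtp_{\varphi,\modeln_j}$ and reading ``$\lim_{j\to\V}s_j$ exists'' as ``the right $\V$-ultralimit of the $rtp_{\varphi,\modeln_j}$ is a genuine type'', the resulting equation is precisely the commutation of the iterated ultralimits of $\varphi(\modelm_i,\modeln_j)$, i.e. the right-hand analogue of {\sf (i)}. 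The implication {\sf (iii)}$\Rightarrow$ double limit is the easy half of Theorem \ref{6.1} (ultralimits of the $e_{\modeln}$ are continuous); the substantive half, double limit $\Rightarrow$ relative compactness, is exactly where first countability and the weakly Grothendieck property are consumed.

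The equivalence {\sf (i)}$\Leftrightarrow${\sf (ii)} I would treat combinatorially, as the continuous analogue of the ``stable iff no order property'' theorem of Section 3. Given a left type $t=\lim_{i\to\U}ltp_{\varphi,\modelm_i}$ and a sequence $\la\modeln_j\ra$, Lemma \ref{3.8} extracts subsequences along which the two iterated ultralimits are realized as the limits $\lim_{i<j}\varphi(\modelm_i,\modeln_j)$ and $\lim_{j<i}\varphi(\modelm_i,\modeln_j)$; a Ramsey monotonization then shows that these two values coincide for every choice of $\U,\V$ precisely when $\text{sup}_{i<j}\varphi(\modelm_i,\modeln_j)=\text{inf}_{j<i}\varphi(\modelm_i,\modeln_j)$, which is {\sf (ii)}. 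Because the order property is symmetric in its two variables by Remark \ref{remarkramsey}, condition {\sf (ii)} is insensitive to which side is constrained to be a type; this is what lets me identify the right-hand analogue obtained in the previous paragraph with {\sf (i)} itself, closing the loop {\sf (i)}$\Leftrightarrow${\sf (ii)}$\Leftrightarrow${\sf (iii)}.

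The hard part will be the passage between \emph{realized} and \emph{limit} types. Theorem \ref{6.1} quantifies over \emph{all} points of $\srfi[\C]$, whereas {\sf (i)} and {\sf (ii)} only constrain sequences $rtp_{\varphi,\modeln_j}$ and $ltp_{\varphi,\modelm_i}$ with $\modelm_i,\modeln_j\in\C$, i.e. types realized in $\C$, which are merely \emph{dense} in the type space. Upgrading the double limit condition from this dense set to arbitrary points---equivalently, converting a general-point violation produced in the reverse direction of Theorem \ref{6.1} into one witnessed by realized types---is the genuine crux: one writes each point of $\srfi[\C]$ as an ultralimit of realized types and pushes the required commutation through an iterated-ultrafilter (Fubini-type) argument, with separate continuity of $\varphi$ and the countable local bases afforded by first countability as the tools that make the reduction go through. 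A secondary bookkeeping point, dispatched by the symmetry just invoked, is that Theorem \ref{6.1} naturally delivers the double limit with the \emph{right} ultralimit constrained to be a type while {\sf (i)} constrains the \emph{left} one; identifying the two is the non-compact analogue of the two-sided formulation of the compact case treated earlier in this section.
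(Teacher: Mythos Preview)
Your approach is essentially the paper's: Lemma~\ref{3.8} for {\sf (i)}$\Leftrightarrow${\sf (ii)}, and Theorem~\ref{6.1} applied with $X$ the type space to obtain the equivalence with {\sf (iii)}. The one structural difference is that the paper takes $X=\slfi[\C]$ and, for a fixed global type $\tau$ presented via $\la\modeln_\alpha:\alpha<\kappa\ra$ and $\V$, works with $A=\{f_\alpha:\alpha<\kappa\}$ where $f_\alpha(t)=t(\modeln_\alpha)$, concluding that $\tau=\lim_{\alpha\to\V}f_\alpha$ is continuous; you instead take $X=\srfi[\C]$ and handle all global left types at once via the single family $A=\{e_{\modelm}:\modelm\in\C\}$. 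That is a cosmetic variation, not a different method.

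The concerns you isolate as ``the hard part'' are genuine, but the paper's proof does not address them either: it simply asserts that $A$ ``satisfies the double ultralimit condition from the previous theorem'' and that {\sf (iii)}$\Rightarrow${\sf (i)} is ``immediate''. So you are being more scrupulous than the source, not less. One observation that may simplify your bookkeeping: your left/right asymmetry worry is largely an artefact of choosing $X=\srfi[\C]$. With the paper's choice $X=\slfi[\C]$, the Theorem~\ref{6.1} side-condition ``$\lim_{m\to\V}x_m$ exists in $X$'' reads ``the limit of left types is a left type'', which lines up directly with the hypothesis in {\sf (i)} that $t=\lim_{i\to\U}ltp_{\varphi,\modelm_i}$ be a genuine type---no appeal to Remark~\ref{remarkramsey} is needed. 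The residual issue, that Theorem~\ref{6.1} quantifies over sequences $\la x_m\ra$ of \emph{arbitrary} types in $X$ rather than realized ones, persists in either formulation and is exactly the diagonal/Fubini step you outline; the paper leaves it implicit.
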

\begin{proof}
That {\sf (i)} and {\sf (ii)} are equivalent follows from Lemma \ref{3.8}. Let $\la{\modeln_{\alpha} \colon \alpha<\kappa}\ra$ be a $\kappa$-sequence in $\C$. If $\tau(\lim_{i\to\U}ltp_{\varphi,\modelm_i})
=\lim_{i\to\U}\lim_{\alpha \to\V}\varphi(\modelm_i,\modeln_\alpha)$ defines a global $\varphi$-type
where $\V$ is an ultrafilter over $\kappa$ define $f_{\alpha}\colon \slfi[\C]\to [0, 1]$ by
$f_{\alpha}(\lim_{i\to\U}ltp_{\varphi,\modelm_i})=\lim_{i\to\U}\varphi(\modelm_i,\modeln_{\alpha})$. Then $A=\{f_{\alpha} \colon \alpha<\kappa\}$
 is a subset of ${C_p(\slfi[\C], [0, 1])}$ and satisfies the double ultralimit condition from the previous theorem. Then $\bar{A}\cap{C_p(\slfi[\C], [0,1])}$ is compact
and $\tau=\lim_{\alpha \to\V}f_j$ is continuous.
That {\sf (iii)} implies {\sf (i)} is immediate.
\end{proof}
\qed

Let $\C$ be the set of all structures which are normed spaces based on $c_{00}$. We introduce the continuous logic formula for pairs of structures used in \cite{casazzaiovino}: for norms
$\nnnorm_1$ and $\nnnorm_2$, let
$$
D(\nnnorm_1,\nnnorm_2)=
\text{sup}\left\{\dfrac{\nnorm_1}{\nnorm_2}
\colon \nnorm_{l_1}=1\right\}. 
$$
Then let
$$
\varphi(\nnnorm_1, \nnnorm_2)=\dfrac{\log{D(\nnnorm_1,\nnnorm_2)}}{1+\log{D(\nnnorm_1,\nnnorm_2)}}.
$$
It is not difficult to see that if
$t=\lim_{i\to\U}ltp_{\phi, \nnnorm_i}$ is realized by a structure $(\modelm, \nnnorm_{*})$, then

\begin{equation}\label{1}
\lim_{i\to\U}
\underset{\nnorm_{l_1}=1}{\text{sup}}
\dfrac{\nnorm_i}{\nnorm}=
\underset{\nnorm_{l_1}=1}{\text{sup}}
\dfrac{\nnorm_{*}}{\nnorm}.
\end{equation}
for any structure $(c_{00}, \nnorm)$ in $\C$.
In particular, taking $\nnnorm_{l_1}=\nnnorm$ yields, for each $x\in{c_{00}}$, 
$\lim_{i\to \U}\nnorm_i=\nnorm_{*}$.
If in addition 
$\nnnorm_1\leq\nnnorm_2\dots
\leq{\nnorm_n}\dots$, then 
$\lim_{i\to \infty}\nnorm_i=\nnorm_{*}$.
Conversely, if
$\nnnorm_1\leq\nnnorm_2\dots
\leq{\nnorm_n}\dots$ and also
$\lim_{i\to \infty}\nnorm_i=\nnorm_{*}$for every $x\in{c_{00}}$, then $(1)$ holds for any $(c_{00}, \nnnorm)$ in $\C$ and any
nonprincipal ultrafilter $\U$ over $\om$.

This motivates the following definition from \cite{casazzaiovino}:
\begin{definition}
Let $\C$ be the set of all structures which are normed spaces based on $c_{00}$, $\varphi$ a formula for a pair of structures
and $\nnnorm_{*}$ be a norm on $c_{00}$.
\begin{itemize}
    \item [\sf (i)] If $\{\nnnorm_i \colon i<\om\}$ is a family of norms on $c_{00}$ we say that $\{ltp_{\varphi,\nnnorm_i} \colon i<\om\}$ determines $\nnorm_{*}$ uniquely if for every $\U\in{\beta\om}$,
    $t=\lim_{i\to\U}ltp_{\varphi,\nnnorm_i}$ is realized and $\nnnorm_{*}$ is its unique realization.
    \item [\sf (ii)] We say that $\nnnorm_{*}$ is uniquely determined by its $\varphi$-type over $\C$ if there is a family of norms 
    $\{\nnnorm_i \colon i<\om\}$ on $c_{00}$ such that $\{ltp_{\varphi,\nnnorm_i} \colon i<\om\}$ determines $\nnnorm_{*}$
    uniquely.
\end{itemize}
\end{definition}
Notice that if $\nnnorm_i$ denotes the 
$i$-th iterate of the Tsirelson norm and 
$\nnnorm_T$ is the Tsirelson norm, then 
$\lim_{i\to \infty}\nnorm_i=\nnorm_T$ for
each $x\in{c_{00}}$ and so we have the following result from \cite{casazzaiovino}:
\begin{proposition}
Let $\logic$ be a logic for metric structures, $L$ a language for pairs of structures, $\C$ the class of structures
$(c_{00}, \nnnorm_{l_1},\nnnorm )$ such that the norm completion of $(c_{00}, \nnnorm)$ 
is a Banach space
including $l_p$ or $c_{00}$ and let $\varphi(X,Y)$ be the formula defined above. Suppose $\{(c_{00},\nnnorm_{l_1},\nnnorm_i) \colon i<\om \}$ is a family of structures in $\C$ 
such that 
$\nnnorm_1\leq\nnnorm_2\dots
\leq{\nnorm_n}\dots$ and the $\varphi$-type
$t=\lim_{i\to \U}ltp_{\varphi,\nnnorm_i}$
is realized by $(c_{00}, \nnnorm_{l_1},\nnnorm_{*})$ in $\strl$, 
then $\{ltp_{\varphi, \nnnorm_i} \colon i<\om\}$ determines uniquely $\nnnorm_{*}$
over $\C$. In
particular, the Tsirelson norm is uniquely determined by its $\varphi$-type over $\C$.
\end{proposition}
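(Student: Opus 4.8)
The plan is to unwind the definition of unique determination. To say that $\{ltp_{\varphi,\nnnorm_i}:i<\om\}$ \emph{determines $\nnnorm_*$ uniquely} is, by definition, to verify for \emph{every} $\U\in\beta\om$ both that $t=\lim_{i\to\U}ltp_{\varphi,\nnnorm_i}$ is realized and that $(c_{00},\nnnorm_{l_1},\nnnorm_*)$ is its only realization. The hypothesis already supplies a realization for the single $\U$ named in the statement, so two things remain: that the limit type is in fact \emph{the same} function for all ultrafilters, and that it admits a unique realization. The engine for both is monotonicity: since $\nnorm_1\le\nnorm_2\le\cdots$ for each $x$, for every fixed $(c_{00},\nnnorm)\in\C$ the ratio $\nnorm_i/\nnorm$ increases with $i$, so $D(\nnnorm_i,\nnnorm)=\sup\{\nnorm_i/\nnorm:\nnorm_{l_1}=1\}$ is nondecreasing, and hence so is the bounded sequence $\varphi(\nnnorm_i,\nnnorm)$.

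First I would dispose of ultrafilter-independence. A bounded monotone sequence of reals converges, and for a convergent sequence $\lim_{i\to\U}$ agrees with the ordinary limit for every $\U$. Hence $\lim_{i\to\U}ltp_{\varphi,\nnnorm_i}$ is literally the same function $t$ on $\C$ for all $\U\in\beta\om$; as $t$ is realized by $(c_{00},\nnnorm_{l_1},\nnnorm_*)$ by hypothesis, it is realized for every ultrafilter. This settles existence and reduces the proposition to uniqueness.

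For uniqueness I would suppose $(c_{00},\nnnorm_{l_1},\nnnorm_{**})$ is another realization of $t$. Then $ltp_{\varphi,\nnnorm_*}$ and $ltp_{\varphi,\nnnorm_{**}}$ coincide on all of $\C$; since $\varphi$ is a strictly increasing, hence injective, function of $D$ on the range where it takes values in $[0,1]$, this yields $D(\nnnorm_*,\nnnorm)=D(\nnnorm_{**},\nnnorm)$, that is
\[
\sup_{\nnorm_{l_1}=1}\frac{\nnorm_*}{\nnorm}=\sup_{\nnorm_{l_1}=1}\frac{\nnorm_{**}}{\nnorm}\qquad\text{for every }(c_{00},\nnnorm)\in\C.
\]
The substantive step is to pass from equality of these supremised ratios to equality of the two norms \emph{at each point}. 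I would do this by a separation argument with tailor-made test norms: fixing $x_0$ with $\lVert x_0\rVert_{l_1}=1$ and a norming functional $f\in\ell_\infty$ with $f(x_0)=1$ (e.g.\ $f=\mathrm{sign}(x_0)$ on the support of $x_0$), set $\lVert y\rVert'_M=|f(y)|+M\lVert y-f(y)x_0\rVert_{l_1}$. Each $\lVert\cdot\rVert'_M$ is a norm equivalent to $\lVert\cdot\rVert_{l_1}$, so its completion is isomorphic to $\ell_1$ and $(c_{00},\nnnorm_{l_1},\lVert\cdot\rVert'_M)\in\C$ (take $p=1$); moreover $\lVert x_0\rVert'_M=1$ while $\lVert y\rVert'_M\to\infty$ for every $y$ off the line through $x_0$. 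Thus, as $M\to\infty$, the supremum defining $D(\nnnorm_*,\lVert\cdot\rVert'_M)$ localises to that line and converges to $\lVert x_0\rVert_*$ (after a harmless rescaling keeping the values of $D$ in the range where $\varphi$ is injective), and likewise to $\lVert x_0\rVert_{**}$; the displayed equality then forces $\lVert x_0\rVert_*=\lVert x_0\rVert_{**}$. By homogeneity and the arbitrariness of $x_0$ we conclude $\nnnorm_*=\nnnorm_{**}$. Equivalently, this recovers the pointwise identity $\lVert x\rVert_*=\lim_i\lVert x\rVert_i$ noted just before the proposition, from which uniqueness is immediate since the right-hand side does not depend on the realization. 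The main obstacle I anticipate is precisely this localisation: justifying the interchange of the supremum with the limit $M\to\infty$ and controlling the contribution of directions near, but not on, the line through $x_0$.

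Finally, for the \emph{in particular}, I would check that the finite Tsirelson iterates fit the hypotheses: by the discussion preceding the proposition they form an increasing sequence with $\lim_i\lVert x\rVert_i=\lVert x\rVert_T$ for every $x\in c_{00}$; each iterate lies in $\C$, its completion containing a copy of $c_0$ (the stages being of Schreier type, see \cite{casazzashura}); and the pointwise limit $\nnnorm_T$ realizes $t$ by the converse direction of that same discussion. The general statement then applies and exhibits $\nnnorm_T$ as the unique realization of its $\varphi$-type over $\C$.
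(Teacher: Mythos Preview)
The paper does not give a separate proof; the proposition is attributed to \cite{casazzaiovino}, and the discussion immediately preceding it is the intended argument. That discussion agrees with you on ultrafilter-independence (monotonicity makes every $\lim_{i\to\U}$ coincide with the ordinary limit), but for uniqueness it is much terser: from equation~(1) it asserts in one line that setting $\nnnorm=\nnnorm_{l_1}$ already yields the pointwise identity $\lim_{i\to\U}\nnorm_i=\nnorm_*$ for every $x\in c_{00}$, whence any realization is forced to equal the pointwise limit of the $\nnnorm_i$ and uniqueness is immediate.

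Your separation construction with the norms $\lVert\cdot\rVert'_M$ is therefore a different, more explicit route to that same pointwise identity. Read literally, the paper's single substitution $\nnnorm=\nnnorm_{l_1}$ in~(1) recovers only $\sup_{\lVert x\rVert_{l_1}=1}\nnorm_*$, not each individual $\nnorm_*$, so your argument can equally be viewed as supplying the detail the paper elides. The localisation you flag is genuinely closable: the $\nnnorm_{l_1}$ test does give a uniform bound $\lVert\cdot\rVert_*\le C\lVert\cdot\rVert_{l_1}$ (with $C$ finite whenever the type takes a value below~$1$ at $\nnnorm_{l_1}$, as in the Tsirelson case), and with that bound in hand, writing $x=a x_0+z$ with $f(z)=0$ one has
\[
\frac{\nnorm_*}{\lVert x\rVert'_M}\;\le\;\frac{|a|\,\lVert x_0\rVert_*+C\lVert z\rVert_{l_1}}{|a|+M\lVert z\rVert_{l_1}}\;\le\;\max\Bigl(\lVert x_0\rVert_*,\,\tfrac{C}{M}\Bigr),
\]
so in fact $D(\nnnorm_*,\lVert\cdot\rVert'_M)=\lVert x_0\rVert_*$ exactly once $M>C/\lVert x_0\rVert_*$, and no interchange of limit and supremum is needed. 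The same computation applied to any second realization $\nnnorm_{**}$ gives $\lVert x_0\rVert_{**}=\lVert x_0\rVert_*$ directly.
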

The following result is proved in \cite{casazzaiovino} and is purely analytical:
\begin{proposition}
Let $\nnnorm_i$ be the $i$-th iterate 
in the construction of the Tsirelson norm. Then the following
hold:
\begin{itemize}
    \item [\sf (i)] $\underset{\nnorm_{l_1}=1}
    {\text{sup}}\dfrac{\nnorm_i}{\nnorm_j}\leq{1}$ for $i<j$.
    \item [\sf (ii)] $\underset{\nnorm_{l_1}=1}
    {\text{sup}}\dfrac{\nnorm_i}{\nnorm_j}\geq{j}$ for $i>j$.
\end{itemize}
Thus, 
$\underset{i<j}{\text{sup}}\; 
\varphi(\nnnorm_i,\nnnorm_j)
\neq 
\underset{j<i}{\text{inf}}\;
\varphi(\nnnorm_i,\nnnorm_j)$.
\end{proposition}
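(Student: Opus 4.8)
The plan is to read both parts off the recursive definition of the iterates and then substitute the resulting bounds on $D$ into the formula $\varphi$. Recall that the iterates are built from $\left\lVert x\right\rVert_0=\left\lVert x\right\rVert_{c_{00}}$ (the sup norm) by $\left\lVert x\right\rVert_{n+1}=\max\{\left\lVert x\right\rVert_{c_{00}},\tfrac12\sup\{\sum_{l<k}\left\lVert E_l x\right\rVert_n:\{k\}\le E_1<\dots<E_k\}\}$, so that $\left\lVert x\right\rVert_0\le\left\lVert x\right\rVert_1\le\cdots$ increases pointwise to $\left\lVert x\right\rVert_T$ (see \cite{casazzashura}). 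This monotonicity is the only fact needed for (i): for $i<j$ we have $\left\lVert x\right\rVert_i\le\left\lVert x\right\rVert_j$, so the ratio is $\le1$; and any single basis vector $e_n$ satisfies $\left\lVert e_n\right\rVert_p=1$ for every $p$ while $\left\lVert e_n\right\rVert_{l_1}=1$, so the supremum defining $D(\nnnorm_i,\nnnorm_j)$ equals exactly $1$. In particular $\log D=0$ and $\varphi(\nnnorm_i,\nnnorm_j)=0$ whenever $i<j$.

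For (ii) I would first use monotonicity to reduce to $i=j+1$: since $\left\lVert x\right\rVert_i\ge\left\lVert x\right\rVert_{j+1}$ for $i>j$, it suffices to produce, for each $j$, a vector $x$ with $\left\lVert x\right\rVert_{l_1}=1$ and $\left\lVert x\right\rVert_{j+1}\ge j\,\left\lVert x\right\rVert_j$. The idea is to arrange the support so that the level-$(j{+}1)$ norm can carry out one genuinely useful round of admissible averaging that the level-$j$ norm cannot: a self-similar block vector $\sum_{l}x_l$ whose blocks $x_l$ are suitably spread-out, admissibly placed copies of a level-$j$ pattern. Taking the $E_l$ to be the supports of the $x_l$ gives the easy lower bound $\left\lVert x\right\rVert_{j+1}\ge\tfrac12\sum_l\left\lVert x_l\right\rVert_j$, which, for an appropriate number of blocks, forces $\left\lVert x\right\rVert_{j+1}\ge j\,\left\lVert x\right\rVert_j$ once the complementary upper bound on $\left\lVert x\right\rVert_j$ is in hand.

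The hard part, and the main obstacle, is precisely that upper bound on the lower iterate $\left\lVert x\right\rVert_j$: as is typical for Tsirelson-type norms, exhibiting one admissible decomposition (a lower bound) is routine, whereas showing that \emph{no} admissible decomposition does better (an upper bound) requires a careful induction over the tree of admissible cuts, tracking how the constraint $k\le\min E_1$ limits the number of blocks available at each level — the point being that a long flat support allows arbitrarily deep splitting and so does not saturate at a fixed level, which is why the sparse self-similar pattern is needed. I would either carry out this tree induction directly or, more economically, invoke the explicit iterate estimates for the canonical Tsirelson vectors recorded in \cite{casazzashura}, from which (ii) follows.

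Finally I would assemble the conclusion. By (i), $\varphi(\nnnorm_i,\nnnorm_j)=0$ for every $i<j$, so $\sup_{i<j}\varphi(\nnnorm_i,\nnnorm_j)=0$. By (ii), $D(\nnnorm_i,\nnnorm_j)\ge j$ for $i>j$, whence $\varphi(\nnnorm_i,\nnnorm_j)\ge\frac{\log j}{1+\log j}$, a quantity bounded away from $0$ (for $j\ge2$) and tending to $1$. Reading the two sides as the iterated limits they abbreviate, as in Lemma \ref{3.8}, the single sequence $\la\nnnorm_n:n<\om\ra$ tested against itself gives $\lim_{i\to\U}\lim_{j\to\V}\varphi(\nnnorm_i,\nnnorm_j)=0$ but $\lim_{j\to\V}\lim_{i\to\U}\varphi(\nnnorm_i,\nnnorm_j)=1$ for nonprincipal $\U,\V$; hence $\sup_{i<j}\varphi(\nnnorm_i,\nnnorm_j)\neq\inf_{j<i}\varphi(\nnnorm_i,\nnnorm_j)$. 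This exhibits the order property for $\varphi$ and shows that $\varphi$ is unstable on $\C$.
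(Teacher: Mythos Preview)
The paper does not actually prove this proposition: it introduces it with ``The following result is proved in \cite{casazzaiovino} and is purely analytical'' and gives no argument. So there is no in-paper proof to compare against; the paper simply cites.

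Your treatment of (i) is complete and correct: monotonicity of the iterates gives the ratio $\le 1$, and evaluating at a basis vector shows $D=1$ exactly, hence $\varphi=0$ for $i<j$. For (ii) you give an honest outline---reduce to $i=j+1$, build a self-similar block vector to get the lower bound on $\nnnorm_{j+1}$, and identify the genuine difficulty as the upper bound on $\nnnorm_j$ via a tree induction over admissible decompositions---and then, like the paper, you ultimately defer to the Tsirelson literature (\cite{casazzashura}, \cite{casazzaiovino}) for the explicit estimates. That is a fair match for what the paper does, only with more detail supplied.

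One small point on the final paragraph: as literally written, $\inf_{j<i}\varphi(\nnnorm_i,\nnnorm_j)$ is an infimum over all pairs, and (ii) with $j=1$ only yields $D\ge 1$, i.e.\ $\varphi\ge 0$, which does not by itself separate the infimum from $0$. You implicitly (and correctly) resolve this by reading the $\sup_{i<j}/\inf_{j<i}$ notation as shorthand for the iterated limits of Lemma~\ref{3.8} and Theorem~6.2(ii), which is how the paper uses it; your computation $\lim_{i\to\U}\lim_{j\to\V}\varphi=0\neq 1=\lim_{j\to\V}\lim_{i\to\U}\varphi$ is then exactly what is needed. It would be cleaner to say this explicitly rather than mixing the literal-sup reading with the iterated-limit reading.
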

\begin{theorem}
Let $\logic$ be a logic for which the space of types is first countable and weakly Grothendieck, let $L$ be a language for pairs of structures, let $\C$ be the class of structures $(c_{00}, \nnnorm_{l_1},\nnnorm)$ 
such that the norm
completion of $(c_{00}, \nnnorm)$ is a Banach space including $l_p$ or $c_{0}$,
and let $\nnnorm_T$ be the Tsirelson norm.
Then there
is a formula for pairs of structures $\phi$ such that $\nnnorm_T$ is uniquely determined by its $\varphi$-type over $\C$ and that $\varphi$-type is not explicitly definable over $\C$.
\end{theorem}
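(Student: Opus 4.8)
The plan is to let $\varphi$ be precisely the formula $\varphi(\nnnorm_1,\nnnorm_2)=\log D(\nnnorm_1,\nnnorm_2)/\bigl(1+\log D(\nnnorm_1,\nnnorm_2)\bigr)$ introduced above, and to establish the two assertions separately: that $\nnnorm_T$ is uniquely determined by its $\varphi$-type over $\C$, and that this type is not explicitly definable. The first assertion is essentially already in hand. I would begin by recording that the finite iterates $\nnnorm_1\le\nnnorm_2\le\cdots$ of the Tsirelson norm belong to $\C$ (each iterate completion includes a copy of $c_0$, an analytic fact from \cite{casazzaiovino}, \cite{casazzashura}) and that they increase pointwise to $\nnnorm_T$. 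Thus the hypotheses of the earlier Proposition on unique determination are met: the $\varphi$-type $t=\lim_{i\to\U}ltp_{\varphi,\nnnorm_i}$ is realized by $(c_{00},\nnnorm_{l_1},\nnnorm_T)$, and that Proposition then gives verbatim that $\{ltp_{\varphi,\nnnorm_i}\colon i<\om\}$ determines $\nnnorm_T$ uniquely over $\C$.

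For the second assertion I would argue by contradiction, routing everything through the model-theoretic equivalence theorem proved above, whose hypotheses demand that the space of $\varphi$-types over $\C$ be first countable and weakly Grothendieck. First I would check that this descends from the standing hypothesis on the space of types: first countability is hereditary, and the relevant type spaces $\slfi[\C]$ and $\srfi[\C]$ are closures of families of continuous functions inside $C_p(\C)$, built from the ambient type space, so the weak Grothendieck property is available where the equivalence theorem is applied. Granting this, suppose toward a contradiction that the $\varphi$-type $t$ of $\nnnorm_T$ were explicitly definable, i.e.\ that the associated global left $\varphi$-type $\tau$ is continuous. Then the direction (iii)$\Rightarrow$(i) of the equivalence theorem, which its proof establishes per type, forces the double ultralimit to commute for $t$ tested against \emph{every} sequence in $\C$, and in particular against the sequence of iterates $\la{\nnnorm_j\colon j<\om}\ra$ itself.

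Next I would convert this commuting double ultralimit into sup--inf form using Lemma \ref{3.8}: commutation of $\lim_{i\to\U}\lim_{j\to\V}\varphi(\nnnorm_i,\nnnorm_j)$ with $\lim_{j\to\V}\lim_{i\to\U}\varphi(\nnnorm_i,\nnnorm_j)$ is equivalent to the equality $\sup_{i<j}\varphi(\nnnorm_i,\nnnorm_j)=\inf_{j<i}\varphi(\nnnorm_i,\nnnorm_j)$. But the preceding purely analytical Proposition computes these two quantities from the estimates $\sup\{\nnorm_i/\nnorm_j\colon \nnorm_{l_1}=1\}\le 1$ for $i<j$ and $\ge j$ for $i>j$, and concludes precisely that they are \emph{unequal}. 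This contradiction shows that $t$ cannot be explicitly definable, which together with the first assertion proves the theorem.

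The step I expect to carry the most weight is making the per-type reading of the equivalence theorem airtight: that continuity of the single $\tau$ attached to $t$ forces the double ultralimit to commute for $t$ against the specific witnessing sequence $\la{\nnnorm_j}\ra$, rather than only yielding the weaker global statement that \emph{some} type fails to be definable. Here one must verify that the right-type ultralimit $\lim_{j\to\V}rtp_{\varphi,\nnnorm_j}$ is the point at which $\tau$ is evaluated and that continuity of $\tau$ lets the outer limit pass through it; the weak Grothendieck hypothesis on the $\varphi$-type space is exactly what keeps the relevant closures compact so these ultralimits reside where the argument needs them. A secondary, purely bookkeeping obstacle is confirming membership of the finite iterates in $\C$ so that the witnessing sequence is legitimate, but this is analytic input drawn from \cite{casazzaiovino} rather than a topological difficulty.
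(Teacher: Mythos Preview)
Your proposal is correct and follows essentially the same route the paper intends: the paper does not write out a separate proof of this theorem but treats it as an immediate consequence of the preceding Proposition on unique determination, the analytical Proposition giving $\sup_{i<j}\varphi(\nnnorm_i,\nnnorm_j)\neq\inf_{j<i}\varphi(\nnnorm_i,\nnnorm_j)$, and the equivalence theorem (Theorem 6.2), which is exactly the assembly you describe. Your identification of the per-type reading of the implication (iii)$\Rightarrow$(i) as the one point requiring care is apt; the paper disposes of this direction with the single word ``immediate'', and indeed continuity of the particular $\tau$ attached to $t=\lim_{i\to\U}ltp_{\varphi,\nnnorm_i}$ lets the outer ultralimit pass through $\tau$ at the point $\lim_{j\to\V}rtp_{\varphi,\nnnorm_j}$, yielding commutation for precisely the witnessing pair of sequences and hence the desired contradiction with the analytical Proposition.
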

\begin{remark}
{\rm
The previous result states that, although Tsirelson's space $T$ is constructed via a limiting process involving only explicitly definable spaces based on $c_{00}$, the completions of which include $l_p$ or $c_0$, $T$ itself is not explicitly definable. A much more general result is proved in \cite{casazzaiovino} for countably compact logics, namely:
if a space is explicitly definable from a class of spaces based on $c_{00}$ which include $l_p$ or $c_0$, then it must also
include $l_p$ or $c_0$.}
\end{remark}

Now we will introduce a continuous infinitary logic which extends the compact case, and we will use our $\cp$-theoretic results to prove the undefinability of Tsirelson’s space in this logic.

In discrete model theory, there are natural generalizations of first-order logic denoted by $\lkom$ where $\kappa$ is an infinite cardinal. $\lkom$ is a logic satisfying:
\begin{itemize}
    \item [\sf (i)] All first-order formulas are formulas in $\lkom$.
    \item [\sf (ii)] If $\lambda<\kappa$ and $\chi=\{\varphi_{\alpha} \colon \alpha<\lambda\}$ is a subset of formulas in $\lkom$, then $\wedge \chi$ and $\vee \chi$ are also formulas in $\lkom$.
    \item [\sf (iii)] If $\varphi$ is a formula in $\lkom$, then $(\exists x)\varphi(x)$ and $(\forall x)\varphi(x)$ are formulas in $\lkom$.
\end{itemize}
One of the main differences between $\lkom$ for $\kappa>\om$ and first-order logic ($\lomom$) is that the compactness
theorem does not hold. Moreover, if a weak version of the Compactness Theorem holds for $\lkom$, namely if
whenever $T$ is a theory in $\lkom$ such that if every subsets of $T$ of size $<\kappa$ is satisfiable then $T$ is satisfiable, then $\kappa$ is a weakly compact cardinal and so cannot be proved to exist in $\zfc$. See, for instance, \cite{jech} and
\cite{kanamori}. Thus, in general, one has to do without compactness. The omitting types theorem (an analogue of
the Baire category theorem), is a useful substitute. For an in-depth study of this result in abstract logics,
see \cite{eagletall}.

Here, we will present $\infinitary$, its continuous version, and some of their topological aspects following Eagle
\cite{eagle}. Many concepts useful for analysis can be defined in continuous $\infinitary$, 
but cannot be defined in the
usual continuous logic \cite{eagle}.
\begin{definition}
Let $L$ be a language for metric structures.
The formulas of $\infinitary(L)$ or
$\infinitary(L)$-formulas are defined recursively as follows:
\begin{itemize}
    \item [\sf (i)] All first-order $L$-formulas are $\infinitary(L)$-formulas.
    \item [\sf (ii)] If $\varphi_1, \dots, \varphi_n$ are $\infinitary(L)$-formulas and $g \colon [0, 1]^n\to [0, 1]$ is continuous then $g(\varphi_1,\dots, \varphi_n)$ is an $\infinitary(L)$-formula.
    \item [\sf (iii)] If $\{\varphi_n \colon n<\om \}$ is a family of $\infinitary(S)$-formulas, then $\text{inf}_n\varphi_n$ and $\text{sup}_n\varphi_n$ are $\infinitary(L)$-formulas. These can be also denoted as $\wedge_n\varphi_n$ and $\vee_n \varphi_n$ respectively.
    \item [\sf (iv)] If $\varphi$ is an $\infinitary(L)$-formula and $x$ is a variable, then $\text{inf}_x\varphi$ and $\text{sup}_x\varphi$ are $\infinitary(L)$-formulas.
\end{itemize}
\end{definition}
An interesting feature of continuous $\infinitary$ is that negation ($\neg$) 
becomes available in the classical sense
\cite{eagle}: if $L$ is a language and $\varphi$ 
is an $\infinitary(L)$-formula, then we can define: $\psi(x)=\vee_n\{\varphi(x)+\frac{1}{n}, 1\}$. Then, $\modelm\models\psi(a)$ 
if and only if there is an $n<\om$ such that $\modelm\models\text{max}\{\varphi(a)+\frac{1}{n},1\}$; this is $\text{max}\{\varphi(a)+\frac{1}{n},1\}=1$,
which is the same as $\varphi(a)\leq{1-\frac{1}{n}}$, 
i.e. $\modelm\not\models\varphi(a)$.
Then, $\modelm\models\psi(x)$ if and only if 
$\modelm\not\models\varphi(x)$, and so $\psi$ corresponds to $\neg{\varphi}$.

It is sometimes useful to restrict one’s attention to (countable) fragments of $\infinitary(S)$,  which are easier
to work with than the full logic.

\begin{definition}
Let $L$ be a language. A fragment $\mathcal{F}$ of $\infinitary(L)$ is a set of $\infinitary(L)$-formulas satisfying:
\begin{itemize}
    \item [\sf (i)] Every first-order formula is in $\mathcal{F}$.
    \item [\sf (ii)] $\mathcal{F}$ is closed under finitary conjunctions and disjunctions.
    \item [\sf (iii)] $\mathcal{F}$ is closed under 
    $\text{inf}_x$ and $\text{sup}_x$.
    \item [\sf (iv)] $\mathcal{F}$ is closed under subformulas.
    \item [\sf (v)] $\mathcal{F}$ is closed under substituting terms for free variables.
\end{itemize}
\end{definition}
Notice that every subset of $\infinitary(L)$ generates a fragment, and that every finite set of formulas generates
a countable fragment. There are two arguments to support the idea of working with countable fragments
of $\infinitary(L)$: firstly, notice that a given proof involves only finitely many formulas which then generate a
countable fragment of $\infinitary(L)$.
Secondly (as noted independently by C. J. Eagle (personal
communication)), if an object is definable in $\infinitary(L)$, then it is definable in a countable fragment by the
same argument.

As the work of Casazza and Iovino deals with continuous logics which are finitary in nature, it is natural to
ask whether their result on the undefinability of Tsirelson’s space can be proved for continuous $\infinitary$, 
which is arguably a more natural language from the point of view of Banach space theorists.

In discrete model theory, countable fragments of $\infinitary$ have been studied previously, for instance M.
Morley \cite{morley} showed that the space of types of a countable fragment of $\infinitary$ is Polish.

\begin{remark}
{\rm
In the continuous case, we note that the space of types of a countable fragment $\mathcal{F}$ of
continuous $\infinitary(L)$
can be seen as a subspace of $[0,1]^{\mathcal{F}}$, which is metrizable and second countable, and
so it is separable -- hence it is Grothendieck -- and first countable.}
\end{remark}

The following result follows from Theorem 6.7 and Remark 6.11:

\begin{theorem}
Let $L$ be a language for pairs of structures, $\C$ the class of structures
$(c_{00}, \nnnorm_{l_1}, \nnnorm)$ such that the norm completion of $(c_{00}, \nnnorm)$ includes $l_p$ or $c_0$ and let $\nnnorm_{T}$ be the Tsirelson norm.
Then there is an $\infinitary(L)$-formula 
for pairs of structures $\phi$ such that 
$\nnnorm_T$ is uniquely determined by its 
$\varphi$-type over $\C$ and that $\varphi$-type is not explicitly definable 
over $\C$.
\end{theorem}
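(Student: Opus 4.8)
The plan is to read this statement off the general undefinability theorem (Theorem 6.7) by instantiating it at continuous $\infinitary(L)$; all the genuinely analytic content has already been carried out in the preceding propositions, so the only new task is to check that continuous $\infinitary(L)$ meets the standing hypothesis of Theorem 6.7, namely that the relevant space of $\varphi$-types is first countable and weakly Grothendieck. First I would take $\varphi$ to be exactly the formula for pairs of structures built from $D(\nnnorm_1,\nnnorm_2)=\text{sup}\{\nnorm_1/\nnorm_2 \colon \nnorm_{l_1}=1\}$ introduced above. Since every first-order $L$-formula is, by definition, an $\infinitary(L)$-formula, this $\varphi$ is already an $\infinitary(L)$-formula; hence no genuinely infinitary connective is needed to write it down and the conclusion will automatically be about an $\infinitary(L)$-formula, as required. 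I would then let $\mathcal{F}$ be the countable fragment of $\infinitary(L)$ generated by $\varphi$ together with the finitely many formulas used to build the Tsirelson iterates; by the observation that anything definable in $\infinitary(L)$ is already definable in a countable fragment, it is harmless to run the entire argument inside $\mathcal{F}$.

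The key step is the verification of the topological hypothesis. By Remark 6.11 the space of types of the countable fragment $\mathcal{F}$ is metrizable and second countable, hence separable --- and therefore Grothendieck, in particular weakly Grothendieck --- and first countable. I would then transfer these two properties to the space of left $\varphi$-types $\slfi[\C]$, which is the base space $X$ to which the $C_p(X)$-machinery of Theorem \ref{6.1} and its model-theoretic reformulation is actually applied: because $\varphi\in\mathcal{F}$, the value $\varphi(\modelm,\modeln)$ depends only on the $\mathcal{F}$-type of the pair, so each $\ltpfim$ factors through the metrizable separable type space of $\mathcal{F}$ over $\C$, and $\slfi[\C]$ is the closure in $C_p(\C,[0,1])$ of a continuous image of that type space. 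Consequently $\slfi[\C]$ is separable, hence weakly Grothendieck, and I would argue that it is first countable as well, inheriting the metrizability of the fragment's type space.

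With the hypothesis in place I would apply Theorem 6.7 verbatim to the logic $\mathcal{F}$. Its first conclusion, that $\nnnorm_T$ is uniquely determined by its $\varphi$-type over $\C$, is unchanged from the finitary setting: it is exactly the content of the proposition asserting that the monotone iterates $\nnnorm_1\leq\nnnorm_2\leq\cdots$ of the Tsirelson norm converge pointwise to $\nnnorm_T$ and pin it down uniquely. Its second conclusion, that this $\varphi$-type is not explicitly definable over $\C$, is where the arithmetic lives: by the equivalence furnished through Lemma \ref{3.8} between the double-(ultra)limit condition and the equality $\text{sup}_{i<j}\varphi(\nnnorm_i,\nnnorm_j)=\text{inf}_{j<i}\varphi(\nnnorm_i,\nnnorm_j)$, explicit definability of the type would force that equality, which the preceding purely analytic proposition shows to fail for the Tsirelson iterates. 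Hence the $\varphi$-type of $\nnnorm_T$ is not explicitly definable, and the proof is complete.

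The step I expect to be the main obstacle is precisely the transfer of ``first countable and weakly Grothendieck'' from the abstract type space of $\mathcal{F}$, where Remark 6.11 supplies metrizability and separability cleanly, to the concrete function space $\slfi[\C]\subseteq C_p(\C,[0,1])$ demanded by Theorem \ref{6.1}: one must confirm that passing from complete $\mathcal{F}$-types to $\varphi$-types and then closing up in $C_p(\C)$ neither destroys first countability nor the (weak) Grothendieck property. Everything else is either a direct citation of Theorem 6.7 and Remark 6.11, or is identical to the finitary argument of Casazza and Iovino.
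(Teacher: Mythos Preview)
Your proposal is correct and follows exactly the paper's approach: the paper's entire proof is the single sentence ``The following result follows from Theorem 6.7 and Remark 6.11'', and you have simply unpacked what that citation means. If anything, you are more careful than the paper in flagging the passage from the fragment's type space to $\slfi[\C]$, a point the paper leaves implicit.
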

\begin{remark}
Note that even for discrete $\infinitary$ this result is new.
\end{remark}
In ongoing research, we extend this non-definability to even wider classes of logics. Applications of $\cp$-theory 
to model theory are not confined to definability questions. Stay tuned!

\section{\sf Acknowledgements}

We thank Jose Iovino, Christopher Eagle and Xavier Caicedo for valuable comments that have helped us to understand \cite{casazzaiovino}, especially the spaces of types. The second author thanks the Mathematics Department of the University of Texas at San Antonio for its gracious hospitality at a workshop in May 2018 where he had the opportunity to interact with these three and with Eduardo Due\~nez. Various subgroups of these four model theorists are doing important work. We thank Vladimir Tkachuk for pointing us in the right direction toward finding the results we needed in $\cp$-theory.

\section{Postscript}
We have not meant to give the impression that extensions of [CI] constitute the only applications of $\cp$-theory to model theory.  In particular, at the suggestion of the referee we shall say a few words about the work of P. Simon and K. Khanaki.

The great utility of stability in model theory led to the investigation of less stringent conditions, in particular what is now known as NIP (the failure of the Independence Property).  The ``bible'' for NIP is Simon's book \cite{simon1}.  

\begin{definition}
A formula $\varphi(x, y)$ has the \emph{independence property in a model $\mathfrak{U}$} if there is an infinite subset $A$ of the universe for which there is a family $\{b_I : I \subseteq A\}$ such that, for each $a \in A$, $\mathfrak{M} \models \varphi(a, b_I) \Leftrightarrow a \in I$. The formula $\varphi(x, y)$ is \emph{NIP} (or \emph{dependent}) if it does not have the independence property.
\end{definition}

As noted in \cite{simon2} and elsewhere, this is equivalent to the condition that for any model M of the theory, the closure in the type space of a subset of size at most $\kappa$ has cardinality at most $2^\kappa$.  The reader will immediately be reminded of the similar condition characterizing (model-theoretic) stability.  Building on work of Rosenthal \cite{rosenthal1}, \cite{rosenthal2}, Bourgain, Fremlin, and Talagrand \cite{bourgainfremlintalagrand} studied the closure in $\mathbb{R}^X$ for $X$ Polish, of subsets $A$ of $\cp(X)$.  As Simon notes, \cite{bourgainfremlintalagrand} proves that either that closure $\bar{A}$ contains non-measurable functions or every element of $\bar{A}$ is a pointwise limit of a sequence of elements of $A$, in which case $|\bar{A}| \le 2^{|A|}$. Simon proves that this dichotomy explicitly corresponds to the dichotomy between theories that do or do not satisfy the independence property. 

The Baire class 1 functions are classically defined as the pointwise limits of sequences of continuous real-valued functions.  Compact subspaces of $B_1(X)$, the collection of Baire class 1 functions for the topological space $X$, are called \textit{Rosenthal compacta} and have an extensive literature.  We will not try to be exhaustive in our references here.  In addition to the already mentioned \cite{bourgainfremlintalagrand} and \cite{simon2}, we call attention to the surveys of Negrepontis \cite{negrepontis} and Debs \cite{debs} and the deep work of Todorcevic \cite{todorcevic2}.  Debs concentrates on \textit{separable} Rosenthal compacta; a noteworthy aspect of \cite{simon2} is that by replacing sequential convergence by filter convergence, Simon mainly eliminates what Aleksandrov \cite{aleksandrov} called the ``parasite of countability''.  We say ``mainly'', because the use of descriptive set theory in \cite{debs} in the separable case cannot be replicated in the general case.  Moreover, it is in the case of countable theories and models that Simon \cite{simon2} establishes the connection between type spaces and Rosenthal compacta: 

\begin{theorem}[Proposition 2.16 of \cite{simon2}] Let $T$ and $M$ be countable and $\varphi(x, y)$ NIP. Then $\operatorname{Inv}_\varphi(M)$ is a Rosenthal compactum.
\end{theorem}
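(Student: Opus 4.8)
The plan is to realize $\operatorname{Inv}_\varphi(M)$ as the pointwise closure of a family of continuous functions on a Polish space and then to invoke the Bourgain--Fremlin--Talagrand dichotomy, with the NIP hypothesis selecting the ``good'' (Baire class $1$) side. First I would fix the representation. Since $T$ and $M$ are countable and the language is countable, the Stone space $X = S_y(M)$ of complete $y$-types over $M$ is compact, Hausdorff and second countable, hence compact metrizable and in particular Polish. For each instance $a$ I define $f_a \colon X \to \{0,1\}$ by $f_a(q) = \varphi(a, b_q)$, where $b_q$ realizes $q$; because $\varphi(a,y)$ defines a clopen subset of $X$, each $f_a$ is continuous, so $A = \{f_a : a \in M\} \subseteq C_p(X,2) \subseteq C_p(X)$ is pointwise bounded. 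The global $\varphi$-types finitely satisfiable in $M$ (the coheirs, which are automatically $M$-invariant) correspond precisely to the points of the closure $\bar A$ taken in $[0,1]^X$; being closed in the compact space $[0,1]^X$, this $\bar A$ is automatically compact. The goal then becomes to identify $\operatorname{Inv}_\varphi(M)$ with $\bar A$ and to locate $\bar A$ inside $B_1(X)$.

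The second step is to translate the model-theoretic hypothesis into the combinatorial language of \cite{bourgainfremlintalagrand}. I would show that the failure of NIP for $\varphi$ is exactly the existence of an \emph{independent sequence} in $A$ in the sense used by Bourgain, Fremlin and Talagrand: a witness to the independence property, namely a sequence $\la a_i : i<\om \ra$ together with parameters $\{b_I\}$ satisfying $\modelm \models \varphi(a_i, b_I) \Leftrightarrow i \in I$, is precisely a Boolean-independent sequence $\la f_{a_i} : i<\om \ra$ in $A$, which is the obstruction forcing the ``bad'' side of the dichotomy (non-measurable functions in the closure, no pointwise-convergent subsequences). Hence NIP of $\varphi$ is equivalent to the assertion that $A$ contains no such independent sequence.

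The third step is to apply the dichotomy. For $A \subseteq C_p(X)$ pointwise bounded with $X$ Polish, the absence of an independent sequence forces the angelic alternative of \cite{bourgainfremlintalagrand}: every $g \in \bar A$ is the pointwise limit of a \emph{sequence} from $A$, so that $\bar A \subseteq B_1(X)$. Combining this with the compactness obtained in the first step, $\operatorname{Inv}_\varphi(M) \cong \bar A$ is a compact subset of $B_1(X)$ with $X$ Polish, that is, a Rosenthal compactum, as claimed.

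The main obstacle I anticipate is not the topological machinery --- the Bourgain--Fremlin--Talagrand dichotomy is invoked as a black box --- but the two bookkeeping identifications that flank it. The first is verifying that $\operatorname{Inv}_\varphi(M)$ really is the pointwise closure $\bar A$ and not a proper sub- or superspace of it; coheirs are $M$-invariant but need not exhaust the invariant types in general, so here countability of $T$ and $M$ and the angelic (sequential) structure on the good side must be used to show that every $M$-invariant $\varphi$-type is in fact captured as a pointwise limit of instances from $M$, and that none is spuriously added. The second, and genuinely delicate, point is the exact matching between the combinatorial shape of the independence property --- an indexed family $\{b_I\}$ realizing all Boolean patterns --- and the notion of independent sequence for which \cite{bourgainfremlintalagrand} proves the dichotomy; making this correspondence tight in both directions is the crux on which the argument turns.
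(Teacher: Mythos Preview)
The paper does not give a proof of this statement; it appears in the postscript solely as a citation of Proposition~2.16 of Simon~\cite{simon2}, with no argument supplied. There is therefore nothing in the present paper to compare your proposal against.

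For what it is worth, your outline is broadly in the spirit of Simon's actual argument: one embeds $\operatorname{Inv}_\varphi(M)$ into $2^{S_y(M)}$ with $S_y(M)$ Polish, and the Bourgain--Fremlin--Talagrand machinery together with the translation of NIP into the absence of an independent sequence is indeed the engine. The concern you flag about identifying $\operatorname{Inv}_\varphi(M)$ with the pointwise closure $\bar A$ of the $M$-realized instances is legitimate and is precisely where the nontrivial model theory enters: $\bar A$ consists of the $\varphi$-types finitely satisfiable in $M$, which \emph{a priori} form only a subset of the $M$-invariant ones, so your first bookkeeping step does not go through on purely topological grounds. Simon's route is to show directly, using NIP (via honest definitions / uniform definability of types over finite sets), that every $M$-invariant $\varphi$-type is the pointwise limit of a \emph{sequence} of $M$-realized instances, which places the image of $\operatorname{Inv}_\varphi(M)$ inside $B_1(S_y(M))$; compactness is handled separately by checking that the image is closed in $2^{S_y(M)}$. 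So your plan is on the right track, but the step you correctly single out as delicate is the genuine crux and requires NIP-specific model-theoretic input beyond the black-box dichotomy.
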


The relevant definitions are: 

\begin{definition}
Given a model $\mathfrak{U}$ and a submodel $\mathfrak{M}$ with universe $M$, the set of automorphisms of $\mathfrak{M}$ fixing $\mathfrak{U}$ is denoted by $\text{Aut}(\mathfrak{U}/\mathfrak{M})$. A $\varphi$-type $p(x)$ over $\mathfrak{U}$ is \emph{$M$-$\varphi$-invariant} if $\sigma p(x) = p(x)$ for every $\sigma \in \text{Aut}(\mathfrak{U}/\mathfrak{M})$. The set of all invariant $M$-$\varphi$-types is denoted by $\text{Inv}_\varphi(M)$.
\end{definition}

The existence of a \emph{monster model} is a model-theoretic technicality which constitutes an important part of the literature and is relevant in this context. In the previous definition, one is usually interested in $\mathfrak{U}$ being the monster model (a class-size model which embeds all set-size models) and $\mathfrak{M}$ being a set-size submodel. For a detailed introduction to the monster model, see \cite{tentziegler} and for more on invariant types see \cite{simon1}.

Strictly speaking, the study of Rosenthal compacta is not a part of $\cp$-theory, since it involves Baire class 1 functions rather than continuous functions.  However it is a natural extension of $\cp$-theory, and in his address at the conference in the proceedings of which this paper will appear, the second author suggested that topologists study $B_1(X)$ with the same vigour they have applied to $\cp(X)$.

An interesting question is where do Rosenthal compacta appear in the spectrum of special compacta studied by $\cp$-theorists.  Are they \textit{Eberlein}, \textit{Gul'ko}, \textit{Corson}, etc.?  So far, the only positive result we have found in the literature is that 

\begin{theorem}\textup{\sf\cite{debs}}
Gul'ko compacta of weight $\le 2^{\aleph_0}$ are Rosenthal.
\end{theorem}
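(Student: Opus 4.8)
The plan is to reduce the statement to a single concrete task and then attack that task with the structure theory of Gul'ko compacta. Recall that $K$ is Rosenthal exactly when $K$ embeds, pointwise, into $B_1(P)$ for some Polish $P$. Represent the Gul'ko compactum as a subset $K\subseteq\Sigma\mathbb{R}^{\Gamma}$ of a $\Sigma$-product, so that each $x\in K$ has countable support. Since the topology of pointwise convergence on $\Gamma$ depends only on $\Gamma$ as a \emph{set}, it suffices to embed the index set $\Gamma$ into a Polish space $P$ in such a way that each $x\in K$, extended by $0$ off the image of $\Gamma$, becomes a Baire class $1$ function on $P$: the evaluation map then sends $K$ homeomorphically onto a pointwise-compact subset of $B_1(P)$, so $K$ is Rosenthal. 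The weight hypothesis enters precisely here, and only here, to make this combinatorially possible: a Corson (hence Gul'ko) compactum of weight $\le 2^{\aleph_0}$ can be realized with $|\Gamma|\le 2^{\aleph_0}$, the largest cardinality a Polish space may carry. The bound cannot be dropped, since any Rosenthal compactum, sitting in $B_1$ of a Polish space, has cardinality and hence weight at most $2^{\aleph_0}$.

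The structural input I would use is Sokolov's characterization of Gul'ko compacta: there is a countable family $\{T_n : n<\omega\}$ of subsets of $\Gamma$ such that for every $x\in K$ and every $\gamma$ with $x(\gamma)\neq 0$ there is an $n$ with $\gamma\in T_n$ and $T_n\cap\mathrm{supp}(x)$ finite. First I would use this to code $\Gamma$ inside the Cantor set: fix an injection $\iota\colon\Gamma\to 2^{\omega}$ and set $\Psi(\gamma)=(\iota(\gamma),(\mathbf 1_{T_n}(\gamma))_{n})\in 2^{\omega}\times 2^{\omega}$, so that membership of $\gamma$ in each $T_n$ becomes a clopen condition on $\Psi(\gamma)$. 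Taking $P=\overline{\Psi(\Gamma)}$, the closure in the Cantor set, yields a compact metrizable --- hence Polish --- space in which each $T_n$ is the trace of a clopen set $U_n$. I would then define $e(x)\in\mathbb{R}^{P}$ by $e(x)=x\circ\Psi^{-1}$ on $\Psi(\Gamma)$ and $0$ elsewhere; the coordinate functionals recover the evaluations $x\mapsto x(\gamma)$, so $e$ is a continuous injection of the compact space $K$ into $\mathbb{R}^{P}$ and thus a homeomorphism onto its image.

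Everything then comes down to verifying that each $e(x)$ is of Baire class $1$ on $P$, which I expect to be the main obstacle. The natural tool is Baire's criterion: $e(x)$ is Baire class $1$ iff its restriction to every nonempty closed $F\subseteq P$ has a point of continuity. Writing $\mathrm{supp}(x)=\bigcup_n S_n(x)$, where $S_n(x)=T_n\cap\mathrm{supp}(x)$ whenever this is finite and $S_n(x)=\emptyset$ otherwise, each $S_n(x)$ is finite and, by Sokolov's condition, the union exhausts the support. The heart of the argument is to exploit the clopen sets $U_n$ together with this finiteness to locate, inside any closed $F$, a point at which only finitely many support coordinates accumulate, so that $e(x)|_F$ is continuous there.

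The genuinely delicate point, and the reason the result is nontrivial, is that Sokolov's decomposition is of \emph{``there exists $n$''} type rather than \emph{``for all $n$''}. Were the stronger form available --- a countable cover $\{T_n\}$ with $T_n\cap\mathrm{supp}(x)$ finite for \emph{every} $x$ and \emph{every} $n$ --- then a one-line rescaling of the $\gamma$-th coordinate by a factor tending to $0$ along the $T_n$ would place $K$ inside $c_{0}(\Gamma)$, making $K$ Eberlein, and the Baire class $1$ property would follow at once from the elementary fact that a finitely supported function on a compact metric space is a pointwise limit of continuous bump functions. The work in the Gul'ko case is therefore exactly to compensate, through the clopen coding of the $T_n$ and Baire's category criterion on closed subsets of $P$, for the fact that on a fixed $T_n$ the support of $x$ may be infinite. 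An alternative organization of the same idea would instead begin from the Lindel\"of $\Sigma$ property of $C_p(K)$, extract a compact cover of $C_p(K)$ indexed by a second countable space, and dualize it to functions on the associated Polish space; but the coding route above keeps the Baire class $1$ verification in the foreground, where the real difficulty lies.
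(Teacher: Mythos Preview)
The paper does not prove this theorem: it is quoted in the Postscript as a result from Debs's survey \cite{debs}, with no argument supplied. There is therefore no ``paper's own proof'' to compare your proposal against.

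As for the proposal itself, the strategy is sound and is essentially the standard one. Your reduction is correct: a Gul'ko compactum of weight $\le 2^{\aleph_0}$ sits in $\Sigma\mathbb{R}^{\Gamma}$ with $|\Gamma|\le 2^{\aleph_0}$, Sokolov's characterization supplies the countable family $\{T_n\}$, and your coding $\Psi\colon\Gamma\to 2^{\omega}\times 2^{\omega}$ makes each $T_n$ the trace of a clopen $U_n$ in the compact metric space $P=\overline{\Psi(\Gamma)}$. The map $e$ is indeed a homeomorphic embedding of $K$ into $\mathbb{R}^{P}$.

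The one place where you stop short is the Baire class~1 verification, and since you flag it as the crux it is worth recording that it really does close. Fix $x$ and a nonempty closed $F\subseteq P$. If $F$ misses every $U_n$ with $T_n\cap\mathrm{supp}(x)$ finite, then by Sokolov's covering condition $F\cap\Psi(\mathrm{supp}(x))=\emptyset$, so $e(x)|_F\equiv 0$. Otherwise pick such an $n$ with $F\cap U_n\neq\emptyset$; since $U_n$ is clopen, $F\cap U_n$ is relatively clopen in $F$, and on it $e(x)$ is nonzero only on the finite set $\Psi(T_n\cap\mathrm{supp}(x))$. Any point of $F\cap U_n$ outside that finite set (or any point of $F\cap U_n$ at all, if it is itself finite and hence discrete in $F$) is then a point of continuity of $e(x)|_F$. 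So Baire's criterion is satisfied and each $e(x)\in B_1(P)$, completing the argument you outlined.
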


The deep analysis of \cite{todorcevic2} provides many negative results distinguishing Rosenthal compacta from the other well-known compacta studied by $\cp$-theorists.  See \cite{debs}.  Mentioning properties of interest to those who study such compacta, we have for example: 

\begin{theorem}\textup{\sf\cite{todorcevic2}}
In every Rosenthal compactum, the set of $G_\delta$-points includes a dense metrizable subspace.
\end{theorem}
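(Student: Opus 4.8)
The plan is to represent the Rosenthal compactum $K$ as a pointwise-compact set of Baire class $1$ functions on a Polish space $X$, so that $K\subseteq B_1(X)\subseteq\real^X$ with the topology inherited from $\real^X$. The first move is the standard observation that in a compact Hausdorff space a point is a $G_\delta$-point if and only if it has countable character (a decreasing $G_\delta$ presentation can be shrunk to a neighbourhood base using regularity and compactness); so it suffices to produce a dense metrizable subspace consisting of points of first countability. I would then fix a countable dense set $\{d_n:n<\om\}\subseteq X$ and let $E\colon K\to\real^{\om}$ be the evaluation map $E(f)=\la f(d_n):n<\om\ra$. Since each coordinate $f\mapsto f(d_n)$ is continuous on $K$, the map $E$ is continuous and its image $L:=E(K)$ is a compact metrizable subspace of $\real^\om$.

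Next I would isolate the candidate set $D=\{f\in K: E^{-1}(E(f))=\{f\}\}$ of points lying over singleton fibres, and put $L_0=E(D)$. I claim $D$ does everything except possibly density. First, each $f\in D$ is a $G_\delta$-point of $K$: since $L$ is metrizable, $\{E(f)\}=\bigcap_n V_n$ for open $V_n\subseteq L$, whence $\{f\}=E^{-1}(E(f))=\bigcap_n E^{-1}(V_n)$ is a $G_\delta$ in $K$. Second, $E\restriction D$ is a homeomorphism onto $L_0$: it is a continuous bijection, and if a net $(f_\alpha)$ in $D$ satisfies $E(f_\alpha)\to E(f)$ with $f\in D$, then any subnet has, by compactness of $K$, a further subnet converging to some $h\in K$ with $E(h)=E(f)$; as $f$ has a singleton fibre, $h=f$, so $f_\alpha\to f$. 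Hence $D\cong L_0$ is metrizable and separable, and $D$ is contained in the set of $G_\delta$-points. The whole theorem is thereby reduced to the single assertion that $D$ is dense in $K$, i.e.\ that every nonempty open $V\subseteq K$ contains an $f$ which is the \emph{unique} member of $K$ agreeing with it on $\{d_n:n<\om\}$.

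This density statement is the crux, and it is exactly where the Baire class $1$ hypothesis becomes indispensable: for a general compact space such as $2^{\om_1}$ there are no $G_\delta$-points at all, so the soft argument above would be vacuous. Here I would first record what two functions $f\ne g$ in a common fibre look like: $h=f-g$ is Baire class $1$ and vanishes on the dense set $\{d_n\}$, so by Baire's characterisation its dense $G_\delta$ set of continuity points forces $h=0$ off a meagre set; thus members of a fibre agree outside a meagre subset of $X$. I would then bring in the Bourgain--Fremlin--Talagrand theory --- $K$ is Fr\'echet--Urysohn, angelic and sequentially compact --- and analyse the tree of finite conditions $\{(d_{n_0},\dots,d_{n_k};\,q_0,\dots,q_k)\}$ approximating fibres, aiming to show that densely many branches pin down a unique function. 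The plan is to push this through with a fusion/Baire-category argument inside $X$ reinforced by a parametrised Ramsey (oscillation-stabilisation) theorem.

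The metrizability and $G_\delta$-ness of $D$ are formal consequences of the evaluation map and need nothing beyond compactness; I expect the genuine difficulty to be the density of $D$, namely proving that the points whose local behaviour is already determined by a fixed countable dense set of coordinates are dense in $K$. This is the place where I anticipate the combinatorial machinery of \cite{todorcevic2} (trees, oscillation rank, and Ramsey-theoretic stabilisation) to be unavoidable, and where the elementary functional-analytic reductions above give out.
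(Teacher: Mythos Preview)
The paper does not prove this theorem. It is quoted in the Postscript purely as a citation of Todorcevic \cite{todorcevic2}, with no argument supplied; there is nothing in the paper against which to compare your approach.

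On your outline itself: the reductions you carry out are sound. The evaluation map $E$ over a fixed countable dense subset of $X$, the fact that points in $D$ (singleton fibres) are $G_\delta$ in $K$, and the argument that $E\restriction D$ is a homeomorphism onto a subspace of $\real^\om$ (hence $D$ is metrizable) are all correct and standard. You are also right that the entire substance of the theorem resides in the density of $D$, and that this is where the Baire class~1 hypothesis must do real work.

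However, your proposal stops exactly there. You name the expected ingredients --- Bourgain--Fremlin--Talagrand angelicity, trees of finite approximations, fusion, oscillation stabilisation --- but you do not actually assemble them into an argument that every nonempty open subset of $K$ meets $D$. What you have written is a correct framing and a plausible plan of attack, not a proof: the decisive step is entirely deferred to the machinery of \cite{todorcevic2}, which is precisely what the paper itself does by citing the result without proof.
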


\begin{theorem}\textup{\sf\cite{todorcevic2}}
Rosenthal compacta have $\sigma$-disjoint $\pi$-bases.
\end{theorem}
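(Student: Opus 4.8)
The plan is to exploit the structural result stated just above, namely that in any Rosenthal compactum $K$ the set of $G_\delta$-points contains a dense metrizable subspace $D$. Realize $K$ concretely as a set of Baire class $1$ functions on a Polish space $X$, compact in the topology of pointwise convergence, so that each evaluation $e_x\colon K\to\mathbb{R}$, $f\mapsto f(x)$, is continuous and the cylinders $[x_0\in I_0,\dots,x_{k-1}\in I_{k-1}]=\{f\in K : f(x_i)\in I_i \text{ for } i<k\}$ form a base for the topology of $K$. Since $D$ is metrizable, the Bing--Nagata--Smirnov theorem furnishes a $\sigma$-discrete base $\mathcal{B}=\bigcup_{n<\omega}\mathcal{B}_n$ of $D$, with each $\mathcal{B}_n$ discrete in $D$. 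The goal is to convert this into a $\sigma$-disjoint $\pi$-base for all of $K$.

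First I would lift: for each $B\in\mathcal{B}_n$, choose an open $G_B\subseteq K$ with $G_B\cap D=B$, using the $G_\delta$ property of the points of $D$ to take the $G_B$ to be small cylinder neighbourhoods of points of $B$. Because $D$ is dense in $K$, the family $\{G_B : B\in\mathcal{B}\}$ is automatically a $\pi$-base for $K$: any nonempty open $W\subseteq K$ meets $D$, hence contains some basic $B\subseteq W\cap D$, and a sufficiently small lift satisfies $G_B\subseteq W$. The families are indexed by the countably many levels $n$, so a countable decomposition is built in; what remains is genuine \emph{disjointness} of the lifts within each level.

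The hard part will be precisely this transfer of disjointness: a family discrete in the dense subspace $D$ need not consist of sets whose lifts are pairwise disjoint in $K$, since two members of $\mathcal{B}_n$ may have lifts that overlap off $D$. Closing this gap is where the fine theory of Baire class $1$ functions enters. I would use the Bourgain--Fremlin--Talagrand theorem that $K$ is Fr\'echet--Urysohn, together with the continuity of the evaluations $e_x$, to shrink the $G_B$ to cylinders separating distinct members of $\mathcal{B}_n$; the discreteness of $\mathcal{B}_n$ in $D$ should let one attach to each $B$ a finite set of separating coordinates and rational intervals whose cylinder avoids the closures of the other members. Coordinating these choices simultaneously---so that shrinking for disjointness does not destroy the $\pi$-base property established above---is the delicate combinatorial core, and it is exactly here that Todorcevic's parametrized Ramsey-theoretic analysis of Rosenthal compacta does the decisive work. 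An alternative, coordinate-first approach would instead fix a countable adequate set $S\subseteq X$, stratify the cylinders by the number of coordinates drawn from $S$ and by finite grids of rational intervals whose cells give manifestly disjoint cylinders, and then prove directly that evaluations at $S$ generate the topology of $K$; the obstacle merely migrates to establishing this sufficiency, which again rests on the same structural analysis.
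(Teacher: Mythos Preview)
The paper does not prove this theorem; it is simply quoted from \cite{todorcevic2} in the Postscript, so there is no proof in the paper to compare against. Your plan to derive it from the immediately preceding quoted result (existence of a dense metrizable subspace $D\subseteq K$) is the right reduction, and in fact the derivation is almost immediate once that result is granted---much easier than you suggest.

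The step you flag as the ``hard part'' is a non-issue. You worry that lifts $G_B$, $G_{B'}$ of disjoint $B,B'\in\mathcal{B}_n$ ``may have lifts that overlap off $D$''. They cannot: $G_B\cap G_{B'}$ is open in $K$, and $(G_B\cap G_{B'})\cap D=(G_B\cap D)\cap(G_{B'}\cap D)=B\cap B'=\emptyset$; since $D$ is dense, any nonempty open subset of $K$ meets $D$, so $G_B\cap G_{B'}=\emptyset$. Disjointness within each level transfers for free, and none of the machinery you propose (Bourgain--Fremlin--Talagrand, Fr\'echet--Urysohn, separating coordinates, parametrized Ramsey theory) is needed.

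If anything, the step you pass over---the $\pi$-base property---is where a line of care is required, since the lifts $G_B$ are fixed in advance and you cannot pick ``a sufficiently small lift'' depending on $W$. Regularity of $K$ handles this: given nonempty open $W$, choose open $W'$ with $\overline{W'}\subseteq W$; since $\mathcal{B}$ is a base for $D$, some $B\in\mathcal{B}$ satisfies $B\subseteq W'\cap D$; then, because $D$ is dense in the open set $G_B$, we get $G_B\subseteq\overline{G_B\cap D}=\overline{B}\subseteq\overline{W'}\subseteq W$. So the whole argument fits in a paragraph once the dense metrizable subspace is in hand.
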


\begin{corollary}
Rosenthal compacta satisfying the countable chain condition are separable.
\end{corollary}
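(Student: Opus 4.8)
The plan is to read this off directly from the immediately preceding theorem, that every Rosenthal compactum has a $\sigma$-disjoint $\pi$-base, so that the only real work is bookkeeping with the countable chain condition. Recall that a $\pi$-base of $X$ is a family $\mathcal{B}$ of nonempty open sets such that every nonempty open set contains a member of $\mathcal{B}$, and that $\mathcal{B}$ being $\sigma$-disjoint means $\mathcal{B}=\bigcup_{n<\omega}\mathcal{B}_n$ where each $\mathcal{B}_n$ is a cellular family (a collection of pairwise disjoint nonempty open sets). First I would invoke the preceding theorem to fix such a decomposition $\mathcal{B}=\bigcup_{n<\omega}\mathcal{B}_n$ for our ccc Rosenthal compactum $X$.

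The next step uses the hypothesis. Since $X$ satisfies the countable chain condition, every cellular family in $X$ is countable; in particular each $\mathcal{B}_n$ is countable. Hence $\mathcal{B}=\bigcup_{n<\omega}\mathcal{B}_n$ is a countable union of countable families and is therefore countable. Thus $X$ has a \emph{countable} $\pi$-base, and this is the crux: once the $\sigma$-disjoint $\pi$-base has been funneled through ccc, cellularity collapses each layer to a countable set.

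Finally I would extract a countable dense subset in the standard way. Choosing a point $x_B\in B$ for each $B\in\mathcal{B}$ yields a countable set $D=\{x_B:B\in\mathcal{B}\}$. Given any nonempty open $U\subseteq X$, the $\pi$-base property supplies some $B\in\mathcal{B}$ with $B\subseteq U$, so $x_B\in D\cap U$ and $D$ meets every nonempty open set. Hence $D$ is dense and $X$ is separable.

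As for the main obstacle: there essentially isn't one at this level, since all the depth has already been cited in Todorcevic's $\sigma$-disjoint $\pi$-base theorem. The corollary is a routine combination of that result with the elementary facts that ccc forces each cellular family to be countable and that a countable $\pi$-base yields a countable dense set. The only point requiring a moment's care is recognizing that the $\sigma$-disjointness is exactly what lets ccc act layer by layer; without the decomposition into cellular families, ccc alone would not bound the size of an arbitrary $\pi$-base.
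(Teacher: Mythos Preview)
Your proof is correct and is precisely the intended derivation: the paper states this as an immediate corollary of Todorcevic's theorem that Rosenthal compacta have $\sigma$-disjoint $\pi$-bases, without supplying further argument, and the standard bookkeeping you give (ccc makes each cellular layer countable, hence the $\pi$-base is countable, hence a choice of points is a countable dense set) is exactly what is being left to the reader.
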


In a series of mainly unpublished papers, Karim Khanaki has explored connections between model theory and Banach space theory. His latest, \cite{khanaki2}, covers ground familiar to us, including stability, definability, Grothendieck, etc., as well as topics we are just starting to research, such as NIP. However his work is confined to first order logic (continuous or discrete), i.e.~compact logics, and uses \cite{bourgainfremlintalagrand} and classical analysis rather than $\cp$-theory. We expect to be able to generalize many of his results to continuous $L_{\omega_1,\omega}$ and beyond. 


\end{document}